\newlength{\defbaselineskip}
\newcommand{\setlinespacing}[1]%
           {\setlength{\baselineskip}{#1 \defbaselineskip}}
\theoremstyle{plain}
\newtheorem{thm}{Theorem}[section]
\newtheorem{cor}[thm]{Corollary}
\newtheorem{lem}[thm]{Lemma}
\newtheorem{prop}[thm]{Proposition}
\theoremstyle{definition}
\newtheorem{defn}{Definition}[section]
\newtheorem{rmk}{Remark}[section]
\newcommand{\eps}{\varepsilon}
\DeclareMathOperator*{\esssup}{esssup}
\DeclareMathOperator*{\essinf}{essinf}
\newcommand{\cL}{\mathcal{L}}
\newcommand{\cB}{\mathcal{B}}
\newcommand{\cA}{\mathcal{A}}
\newcommand{\cS}{\mathcal{S}}
\newcommand{\cG}{\mathcal{G}}
\newcommand{\cU}{\mathcal{U}}
\newcommand{\bH}{\mathbb{H}}
\newcommand{\bP}{\mathbb{P}}
\newcommand{\bR}{\mathbb{R}}
\newcommand{\bN}{\mathbb{N}}
\newcommand{\sF}{\mathscr{F}}
\newcommand{\sP}{\mathscr{P}}
\makeatletter\@addtoreset{equation}{section} \makeatother
\begin{document}

\title{Stochastic Path-Dependent Hamilton-Jacobi-Bellman Equations
and Controlled Stochastic Differential Equations with Random Path-Dependent Coefficients\footnotemark[1] 
}

\author{Jinniao Qiu\footnotemark[2] 
}
\footnotetext[1]{This work was partially supported by the National Science and Engineering Research Council of Canada (NSERC) and by the start-up funds from the University of Calgary. }
\footnotetext[2]{Department of Mathematics \& Statistics, University of Calgary, 2500 University Drive NW, Calgary, AB T2N 1N4, Canada. \textit{E-mail}: \texttt{jinniao.qiu@ucalgary.ca} (J. Qiu)
.}
%
%

\maketitle

\begin{abstract}
In this paper, we propose and study the stochastic path-dependent Hamilton-Jacobi-Bellman (SPHJB) equation that arises naturally from the optimal stochastic control problem of stochastic differential equations with path-dependence and measurable randomness. Both the notions of viscosity solution and classical solution are proposed, and the value function of the optimal stochastic control problem is proved to be the viscosity solution to the associated SPHJB equation. A uniqueness result about viscosity solutions is also given for certain superparabolic cases, while the uniqueness of classical solution is addressed for general cases. In addition, an It\^o-Kunita-Wentzell-Krylov formula is proved for the compositions of random fields and stochastic differential equations in the path-dependent setting.
\end{abstract}

{\bf Mathematics Subject Classification (2010):}  49L20, 49L25, 93E20, 35D40, 60H15

{\bf Keywords:} stochastic path-dependent Hamilton-Jacobi-Bellman equation, stochastic optimal control, viscosity solution, backward stochastic partial differential equation

\section{Introduction}
Let $(\Omega,\sF,\{\sF_t\}_{t\geq0},\bP)$ be a complete filtered probability space on which the filtration $\{\sF_t\}_{t\geq0}$ satisfies the usual conditions and is generated by an $m$-dimensional Wiener process $W=\{W(t):t\in[0,\infty)\}$ together with all the $\bP$-null sets in $\sF$. The associated predictable $\sigma$-algebra  on $\Omega\times[0,T]$ is denoted by  $\sP$. Let $C([0,T];\bR^d)$ be the space of $\bR^d$-valued continuous functions on $[0,T]$.  For each $x\in C([0,T];\bR^d)$, denote by $x_t$ its restriction to time interval $[0,t]$ for each $t\in [0,T]$ and by $x(t)$ its value at time $t\in[0,T]$.

Consider the following stochastic optimal control problem
\begin{align}
\min_{\theta\in\cU}E\left[\int_0^T\!\! f(s,X_s,\theta(s))\,ds +G(X_T) \right] \label{Control-probm}
\end{align}
subject to
\begin{equation}\label{state-proces-contrl}
\left\{
\begin{split}
&dX(t)=\beta(t,X_t,\theta(t))\, dt + \sigma(t,X_t,\theta(t))\,dW(t),\,\,
\,t\geq 0; \\
& X_0=x_0\in\bR^d.
\end{split}
\right.
\end{equation}
Here and throughout this paper,  the number $T\in (0,\infty)$ denotes a fixed deterministic terminal time,  and  $\cU$ represents the set of all the $U$-valued and $\sF_t$-adapted processes with $U\subset \bR^{\bar m}$ ($\bar m\in \bN^+$) being a nonempty set. The state process $(X(t))_{t\in[0,T]}$, governed by the {\sl control} $\theta\in\cU$ may be written as $X^{r,x_r;\theta}(t)$ for $0\leq r\leq t\leq T$ to indicate the dependence of the state process on the control $\theta$, the initial time $r$ and initial path $x_r$.  

In this paper, we consider the non-Markovian case where the coefficients $\beta,\sigma, f$, and $G$ depend not only on time and control but also \textit{explicitly}  on $\omega \in\Omega$ and paths/history of the state process. 
Define the dynamic cost functional:\begin{align}
J(t,x_t;\theta)=E_{\sF_t}\left[\int_t^T\!\! f(s,X^{t,x_t;\theta}_s,\theta(s))\,ds +G(X^{t,x_t;\theta}_T) \right],\ \ t\in[0,T],
\label{eq-cost-funct}
\end{align}
where $E_{\sF_t}[\,\cdot\,]$ denotes the conditional expectation with respect to $\sF_t$. 
Then, the value function is given by
\begin{align}
V(t,x_t)=\essinf_{\theta\in\cU}J(t,x_t;\theta),\quad t\in[0,T].
\label{eq-value-func}
\end{align}
Due to the randomness and path-dependence of the coefficient(s), the value function $V(t,x_t)$ is generally a function of time $t$, path $x_t$, and $\omega\in\Omega$, and it proves to satisfy the following stochastic path-dependent Hamilton-Jacobi-Bellman (SPHJB) equation:
{\small
\begin{equation}\label{SHJB}
  \left\{\begin{array}{l}
  \begin{split}
  -\mathfrak{d}_t u(t,x_t)
- \mathbb{H}(t,x_t,\nabla u(t,x_t),\nabla^2u(t,x_t),\mathfrak{d}_{\omega}\nabla u(t,x_t) )&=0,\, (t,x)\in [0,T)\times  C([0,T];\bR^d);\\
    u(T,x)&= G(x), \quad x\in   C([0,T];\bR^d),
    \end{split}
  \end{array}\right.
\end{equation}
}
with
\begin{align*}
\mathbb{H}(t,x_t,p,A,B)
= \essinf_{v\in U} \bigg\{
\text{tr}\left(\frac{1}{2}\sigma \sigma'(t,x_t,v) A+\sigma(t,x_t,v) B\right)
       +\beta'(t,x_t,v)p +f(t,x_t,v)
                \bigg\},
\end{align*}
for $(p,A,B)\in \bR^d\times\bR^{d\times d}\times \bR^{m\times d}$.
 Here, $\nabla u(t,x_t)$ and $\nabla^2u(t,x_t)$  represent, respectively, the first and second order vertical derivative of $u(t,x_t)$ at the path $x_t$ (see Definition \ref{defn-VD}) and  the unknown \textit{adapted} random field $u$ is is confined to the following form:
  \begin{align}
u(t,x_t)=u(T,x_{t,T-t})-\int_{t}^T\mathfrak{d}_{s} u(s,x_{t,s-t})\,ds-\int_t^T\mathfrak{d}_{w}u(s,x_{t,s-t})\,dW(s), \label{SDE-u}
\end{align}
where $x_{t,r-t}(s)=x_t(s) {1}_{[0,t)}(s) + x_t(t){1}_{[t,r]}(s)$ for $0\leq t\leq s\leq r\leq T$.
The Doob-Meyer decomposition theorem indicates the uniqueness of the pair $(\mathfrak{d}_tu,\,\mathfrak{d}_{\omega}u)$ and thus the linear operators $\mathfrak{d}_t$ and $\mathfrak{d}_{\omega}$ are well defined in certain spaces (see Definition \ref{defn-testfunc}).  The pair $(\mathfrak{d}_tu,\,\mathfrak{d}_{\omega}u)$ may also be defined as two differential operators; see \cite[Section 5.2]{cont2013-founctional-aop} and \cite[Theorem 4.3]{Leao-etal-2018} for instance. 

When it holds that 
\begin{align}
\mathfrak{d}_{\omega}\nabla u (t,x_t)= \nabla \mathfrak{d}_{\omega} u(t,x_t), \quad \text{a.s. for all }
t\in[0,T) \text{ and }x\in C([0,T];\bR^d),
\label{der-eq}
\end{align}
putting $\psi=\mathfrak{d}_{w}u$ and comparing \eqref{SHJB} and \eqref{SDE-u}, we may rewrite the SPHJB equation \eqref{SHJB} as:
\begin{equation*}
  \left\{\begin{array}{l}
  \begin{split}
  -d u(t,x_t)
&= \mathbb{H}(t,x_t,\nabla u(t,x_t),\nabla^2u(t,x_t), \nabla \psi(t,x_t) )\,dt-\psi(t,x_t)\,dW(t);\\
    u(T,x)&= G(x), \quad x\in   C([0,T];\bR^d),
    \end{split}
  \end{array}\right.
\end{equation*}
which turns out to be a fully nonlinear backward stochastic partial differential equation (BSPDE), nevertheless, defined on path spaces. In fact, the relation \eqref{der-eq} holds true when all the coefficients are just state-dependent, i.e., with probability one, $(\beta,\sigma,f)(t,x_t,v)=(\beta,\sigma,f)(t,x(t),v)$, and $G(x_T)=G(x(T))$ for all $(t,x,v)\in [0,T]\times C([0,T];\bR^d)\times U$; see \cite{qiu2017viscosity} for instance. This sheds light on the connections between SPHJB equation \eqref{SHJB} and the BSPDEs; for related research on general BSPDEs, we refer to \cite{Bayraktar-Qiu_2017,cardaliaguet2015master,DuTangZhang-2013,EnglezosKaratzas09,Hu_Ma_Yong02,Peng_92} among many others. However, the exchangeability \eqref{der-eq}  generally does not hold due to the path-dependence (see Remark \ref{rmk-derivative} for examples), which makes SPHJB equation \eqref{SHJB} stand beyond the realm of BSPDEs.

When  all the coefficients $\beta,\sigma,f,$ and $G$ are \textit{deterministic continuous} path-dependent functions of time $t$, control $\theta$, and the paths of $(X,W)$,  the SPHJB equation \eqref{SHJB} falls into the range of so-called fully nonlinear path-dependent partial differential equations (PPDEs); refer to \cite{cont2013-founctional-aop,cosso2018path,ekren2016viscosity-1,ekren2016viscosity-2,ekren2016pseudo,peng2016bsde,ren2017comparison,tang2012path} to mention just a few. The viscosity solution theory of such PPDEs involves admirable path-dependent calculus. However, such PPDEs are deterministic and due to certain continuity requirements on paths of $(X,W)$, the viscosity solutions fail to incorporate the conventional $L^2$-theory of backward stochastic differential equations (BSDEs); for instance, the following trivial BSDE:
{\small
$$
Y(t)= \xi-\int_t^T Z(t)\,dW(t), \quad t\in[0,T]; \quad \xi\in L^{\infty}(\Omega,\sF_T;\bR),
$$
}
 by the martingale representation theorem, admits a unique $L^2$-solution pair $(Y,Z)$ with $Y(t,\omega)=E_{\sF_t}[\xi] (\omega)$, which does not require the continuity in $\omega$ of the solution $(Y,Z)(t,\omega)$ or the given terminal value $\xi\in L^{\infty}(\Omega,\sF_T;\bR)$. This observation motivates our considerations of measurable randomness and path-dependence and in fact gives a nontrivial meaning to the proposed  SPHJB equations and associated solution theory with different methods.  

In this paper, we propose the SPHJB equation \eqref{SHJB} for the optimal stochastic control problem \eqref{Control-probm}. Both classical solutions and viscosity solutions are discussed. The value function $V$ is verified to be a viscosity solution. A uniqueness result about viscosity solutions is addressed for the superparabolic cases with state-dependent $\sigma$, and as a byproduct, the uniqueness of classical solution is derived for general cases. In addition, an It\^o-Kunita-Wentzell-Krylov formula is proved for the compositions of random fields and stochastic differential equations in a path-dependent setting. 

Due to the  mixture of path-dependence and measurable randomness,  a viscosity solution theory for SPHJB equation \eqref{SHJB} is never a trivial task. On the one hand, due to the path-dependence, the solution $u(\omega,t,\cdot)$, for each $(\omega,t)\in\Omega\times [0,T]$, is path-wisely defined on the path space $C([0,T];\bR^d)$, and we have to deal with the lack of local compactness of the path space; instead of using  the nonlinear expectation techniques via second order BSDEs for deterministic PPDEs (see \cite{ekren2014viscosity,ekren2016viscosity-1} for instance), we define the random test functions by taking extreme points in certain locally compact subspaces (actually H\"older spaces) via conventional optimal stopping times. On the other hand,  as the involved coefficients are  just measurable w.r.t. $\omega$ on the sample space $(\Omega,\sF)$ without any specified topology,  it is not appropriate to define the viscosity solutions in a pointwise manner w.r.t. $\omega\in (\Omega,\sF)$; instead, we use a class of random fields of form \eqref{SDE-u} having sufficient spacial regularity as test functions; at each point $(\tau,\xi)$ ($\tau$ may be stopping time and $\xi$ a $C([0,\tau];\bR^d)$-valued $\sF_{\tau}$-measurable variable) the classes of test functions are also parameterized by the measurable sets $\Omega_{\tau}\in\sF_{\tau}$ and the type of compact subspaces. 

Finally, we compare the present work with the accompanying one \cite{qiu2020controlled}. In fact, when $\sigma(\omega,t,x_t,\theta(t))(\equiv \sigma(\omega,t))$ is path-invariant and uncontrolled in \eqref{state-proces-contrl}, we may take $\overline X(t)=X(t)-\xi(t)$ with $\xi(t)=\int_0^t\sigma(s)\,dW(s)$ for $t\in [0,T]$, and then the optimization \eqref{Control-probm}-\eqref{state-proces-contrl} is equivalent to the following one:
\begin{align}
\min_{\theta\in\cU}E\left[\int_0^T\!\! f(s,(\overline X+\xi)_s,\theta(s))\,ds +G((\overline X+\xi)_T) \right], \label{Control-probm-comment}
\end{align}
subject to
\begin{equation}\label{state-proces-contrl-comment}
\left\{
\begin{split}
&
\frac{d\overline X(t)}{dt}=\beta(t,(\overline X+\xi)_t,\theta(t)),\,\,
\,t\geq 0; \quad
\\
& 
\overline X_0=x_0\in\bR^d.
\end{split}
\right.
\end{equation}
The paper \cite{qiu2020controlled} is devoted to the control problem \eqref{Control-probm-comment}-\eqref{state-proces-contrl-comment} and  the existence and uniqueness of viscosity solution is addressed for the associated stochastic path-dependent Hamilton-Jacobi equation which, we note, 
%
is first-order. In contrast, our SPHJB \eqref{SHJB} is second-order, and this leads to the different methods and contents for the viscosity solution theory. For instance, to deal with the lacking of local compactness of the path space, subspaces of Lipschitz functions are used for treating viscosity solutions in \cite{qiu2020controlled}, while we use subspaces of H\"older functions herein because of the controlled stochastic integrals in the state process \eqref{state-proces-contrl}. Two Lipschitz functions over two successive time intervals with a joint point and an identical Lipschitz constant may be pieced together as a new Lipschitz function with the same Lipschitz constant, which, however, does not hold for H\"older functions. This together with the lacking of boundedness estimates of the second-order terms $\nabla^2 u$ and $\mathfrak{d}_{\omega}\nabla u$ for SPHJB \eqref{SHJB} gives rise to, particularly, the different discussions on the uniqueness of viscosity solutions.

The rest of this paper is organized as follows. In Section 2, we introduce some notations, show the standing assumption on the coefficients, and define both the viscosity (semi)solutions and classical (semi)solutions. In Section 3, a generalized It\^o-Kunita-Wentzell-Krylov formula is proved in a path-dependent setting and then it is applied to semisolutions. Section 4 is devoted to the existence of the viscosity solution, while in Section 5,  we discuss the uniqueness. 
%


\section{Preliminaries and definition of solutions}

\subsection{Preliminaries}
  For each $(k,r)\in\bN^+\times [0,T]$, denote by $\Lambda_r^0(\bR^k):=C([0,r];\bR^k)$ the space of all $\bR^k$-valued continuous functions on $[0,r]$ and by $\Lambda_r(\bR^k):=D([0,r];\bR^k)$  the space of $\bR^k$-valued c\`{a}dl\`ag (right continuous with left limits) functions on $[0,r]$. Set
$$
\Lambda^0(\bR^k)=\cup_{r\in[0,T]} \Lambda^0_r(\bR^k),\quad \Lambda(\bR^k)=\cup_{r\in[0,T]} \Lambda_r(\bR^k).
$$
For each path $X\in \Lambda_T(\bR^k)$ and $t\in[0,T]$, let $X_t=(X(s))_{0\leq s\leq t}$ be its restriction to time interval $[0,t]$,  and $X(t)$ its value at time $t$. When $k=d$, we write $\Lambda^0$, $\Lambda_r^0$, $\Lambda$, and $\Lambda_r$ for simplicity.

Both $\Lambda$ and $\Lambda^0$ are endowed with the following quasi-norm and metric: for each $(x_r,\bar x_t)\in \Lambda_r\times \Lambda_t$ or $(x_r,\bar x_t)\in  \Lambda^0_r\times \Lambda^0_t$ with $0\leq r\leq t\leq T$,
\begin{align*}
\|x_r\|_{0}&= \sup_{s\in[0,r]} |x_r(s)|;
\\
d_0(x_r,\bar x_t)&=\sqrt{|t-r|} + \sup_{s\in [0,t]} \left\{ |x_r(s)-\bar x_t(s)| 1_{[0,r)}(s)+ |x_r(r)-\bar x_t(s)| 1_{[r,t]}(s) \right\}.
\end{align*}
Then both $(\Lambda^0_t, \|\cdot\|_0)$ and $(\Lambda_t, \|\cdot\|_0)$ are Banach spaces for each $t\in[0,T]$, while $(\Lambda^0,  d_0)$ and $(\Lambda, d_0)$ are complete metric spaces. In fact, for each $t\in[0,T]$, $(\Lambda^0_t, \|\cdot\|_0)$ and $(\Lambda_t, \|\cdot\|_0)$ can be and (throughout this paper) will be thought of as the complete subspaces of $(\Lambda^0_T, \|\cdot\|_0)$ and $(\Lambda_T, \|\cdot\|_0)$, respectively; indeed, for each $x_t\in \Lambda_t$ ($x_t\in \Lambda_t^0$, respectively), we define, correspondingly, $\bar x\in\Lambda_T$ ($\bar x\in\Lambda^0_T$, respectively) with $\bar x (s)=x_t(t\wedge s)$ for $s\in[0,T]$. In addition, we shall use $\cB(\Lambda^0)$, $\cB(\Lambda)$, $\cB(\Lambda^0_t)$ and $\cB(\Lambda_t)$ to denote the corresponding Borel $\sigma$-algebras. By contrast, for each $\delta>0$ and $x_r\in\Lambda$, denote by $B_{\delta}(x_r)$ the set of paths $y_t\in\Lambda$ satisfying $d_0(x_r,y_t)\leq \delta$.

 For each $(x_t,h)\in \Lambda_t\times \bR^d$, its vertical perturbation is given as $x^h_t \in \Lambda_t$ with
 $x_t^h(s)= x_t(s) 1_{[0,t)}(s)+ \left(x_t(t)+h\right)1_{\{t\}}(s)$ for $s\in[0,t]$. 
 \begin{defn}\label{defn-VD}
 Given a functional $\phi$: $\Lambda \rightarrow \bR$ and a path $x_t\in \Lambda_t$, we say that $\phi$ is differentiable at $x_t$ if the function 
 $
 \phi(x_t^{\cdot}):\,
 \bR^d\rightarrow \bR,
\quad h\mapsto  \phi(x_t^h)
$
 is differentiable at $0$. The gradient  
 $$
 \nabla \phi(x_t):=(\nabla_1\phi(x_t),\dots, \nabla_d\phi(x_t))' 
 \quad\text{with}\quad 
 \nabla_i \phi(x_t):=\lim_{\delta\rightarrow 0}\frac{\phi(x_t^{\delta e_i}) -\phi(x_t)}{\delta}
 $$
  is called the vertical derivative of $\phi$ at $x_t$, where $\{e_i\}_{i=1,\dots,d}$ is the canonical basis in $\bR^d$.
 \end{defn}

Let $(\mathbb B,\|\cdot\|_{\mathbb B }) $ be a Banach space. 
If the $\mathbb B$-valued functional $\phi$ is continuous and bounded at all $x_t\in\Lambda$, $\phi$ is said to be continuous on $\Lambda$ and denoted by $\phi\in C(\Lambda; \mathbb B)$. 
Similarly, we define $C(\Lambda^0;\mathbb B)$, $C([0,T]\times\Lambda;\mathbb B)$, and $C([0,T]\times\Lambda^0;\mathbb B)$. In particular, we define $C((a,b] \times\Lambda;\mathbb B)= \cap_{\delta\in (0,b-a)} C([a+\delta,b]\times\Lambda;\mathbb B)
$ as usual for $0\leq a<b$, and so is it for $C((a,b)\times\Lambda;\mathbb B)$.

 For each $t\in[0,T]$, let $L^0(\Omega\times\Lambda_t,\sF_t\otimes\cB(\Lambda_t);\mathbb B)$ be the space of $\mathbb B$-valued $\sF_t\otimes\cB(\Lambda_t)$-measurable random variables. The measurable function
 $$
 u: \quad (\Omega\times[0,T]\times \Lambda,\, \sF\otimes\cB([0,T])\otimes \cB(\Lambda)  )   \rightarrow (\mathbb B,\,\cB(\mathbb B)),
 $$
   is said to be \textit{adapted} if for any time $t\in [0,T]$, $u$ is $\sF_t\otimes \cB(\Lambda_t)$-measurable.  
 For $p\in[1,\infty]$, denote by $\cS ^p (\Lambda; {\mathbb B })$ the set of all the adapted functions $u$: $\Omega\times[0,T]\times \Lambda\rightarrow \mathbb B$ such
 that for almost all $\omega\in\Omega$, $u$ is valued in $C([0,T]\times\Lambda;\mathbb B)$
 and
{\small $$\| u\|_{\cS ^p(\Lambda; {\mathbb B })}:= \left\| 
\sup_{(t,x_t)\in [0,T]\times \Lambda_t} \|u(t,x_t)\|_{\mathbb B } \right\|_{L^p(\Omega,\sF,\bP)}< \infty.$$
}
 For $p\in [1,\infty)$, denote by $\mathcal{L}^p(\Lambda; {\mathbb B })$ the set of all  the adapted functions $\mathcal \psi$: $\Omega\times[0,T]\times \Lambda\rightarrow \mathbb B$ such
 that for almost all $(\omega,t)\in\Omega\times[0,T]$, $\psi(t)$ is valued in $C(\Lambda_t;\mathbb B)$, and 
{\small
 $$
 \|\psi\|_{\mathcal{L}^p(\Lambda; {\mathbb B })}:=\left(E \left[ \int_0^T \sup_{x_t\in\Lambda_t}\|\psi(t,x_t)\|_{\mathbb B }^p\,dt\right] \right)^{1/p}< \infty.
 $$
 }
Both $(\cS^p(\Lambda; {\mathbb B }),\,\|\cdot\|_{\cS^p(\Lambda; {\mathbb B })})$ and $(\mathcal{L}^p(\Lambda; {\mathbb B }),\|\cdot\|_{\mathcal{L}^p(\Lambda; {\mathbb B })})$
are Banach spaces. Analogously, we define $L^0(\Omega\times\Lambda^0_t,\sF_t\otimes\cB(\Lambda^0_t);\mathbb B)$, $(\cS^p(\Lambda^0; {\mathbb B }),\,\|\cdot\|_{\cS^p(\Lambda^0; {\mathbb B })})$, and $(\mathcal{L}^p(\Lambda^0; {\mathbb B }),\|\cdot\|_{\mathcal{L}^p(\Lambda^0; {\mathbb B })})$.

 As usual, we use $C$ with or without a subscript to denote a constant whose value may vary from line to line. Throughout this paper, we use the following assumption.\\[3pt]
\noindent
   $({\mathcal A} 1)$ \it $G\in L^{\infty}(\Omega,\sF_T;C(\Lambda_T;\bR))$. For the coefficients $g=f,\beta^i,\sigma^{ij}$, $i=1,\dots,d$, $j=1,\dots,m$, \\
(i) 
 for each $v\in U$, $g(\cdot,\cdot,v)$ is adapted;\\
(ii) for almost all $(\omega,t)\in\Omega\times [0,T]$, $g(t,\cdot,\cdot)$ is continuous on $\Lambda_t\times U$;\\
(iii) there exists $L>0$ such that for all $x,\bar x\in \Lambda_T$, $t\in[0,T]$ and $\gamma_t,\bar\gamma_t\in \Lambda_t$, there hold
\begin{align*}
\esssup_{\omega\in\Omega} |G(x)|+ 
\esssup_{\omega\in\Omega} \sup_{v\in U} |g(t,\gamma_t, v) |
& \leq L ,\\
\esssup_{\omega\in\Omega} |G(x)-G(\bar x)|+ 
\esssup_{\omega\in\Omega} \sup_{v\in U} |g(t,\gamma_t, v)-g(t,\bar \gamma_t,v)|
& \leq L\left(\|x-\bar x\|_0 + \|\gamma_t-\bar\gamma_t\|_0   \right).
\end{align*}
\rm



\subsection{Definition of the solutions}



For $\delta \geq 0$ and $x_t\in \Lambda_t$, the horizontal extension $x_{t,\delta} \in \Lambda_{t+\delta}$  is given as $x_{t,\delta}(s)=x_t(s\wedge t)$ for $s\in[0,t+\delta]$.

\begin{defn}\label{defn-testfunc}
For $u\in \cS^{2} (\Lambda;\bR)$ with $\nabla u\in \cL^2(\Lambda;\bR)$, we say $u\in \mathscr C_{\sF}^2$ if 
there exist a constant $\alpha\in (0,1)$ and a finite partition $0=\underline t_0<\underline t_1<\ldots<\underline t_n=T$, for integer $n\geq 1$, such that 
\begin{enumerate}
\item [(i)] on each subinterval $[\underline t_j,\underline t_{j+1})$, $j=0,\ldots, n-1$, 
\begin{enumerate}
\item  there exists $(\mathfrak{d}_tu, \,\mathfrak{d}_{\omega}u)$ with 
$$
(\mathfrak{d}_t u, \,\mathfrak{d}_{\omega} u) 1_{\{[\underline t_j,\underline t_{j+1}-\eps]\}} \in \cL^2(\Lambda;\bR^d) \times  \cL^2(\Lambda;\bR^{ m}),\quad \forall\, \eps \in (0, \underline t_{j+1} -\underline t_j),
$$ 
%
satisfying for all $\underline t_j \leq r \leq \tau< \underline t_{j+1}$, and all $x_r\in \Lambda_r$,
\begin{align*}
 u(\tau,x_{r,\tau-r})=u( r,x_{r}) + \int_{r}^{\tau} 
 	\mathfrak{d}_s u(r,x_{r,s-r})\,ds 
+\int^{\tau}_r\mathfrak{d}_{\omega}  u(s,x_{r,s-r})\,dW(s),\text{ a.s.;}
\end{align*}
\item [(b)] $\nabla u$ is a.s. valued in $C((\underline t_j,\underline t_{j+1} )\times \Lambda;\bR^d)$, and there exists some  adapted $C((\underline t_j,\underline t_{j+1} )\times \Lambda;\bR^{m\times d})$-valued function denoted by $ \mathfrak{d}_{\omega} \nabla u $ such that for each subinterval $[\tilde t_j,\tilde t_{j+1}] \subset  ( \underline t_j,\underline t_{j+1}) $, $X\in \Lambda_T$, and any $M^l(t)=\int_{t\wedge \tilde t_j}^t g(s)\,dW^l(s)$ for some $g\in L^{\infty}(\Omega\times [0,T];\sP) $, $t\in [\tilde t_j,\tilde t_{j+1}]$, there holds
\begin{align}
\left\langle\nabla_i u(\cdot,X_{\cdot}),\, M^l(\cdot)  \right\rangle_{\tilde t_j}^{\tilde t_{j+1}}
=\int_{\tilde t_j}^{\tilde t_{j+1}} \left(\mathfrak{d}_{\omega} \nabla u\right)^{li} (t,X_t) g(t)\,dt, \quad\text{a.s.,}
\label{def-dwD} 
\end{align}
for $i=1,\ldots,d$, $l=1,\ldots,m$, where the covariation (bracket) of two stochastic processes is defined as usual:
{\small
\begin{align*} 
&\left\langle\nabla_i u(\cdot,X_{\cdot}),\, M^l(\cdot)  \right\rangle_{\tilde t_j}^{\tilde t_{j+1}}
\\
&=\lim_{|\Pi| \rightarrow 0^+} \sum_{k=0}^{N-1}  \left( \nabla_i u(\tau_{k+1},X_{\tau_k,\tau_{k+1}-\tau_k})- \nabla_i u(\tau_k,X_{\tau_k})\right) \int_{\tau_k}^{\tau_{k+1}} g(s)\,dW^l(s),\text{ in probability,}
\end{align*}
}
with $\Pi=\{\tilde t_j= \tau_0<\ldots<\tau_N=\tilde t_{j+1}\}$ being a subdivision of $[\tilde t_j,\tilde t_{j+1}]$ and $|\Pi|=\max_{1\leq k\leq N}|\tau_k-\tau_{k-1}|$;
\end{enumerate}
\item [(ii)] for each $0<\delta<\max_{0\leq j \leq n-1} |\underline t_{j+1}-\underline t_j|$, and  $g=\mathfrak{d}_tu,\,\nabla_i u, \,\nabla_{ij}u, \,(\mathfrak{d}_{\omega}u)^j, (\mathfrak{d}_{\omega}\nabla u)^{ji}$, $i=1,\dots,d$, $j=1,\dots,m$, there exists $L^{\delta}_{\alpha} \in (0,\infty) $ satisfying a.s. for all 
$t\in \cup_{0\leq j \leq n-1} (\underline t_j, \underline t_{j+1}-\delta]$ and all $x_t,y_t\in \Lambda_t$,
\begin{align*}
|\nabla u(t,x_t)| + |\nabla^2 u(t,x_t)|  +|\mathfrak{d}_{\omega}\nabla u(t,x_t)| 
&
	\leq L^{\delta}_{\alpha},
\\
|g(t,x_t)-g(t,y_t)| 
&
	\leq L^{\delta}_{\alpha} \left\|x_t-y_t\right\|_0^{\alpha}.
\end{align*}
\end{enumerate}
 We say the number $\alpha$ is the exponent associated to $u\in \mathscr C^2_{\sF}$ and $0=\underline t_0<\underline t_1<\ldots<\underline t_n=T$ the associated partition.\footnote{The exponent $\alpha$ is not put in the notation $\mathscr C^{2}_{\sF}$, as in many applications, there is no need to specify the exponent.}
\end{defn}

%

Doob-Meyer decomposition theorem gives the uniqueness of the pair $(\mathfrak{d}_tu,\,\mathfrak{d}_{\omega}u)$ at $(\omega, t,x_{s,t-s})$ for $0\leq s <t\leq T$, and with a standard denseness argument we may define the pair $(\mathfrak{d}_tu,\,\mathfrak{d}_{\omega}u)$ in $ \cL^2(\Lambda;\bR)\times  \cL^2(\Lambda;\bR^m)$ with 
$$
(\mathfrak{d}_tu,\,\mathfrak{d}_{\omega}u)(t,x_t)
=\lim_{s\rightarrow t^-}(\mathfrak{d}_tu,\,\mathfrak{d}_{\omega}u)(t,x_{s,t-s})=(\mathfrak{d}_tu,\,\mathfrak{d}_{\omega}u)(t,\lim_{s\rightarrow t^-} x_{s,t-s}),\quad \forall\, x_t\in \Lambda_t.
$$
 This makes sense of the two linear operators $\mathfrak{d}_t$ and $\mathfrak{d}_{\omega}$ which are consistent with the differential operators  in \cite[Section 5.2]{cont2013-founctional-aop} and \cite[Theorem 4.3]{Leao-etal-2018}. In particular, if the random function $u$ on $\Omega\times[0,T]\times \bR^d$ is regular enough (w.r.t. $\omega$), the term $\mathfrak{d}_{\omega}u$ is just the Malliavin derivative; if $u(t,x)$ is a deterministic function on the time-\textit{state} space $[0,T]\times\bR^d$, we may have $\mathfrak{d}_{\omega}u \equiv 0$ and $\mathfrak{d}_tu$ coincides with the classical derivative in time.  Noteworthily, the operators $\mathfrak{d}_t$ and $\mathfrak{d}_{\omega}$ are different from the path derivatives $(\partial_t,\,\partial_{\omega})$ via the functional It\^o formulas (see \cite{buckdahn2015pathwise} and \cite[Section 2.3]{ekren2016viscosity-1}); if $u(\omega,t,x_t)$ is smooth enough w.r.t. $(\omega,t)$ in the path space,  for each $x\in \Lambda^0_T$,  we have the relation 
$$\mathfrak{d}_tu(\omega,t,x_{s,t-s})=\left(\partial_t+\frac{1}{2}\partial^2_{\omega\omega}\right)u(\omega,t,x_{s,t-s}),\quad \mathfrak{d}_{\omega}u(\omega,t,x_{s,t-s})=  \partial_{\omega}u(\omega,t,x_{s,t-s}),$$
for $0\leq s<t<T$,     which may be seen from   \cite[Section 6]{ekren2016viscosity-1} and \cite{buckdahn2015pathwise}.  

By \eqref{def-dwD},  we define $\mathfrak{d}_{\omega} \nabla u$ in a way different from $\mathfrak{d}_{\omega} u$. Indeed, if there is $(\mathfrak{d}_t\nabla u, \,\mathfrak{D}_{\omega}\nabla u)$ with 
$$
(\mathfrak{d}_t\nabla u, \,\mathfrak{D}_{\omega}\nabla u) 1_{\{[\underline t_j,\, \underline t_{j+1} -\eps]\}} \in \cL^2(\Lambda;\bR^d) \times  \cL^2(\Lambda;\bR^{ m\times d}),\quad \forall\, \eps \in (0, \underline t_{j+1} -\underline t_j),
$$ 
satisfying for all $\underline t_j \leq r \leq \tau< \underline t_{t+1}$, and $x_r\in \Lambda_r$,
\begin{align}
 \nabla u(\tau,x_{r,\tau-r})=\nabla u( r,x_{r}) + \int_{r}^{\tau} 
 	\mathfrak{d}_s \nabla u(r,x_{r,s-r})\,ds 
+\int^{\tau}_r (\mathfrak{D}_{\omega}  \nabla u)'(s,x_{r,s-r})\,dW(s),\text{ a.s.,} \label{def-dwD-1}
\end{align}
one may easily check that $\mathfrak{d}_{\omega} \nabla u =   \mathfrak{D}_{\omega}  \nabla u$ which unveils the consistency. We do not adopt the method via \eqref{def-dwD-1} to define $\mathfrak{d}_{\omega} \nabla u$, basically because it requires the existence of $\mathfrak{d}_s \nabla u$ that is not necessary but narrows the test function space $ \mathscr C_{\sF}^2$.

\begin{rmk}\label{rmk-derivative}
It is worth noting that the operators $\mathfrak{d}_{\omega}$ and $\nabla $ are not exchangeable in many cases. For instance, taking $d=m=1$, $u(t,x_t)=\int_0^t\sin(x_t(s))\,dW(s)$, one has $\mathfrak{d}_{\omega} u(t,x_t)= \sin(x_t(t-)) $, $\nabla u(t,x_t) = 0$; however, $\nabla \mathfrak{d}_{\omega} u(t,x_t) =\cos(x_t(t-)) $, while $ \mathfrak{d}_{\omega} \nabla u(t,x_t) = 0 $.
\end{rmk}

For each stopping time $t\leq T$, let $\mathcal{T}^t$ be the set of stopping times $\tau$ valued in $[t,T]$ and $\mathcal{T}^t_+$ the subset of $\mathcal{T}^t$ such that $\tau>t$ for each $\tau\in \mathcal{T}^t_+$. For each $\tau\in\mathcal T^0$ and $\Omega_{\tau}\in\sF_{\tau}$, we denote by $L^0(\Omega_{\tau},\sF_{\tau};\Lambda^0_{\tau})$ the set of $\Lambda^0_{\tau}$-valued $\sF_{\tau}$-measurable functions.

Given a Banach space $(\mathbb B, \|\cdot\|_{\mathbb B})$, for each $\alpha\in (0,1)$ and  $0\leq t_0 <t_1\leq T$, denote by $C^{\alpha}([t_0,t_1];\mathbb B)$ the usual $\alpha$-H\"older space of $\mathbb B$-valued functions equipped with the semi-norm and norm:
\begin{align*}
\|\gamma\|_{t_0,t_1;\alpha}&=  \sup_{t_0\leq t<s\leq t_1} \frac{\|\gamma(s)-\gamma(t)\|_{\mathbb B}}{|s-t|^{\alpha}}, \quad \text{for }\gamma\in C^{\alpha}([t_0,t_1];\mathbb B),\\
\|\gamma\|_{C^{\alpha}([t_0,t_1])} &= \max_{t\in[t_0,t_1]} \|\gamma(t)\|_{\mathbb B} +\|\gamma\|_{t_0,t_1;\alpha},\quad \text{for }\gamma\in C^{\alpha}([t_0,t_1];\mathbb B).
\end{align*}
For each $k\in\bN^+$, $\alpha\in (0,1)$, $0\leq t\leq s\leq T$ and $\xi\in \Lambda_t$, define
\begin{align*}
\Lambda^{0,k,\alpha;\xi}_{t,s}=\bigg\{x\in \Lambda_s: \,& x(\tau)= 1_{[0,t]}(\tau)(\tau)  \xi(\tau\wedge t)+1_{(t,s]}(\tau) g(\tau), \,\,\tau\in[0,s],  \\
 &\text{ for some }g\in C^{\alpha}([t,s];\bR^d),\text{ with }g(t)=\xi(t),\,\,\|g\|_{t,s;\alpha}\leq k  \bigg\},
\end{align*}
and furthermore, we set $\Lambda^{0,k,\alpha}_{0,t}=\cup_{\xi\in\bR^d} \Lambda^{0,k,\alpha;\xi}_{0,t}$ for each $t\in [0,T]$. Then Arzel$\grave{\text{a}}$-Ascoli theorem indicates that each $\Lambda^{0,k,\alpha;\xi}_{t,s}$ is compact in $\Lambda_s$. Moreover, it is obvious that $\cup_{k\in\bN^+} \Lambda^{0,k,\alpha}_{0,s}$ is dense in $\Lambda_s^0$ for any $\alpha\in(0,1)$. In addition, by saying $(s,x)\rightarrow (t^+,\xi)$ for some $(t,\xi)\in [0,T)\times \Lambda_t$ we mean $(s,x)\rightarrow (t^+,\xi)$  with $s\in (t,T]$ and $x\in \cup_{k\in\bN^+} \Lambda_{t,s}^{0,k,\alpha;\xi}$ for some $\alpha \in (0,1)$.

We expect the test function space $ \mathscr C_{\sF}^2$ to include the classical solutions. Nevertheless, it is typical that the classical solutions may not be  differentiable in the time variable $t$ and $(\mathfrak{d}_tu,\mathfrak{d}_{\omega}u)$ may not be time-continuous but just measurable in $t$; see \cite{DuQiuTang10,Tang-Wei-2013} for the \text{state}-dependent cases, or one may even refer to the BSDEs that may be thought of as the trivial \text{stochastic} path-independent PDEs.  

\begin{defn}\label{classical-soltn}
We say that $u\in\mathscr C^2_{\sF}$ is a classical supersolution (resp. subsolution) of SPHJB equation \eqref{SHJB}, if     
$u(T,x)\geq (\text{resp. }\leq) G(x)$ for all $x\in\Lambda_T$ a.s. and  for  each $t\in[0,T)$ with $y\in\Lambda_t$,
{\small
\begin{align}
\text{ess}\liminf_{(s,x)\rightarrow (t^+,y)}
	E_{\sF_{t}} \left\{  -\mathfrak{d}_{s}u(s,x)-\bH(s,x,\nabla u(s,x),\nabla^2 u(s,x), \mathfrak{d}_{\omega}\nabla u(s,x)) \right\} \geq 0, \text{ a.s.,}
\\
\text{(resp. }
\text{ess}\limsup_{(s,x)\rightarrow (t^+,y)}
	E_{\sF_{t}} \left\{  -\mathfrak{d}_{s} u(s,x)-\bH(s,x,\nabla u(s,x),\nabla^2 u(s,x), \mathfrak{d}_{\omega}\nabla u(s,x) ) \right\} \leq 0\text{, a.s.).}
\end{align}
}
The function $u$ is a classical solution of SPHJB equation \eqref{SHJB} if it is both a classical subsolution and a classical supersolution.
\end{defn} 
Throughout this paper, we denote by $\overline{\mathscr V}$ the set of all the classical supersolutions of SPHJB equation \eqref{SHJB} and by $\underline{\mathscr V}$ the set of all the classical subsolutions. Set $\overline\phi(t,x)=Le^{L(T-t)}$, and $\underline\phi(t,x)=-Le^{-L(T-t)}$. Straightforward computations indicate that $\overline\phi\in \overline{\mathscr V}$ and $\underline\phi\in \underline{\mathscr V}$  under Assumption $(\cA 1)$. Therefore, we have the following assertion.
\begin{lem}\label{lem-classical-soltn}
Let Assumption $(\cA 1)$ hold. Neither $\overline{\mathscr V}$ nor $\underline{\mathscr V}$ is empty.
\end{lem}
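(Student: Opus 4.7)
The claim is essentially a direct verification that the explicitly proposed functions $\overline\phi(t,x)=Le^{L(T-t)}$ and $\underline\phi(t,x)=-Le^{L(T-t)}$ qualify as classical super- and sub-solutions, respectively. My plan is to carry out that verification in two stages: membership in the test-function space $\mathscr C^2_{\sF}$, and then the classical super/subsolution inequalities of Definition \ref{classical-soltn}.

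First, I would check $\overline\phi\in\mathscr C^2_{\sF}$ with the trivial partition $\underline t_0=0<\underline t_1=T$ and any exponent $\alpha\in(0,1)$. Since $\overline\phi$ is deterministic, constant in the path variable, and smooth in $t$, we have $\nabla\overline\phi\equiv 0$, $\nabla^2\overline\phi\equiv 0$, $\mathfrak{d}_{\omega}\overline\phi\equiv 0$, and (because all increments in \eqref{def-dwD} involve $\nabla\overline\phi\equiv 0$) also $\mathfrak{d}_{\omega}\nabla\overline\phi\equiv 0$. The only nonzero derivative is $\mathfrak{d}_t\overline\phi(t,x)=-L^2 e^{L(T-t)}$, and the identity in Definition \ref{defn-testfunc}(i)(a) reduces to the elementary equality $Le^{L(T-\tau)}=Le^{L(T-r)}-\int_r^\tau L^2 e^{L(T-s)}\,ds$. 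The H\"older bounds in (ii) hold trivially with $L_\alpha^\delta=0$ for all derivatives of positive order. The same remarks apply to $\underline\phi$.

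Next, I would verify the boundary inequality and the Hamiltonian inequality. For the boundary, $\overline\phi(T,x)=L\geq |G(x)|\geq G(x)$ a.s.\ by part (iii) of $(\mathcal A 1)$, and symmetrically $\underline\phi(T,x)=-L\leq G(x)$. For the interior inequality, substituting the vanishing derivatives into the Hamiltonian collapses it to
\begin{align*}
\mathbb H(t,x,0,0,0)=\essinf_{v\in U}f(t,x,v),
\end{align*}
and $(\mathcal A 1)$(iii) gives $|\mathbb H(t,x,0,0,0)|\le L$. Hence
\begin{align*}
-\mathfrak{d}_t\overline\phi(t,x)-\mathbb H(t,x,0,0,0)=L^2 e^{L(T-t)}-\essinf_{v\in U}f(t,x,v)\ge L^2 e^{L(T-t)}-L\ge 0,
\end{align*}
(using $Le^{L(T-t)}\ge 1$, i.e.\ the normalization $L\ge 1$ which one may assume without loss of generality by replacing $L$ with $\max(L,1)$). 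Since the quantity above is deterministic and continuous in $t$, the essential $\liminf$ as $(s,x)\to(t^+,y)$ of its $\sF_t$-conditional expectation agrees with the limit of the deterministic expression and stays nonnegative, giving $\overline\phi\in\overline{\mathscr V}$. The computation for $\underline\phi$ is entirely analogous with the inequality reversed.

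There is no real obstacle here: everything reduces to the fact that $\overline\phi$ and $\underline\phi$ are path-independent and deterministic, so both the test-function regularity and the Hamiltonian bound follow from the uniform bound $L$ on $G$ and $f$. The only mild point worth flagging in the write-up is ensuring the chosen constant $L$ is at least $1$, which costs no generality since enlarging $L$ only weakens the hypothesis.
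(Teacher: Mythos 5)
Your proof is correct and is exactly the verification the paper leaves implicit (the paper only names the barrier functions and asserts ``straightforward computations''), and you rightly flag that the normalization $L\ge 1$ is genuinely needed for the candidates $\pm Le^{L(T-t)}$, since at $t=T$ the inequality $L^2\ge L$ fails for $L<1$. One remark: your $\underline\phi(t,x)=-Le^{L(T-t)}$ differs from the paper's displayed $\underline\phi(t,x)=-Le^{-L(T-t)}$, and yours is the correct choice --- with the paper's exponent the subsolution requirement becomes $L^2e^{-L(T-t)}\le \essinf_{v\in U}f(t,x,v)$, which fails whenever $\essinf_{v\in U}f<0$, so the printed minus sign in the exponent is evidently a typo; also, strictly speaking the quantity $-\mathfrak{d}_t\overline\phi-\mathbb{H}(t,x,0,0,0)$ is random through $f$ rather than deterministic, but it is bounded below by the deterministic $L^2e^{L(T-t)}-L\ge 0$, which is all the essential-liminf argument needs.
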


We now introduce the notion of viscosity solutions. For each $(u,\tau)\in \cS^{2}(\Lambda^0;\bR)\times \mathcal T^0$, $\Omega_{\tau}\in\sF_{\tau}$ with $\mathbb P(\Omega_{\tau})>0$ and $\xi\in L^0(\Omega_{\tau},\sF_{\tau};\Lambda^0_{\tau})$\footnote{Each $\xi\in L^0(\Omega_{\tau},\sF_{\tau};\Lambda^0_{\tau})$ is thought of as  $\xi\in L^0(\Omega_{\tau},\sF_{\tau};\Lambda^0)$ satisfying $\xi(\omega)\in \Lambda_{\tau(\omega)}$ for almost all $\omega\in\Omega_{\tau}$.}, we define for each $(k,\alpha)\in \bN^+\times (0,1)$,
{\small
\begin{align*}
\underline{\mathcal{G}}u(\tau,\xi;\Omega_{\tau},k,\alpha):=\bigg\{
\phi\in\mathscr C^2_{\sF}:&
  \text{ there exists } \hat\tau_k \in  \mathcal T^{\tau}_+\text{ such that}\\
&(\phi-u)(\tau,\xi)1_{\Omega_{\tau}}=0=\essinf_{\bar\tau\in\mathcal T^{\tau}} E_{\sF_{\tau}}\left[\inf_{y\in \Lambda^{0,k,\alpha;\xi}_{\tau,\bar\tau}}
(\phi-u)(\bar\tau\wedge \hat{\tau}_k,y)
\right]1_{\Omega_{\tau}}  \text{ a.s.}
\bigg\},\\
\overline{\mathcal{G}}u(\tau,\xi;\Omega_{\tau},k,\alpha):=\bigg\{
\phi\in\mathscr C^2_{\sF}:
&
  \text{ there exists } \hat\tau_k \in  \mathcal T^{\tau}_+\text{ such that}\\
&(\phi-u)(\tau,\xi)1_{\Omega_{\tau}}=0=\esssup_{\bar\tau\in\mathcal T^{\tau}} E_{\sF_{\tau}}\left[\sup_{y\in \Lambda^{0,k,\alpha;\xi}_{\tau,\bar\tau} }
(\phi-u)(\bar\tau\wedge \hat{\tau}_k,y)
\right]1_{\Omega_{\tau}}  \text{ a.s.}
\bigg\}.
\end{align*}
}
Obviously, if $\underline{\mathcal{G}}u(\tau,\xi;\Omega_{\tau},k,\alpha)$ or $\overline{\mathcal{G}}u(\tau,\xi;\Omega_{\tau},k,\alpha)$ is nonempty, there holds $0\leq\tau <T$ on $\Omega_{\tau}$.  


\begin{defn}\label{defn-viscosity}
We say that $u\in \cS^2 (\Lambda^0;\bR)$ is a viscosity subsolution (resp. supersolution) of SPHJB equation \eqref{SHJB}, if $u(T,x)\leq (\text{ resp. }\geq) G(x)$ for all $x\in\Lambda^0_T$ a.s., and for each $(K_0,\alpha_0)\in \bN^+\times (0,1)$, there exists $(k,\alpha)\in\bN^+\times (0,1)$ with $k\geq K_0$ and $\alpha\leq \alpha_0$, such that for any $\tau\in  \mathcal T^0$, $\Omega_{\tau}\in\sF_{\tau}$ with $\mathbb P(\Omega_{\tau})>0$ and $\xi\in L^0(\Omega_{\tau},\sF_{\tau};\Lambda^0_{\tau})$ and any $\phi\in \underline{\cG}u(\tau,\xi;\Omega_{\tau},k,\alpha)$ (resp. $\phi\in \overline{\cG}u(\tau,\xi;\Omega_{\tau},k,\alpha)$), there holds
{\small
\begin{align}
&\text{ess}\liminf_{(s,x)\rightarrow (\tau^+,\xi)}
	E_{\sF_{\tau}} \left\{ -\mathfrak{d}_{s}\phi(s,x)-\bH(s,x,\nabla \phi(s,x),\nabla^2\phi(s,x),\mathfrak{d}_{\omega}\nabla\phi(s,x)) \right\}  \leq\ \,0, 
\label{defn-vis-sub}
\end{align}
}
 for almost all $ \omega\in\Omega_{\tau}$
 {\small
\begin{align}
\text{(resp.} \quad &\text{ess}\!\!\limsup_{(s,x)\rightarrow (\tau^+,\xi)} 
	\!\!E_{\sF_{\tau}} 
		\left\{ -\mathfrak{d}_{s}\phi(s,x)-\bH(s,x,\nabla \phi(s,x),\nabla^2\phi(s,x),\mathfrak{d}_{\omega}\nabla\phi(s,x)) \right\}  \geq\ \,0,   
		\label{defn-vis-sup}
\end{align}
}
 for almost all $ \omega\in\Omega_{\tau}$).
 
The function $u$ is a viscosity solution of SPHJB equation \eqref{SHJB} if it is both a viscosity subsolution and a viscosity supersolution.
\end{defn}

To make sense of the involved vertical derivatives, a classical (semi)solution is defined on path space $\Lambda$, while the viscosity solution is just defined on $\Lambda^0$. 
Throughout this paper, we define for each $\phi\in\mathscr C^2_{\sF}$, $v\in U$, $t\in [0,T]$, and $x_t\in \Lambda_t$,
\begin{align*}
\mathscr L^{v}\phi(t,x_t)=\,
&
 \mathfrak{d}_t \phi (t,x_t)      +\beta'(t,x_t,v)\nabla\phi(t,x_t) 
 \\
 &+  \text{tr}\left\{\frac{1}{2}
 \sigma(t,x_t,v)\sigma'(t,x_t,v)\nabla ^2\phi(t,x_t) + \sigma(t,x_t,v)\mathfrak{d}_{\omega} \nabla \phi(t,x_t)
 \right\} .
\end{align*}

\begin{rmk}\label{rmk-bH}
In view of the assumption $ (\cA 1)$, for each $\phi\in \mathscr C_{\sF}^2$,  there exists a finite partition $0=\underline t_0<\underline t_1<\ldots<\underline t_n=T$,  such that  for any $0<\delta<\max_{0\leq j \leq n-1} |\underline t_{j+1}-\underline t_j|$,  there exist an $\sF_t$-adapted process $\zeta^{\phi}$ and a constant $L^{\phi}_{\alpha} \in (0,\infty) $ satisfying that a.s. for all $t\in \cup_{0\leq j \leq n-1} (\underline t_j, \underline t_{j+1}-\delta]$ and all $x_t,\bar x_t\in \Lambda_t$,  
 we have
\begin{align*}
 &\Big|  -\mathfrak{d}_{t}\phi(t,x_t)-\bH(t,x_t,\nabla \phi(t,x_t),\nabla^2\phi(t,x_t),\mathfrak{d}_{\omega}\nabla\phi(t,x_t)) 
  \Big| 
   \leq 
  \sup_{v\in U}
  \Big|  \mathscr L^{v}\phi(t,x_t) + f(t,x_t,v) \Big|
  \leq \zeta^{\phi}_t,
\end{align*}
and
\begin{align}
 & \Big| 
  \left\{ -\mathfrak{d}_{t}\phi-\bH(\nabla \phi,\nabla^2\phi, \mathfrak{d}_{\omega}\nabla\phi ) \right\}(t,x_t)
  -\left\{ -\mathfrak{d}_{t}\phi-\bH(\nabla \phi,\nabla^2\phi, \mathfrak{d}_{\omega}\nabla\phi ) \right\}(t,\bar x_t)\Big|\nonumber\\
  &
  \leq 
  \sup_{v\in U}
  \Big| 
   \left( \mathscr L^{v}\phi(t,x_t) +f(t,x_t,v) \right)
  -\left( \mathscr L^{v}\phi(t,\bar x_t)  + f(t,\bar x_t,v)  \right)\Big|
  \nonumber
  \\
  &
  \leq L^{\phi}_{\alpha}  \left(\|x_t-\bar x_t\|^{\alpha}_0 +\|x_t-\bar x_t\| _0  \right),
 \label{R-Lip-const}
\end{align}
where $\zeta^{\phi} \in L^2(\Omega\times [\underline t_j, \underline t_{j+1}-\delta])$ for $j=0,\ldots,n-1$, and  $\alpha$ is the exponent associated to $\phi \in \mathscr C^2_{\sF}$.
Therefore, the conditional expectations in \eqref{defn-vis-sub} and \eqref{defn-vis-sup} are well-defined a.e..
\end{rmk}

\section{Generalized It\^o-Kunita-Wentzell-Krylov formula and its applications to semisolutions}
First, under assumption $(\cA1)$, the following assertions may be obtained via standard computations; refer to \cite{da2014stochastic,mohammed1984stochastic,yong-zhou1999stochastic} for instance.
\begin{lem}\label{lem-SDE}
Let $(\cA1)$ hold. Given $\theta\in\cU$, for the strong solution of SDE \eqref{state-proces-contrl},  for all $p>0$, there exists $K>0$  such that, for all $0\leq r \leq t\leq s \leq T$,  and $\xi\in L^0(\Omega,\sF_r;\Lambda_r)$,
 \\[3pt]
(i)   the two processes $\left(X_s^{r,\xi;\theta}\right)_{t\leq s \leq T}$ and $\left(X^{t,X_t^{r,\xi;\theta};\theta}_s\right)_{t\leq s\leq T}$ are indistinguishable;\\[2pt]
(ii)  $E_{\sF_r} \left[ \max_{r\leq l \leq T} \left\|X^{r,\xi;\theta}_l\right\|_0^p\right] \leq  K \left(1+ \|\xi\|_0^p\right)$ a.s.;\footnote{Here, denoting by $x_r$ a path in $\Lambda_r$, we set $E_{\sF_r} \left[ \max_{r\leq l \leq T} \left\|X^{r,\xi;\theta}_l\right\|_0^p\right]= E_{\sF_r} \left[ \max_{r\leq l \leq T} \left\|X^{r,x_r;\theta}_l\right\|_0^p\right] \Big|_{x_r=\xi} $; the conditional expectation in assertion (iv) is defined analogously. }
\\[2pt]
(iii) $ E_{\sF_r} \left[  \left| d_0( X^{r,\xi;\theta}_s,\,X^{r,\xi;\theta}_t  ) \right|^p \right]  
				\leq  K \left(|s-t|^p+  |s-t|^{p/2} \right)$ a.s.;\\[2pt]
(iv) given another $\hat{\xi}\in   L^0(\Omega,\sF_r;\Lambda_r)$, 
 $E_{\sF_r} \left[ \max_{r\leq l \leq T} \left\|X^{r,\hat\xi;\theta}_l-X^{r,\xi;\theta}_l\right\|_0 ^{p+1} \right]
			\leq  K  \|\xi-\hat\xi\|_0^{p+1}, \text{ a.s.;}
$\\
(v) the constant $ K$ depends only on $L,\,p,$ and $T$.
\end{lem}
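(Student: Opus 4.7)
The plan is to prove each assertion via standard SDE estimates adapted to the path-dependent setting; the basic tools are the Burkholder--Davis--Gundy (BDG) inequality, Gronwall's lemma, and pathwise uniqueness of strong solutions, all of which go through because $(\cA 1)$ provides both a uniform bound and a Lipschitz modulus for the coefficients in the path variable.

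For (i), I first observe that $(\cA 1)$ guarantees existence and pathwise uniqueness of strong solutions to \eqref{state-proces-contrl} starting from any $\sF_r$-measurable initial path. Both $(X^{r,\xi;\theta}_s)_{t\le s\le T}$ and $(X^{t,X^{r,\xi;\theta}_t;\theta}_s)_{t\le s\le T}$ are strong solutions on $[t,T]$ with the same $\sF_t$-measurable initial path $X^{r,\xi;\theta}_t$, so uniqueness gives indistinguishability. For (ii), the uniform bound $|\beta|\vee|\sigma|\le L$ from $(\cA 1)$ means Gronwall is not even needed: applying BDG to the stochastic integral in integral form yields
\[
E_{\sF_r}\!\left[\max_{r\le l\le T}\|X^{r,\xi;\theta}_l\|_0^p\right]\le C_p\left(\|\xi\|_0^p+L^p(T-r)^p+L^p(T-r)^{p/2}\right),
\]
with $C_p$ the BDG constant, giving (ii) with $K=K(L,p,T)$.

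For (iii), the key point is that for $r\le t\le s\le T$ the paths $X^{r,\xi;\theta}_t$ and $X^{r,\xi;\theta}_s$ coincide on $[0,t]$, so by the definition of $d_0$,
\[
d_0(X^{r,\xi;\theta}_t,X^{r,\xi;\theta}_s)=\sqrt{s-t}+\sup_{l\in[t,s]}|X^{r,\xi;\theta}(t)-X^{r,\xi;\theta}(l)|.
\]
Applying BDG and the uniform bound on $\beta,\sigma$ to the supremum of increments gives the bound $C_p L^p(|s-t|^p+|s-t|^{p/2})$, which combined with the $\sqrt{s-t}$ term yields (iii). For (iv), subtracting the SDEs for $X^{r,\hat\xi;\theta}$ and $X^{r,\xi;\theta}$, applying BDG to the martingale part, and using the Lipschitz continuity of $\beta,\sigma$ in the path variable from $(\cA 1)$, I obtain, for $q:=p+1$,
\[
E_{\sF_r}\!\left[\sup_{r\le u\le l}\|X^{r,\hat\xi;\theta}_u-X^{r,\xi;\theta}_u\|_0^{q}\right]\le C_q\|\xi-\hat\xi\|_0^{q}+C_qL^{q}\int_r^l E_{\sF_r}\!\left[\sup_{r\le v\le u}\|X^{r,\hat\xi;\theta}_v-X^{r,\xi;\theta}_v\|_0^{q}\right]du,
\]
and Gronwall closes the estimate. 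The shift from $p$ to $p+1$ ensures $q\ge1$, which is what BDG and the Gronwall step require; for $p\in(0,1)$, one can either first prove the case $q=2$ and use Jensen, or absorb the fractional exponent directly into $C_q$. Assertion (v) is immediate from bookkeeping, since every constant above depends only on $L$, $p$, and $T$.

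The main obstacle is cosmetic rather than substantive: one must carefully translate pointwise Lipschitz bounds in the path argument into sup-norm bounds on the restrictions $X_l$ so that Gronwall can be applied to the appropriate monotone quantity $\sup_{r\le u\le l}\|\cdot\|_0^{q}$, and verify that the resulting conditional-expectation estimates hold a.s.\ uniformly in $\theta\in\cU$. Because all these steps are standard, I expect the proof to read essentially as a reference-backed outline rather than a detailed calculation, which is why the paper defers to \cite{da2014stochastic,mohammed1984stochastic,yong-zhou1999stochastic}.
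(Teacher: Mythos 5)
Your proposal is correct and takes essentially the same (standard) route that the paper itself relies on: the paper offers no proof of this lemma, stating only that the assertions ``may be obtained via standard computations'' and deferring to \cite{da2014stochastic,mohammed1984stochastic,yong-zhou1999stochastic}, and your argument --- pathwise uniqueness for (i), BDG with the uniform bound $|\beta|\vee|\sigma|\le L$ for (ii)--(iii), and BDG plus the Lipschitz modulus and Gronwall for (iv) --- is precisely that computation. The two points that genuinely need care, namely the explicit form of $d_0(X_t^{r,\xi;\theta},X_s^{r,\xi;\theta})$ in (iii) and the treatment of the exponent $q=p+1\in(1,2)$ in (iv), are both handled correctly.
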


To investigate the H\"older continuity of the paths, we recall a general version of Kolmogorov criterion by Revuz and Yor \cite[Theorem (2.1), Page 26--28]{Revuz_Yao_2013continuous}.
\begin{lem}\label{lem-alpha}
Given a Banach space $(\mathbb B, \,\|\cdot\|_{\mathbb B})$, let $(Y(t))_{t\in[0,T]}$ be a $\mathbb B$-valued stochastic process for which there are three strictly positive constants $q,\lambda$,  and $\delta$ such that 
$$
E\left[  \left\|  Y(t)-Y(s)   \right\|_{\mathbb B}^{q}\right] \leq \lambda |t-s|^{1+\delta}, \quad \text{for all }0\leq t\leq s\leq T.
$$
Then, for each $\alpha\in \left(0,\frac{\delta}{q} \right)$ the process $Y$ admits an $\alpha$-H\"older continuous modification (denoted by itself) such that 
\begin{align}
E\left[
\left\|  Y \right\|_{0,T;\alpha} ^{q} 
\right] 
\leq C, \label{holder-est}
\end{align}
where the constant $C$ depends on $\lambda,\alpha,q,\delta$, and $T$.
\end{lem}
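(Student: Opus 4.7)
The plan is to adapt the classical Kolmogorov--Chentsov continuity argument to the Banach-space-valued setting, since the hypothesis and conclusion depend only on the norm $\|\cdot\|_{\mathbb B}$ and on completeness of $\mathbb B$. Fix $\alpha\in(0,\delta/q)$ and introduce the dyadic grid $\mathcal D_n=\{kT/2^n:\,0\le k\le 2^n\}$, writing $t_k^n:=kT/2^n$. By Markov's inequality applied to the hypothesis,
\begin{align*}
\bP\Bigl(\max_{0\le k<2^n}\|Y(t_{k+1}^n)-Y(t_k^n)\|_{\mathbb B}>2^{-n\alpha}\Bigr)
&\le 2^{n}\cdot 2^{nq\alpha}\,\lambda\,(T/2^{n})^{1+\delta}\\
&=\lambda T^{1+\delta}\,2^{-n(\delta-q\alpha)},
\end{align*}
which is summable in $n$ because $q\alpha<\delta$. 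Borel--Cantelli then supplies an almost sure event $\Omega^{*}$ on which, for all $n$ large enough, every consecutive dyadic increment at scale $n$ is bounded by $2^{-n\alpha}$ in $\|\cdot\|_{\mathbb B}$.

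On $\Omega^{*}$ a standard chaining argument gives $\|Y(t)-Y(s)\|_{\mathbb B}\le C(\omega)|t-s|^{\alpha}$ uniformly over dyadic $s<t$ in $[0,T]$. Completeness of $\mathbb B$ lets me extend the restriction of $Y$ to $\bigcup_{n}\mathcal D_{n}$ to an $\alpha$-Hölder continuous process $\widetilde Y$ on all of $[0,T]$; continuity in probability of $Y$ (a direct consequence of the hypothesis applied with $s\to t$) then forces $\widetilde Y(t)=Y(t)$ almost surely for each fixed $t$, so $\widetilde Y$ is the desired modification.

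For the quantitative estimate \eqref{holder-est} I would invoke the Garsia--Rodemich--Rumsey inequality with Young function $\Psi(u)=u^{q}$ and majorant $p(u)=u^{\alpha+2/q}$, which yields
$$\|Y\|_{0,T;\alpha}^{q}\le C\int_{0}^{T}\!\!\int_{0}^{T}\frac{\|Y(u)-Y(v)\|_{\mathbb B}^{q}}{|u-v|^{q\alpha+2}}\,du\,dv.$$
Taking expectation and applying the hypothesis bounds the right-hand side by $C\lambda\iint_{[0,T]^{2}}|u-v|^{\delta-q\alpha-1}\,du\,dv$, which is finite precisely because $q\alpha<\delta$, and the resulting constant depends only on $\lambda,\alpha,q,\delta,T$. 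The only real ``obstacle'' here is bookkeeping in the chaining/GRR estimate; because no feature specific to $\bR$ is used beyond the triangle inequality and completeness of $\mathbb B$, the scalar proof transfers verbatim, and as the authors indicate one may simply cite \cite[Theorem~(2.1)]{Revuz_Yao_2013continuous}.
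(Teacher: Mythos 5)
The paper does not actually prove this lemma; it is recalled verbatim from Revuz--Yor \cite[Theorem (2.1), pp.~26--28]{Revuz_Yao_2013continuous}, so there is no internal proof to compare against. Your argument is a correct reconstruction of that result: the Markov/Borel--Cantelli/chaining step for the existence of the $\alpha$-H\"older modification is exactly the Revuz--Yor proof (none of it uses more than the triangle inequality and completeness of $\mathbb B$, so the vector-valued transfer is indeed immediate), and your exponent bookkeeping, $2^{n}\cdot 2^{nq\alpha}\lambda(T/2^{n})^{1+\delta}=\lambda T^{1+\delta}2^{-n(\delta-q\alpha)}$, is right. The one place you genuinely deviate is the moment estimate \eqref{holder-est}: Revuz--Yor obtain it directly from the chaining decomposition, via $\|Y\|_{0,T;\alpha}\leq C\sum_{n}2^{n\alpha}K_n$ with $K_n=\max_k\|Y(t^n_{k+1})-Y(t^n_k)\|_{\mathbb B}$ and $\|K_n\|_{L^q}\leq(\lambda T^{1+\delta})^{1/q}2^{-n\delta/q}$, summable precisely when $\alpha<\delta/q$; you instead invoke Garsia--Rodemich--Rumsey with $\Psi(u)=u^{q}$, $p(u)=u^{\alpha+2/q}$, which after the computation $8\int_0^{r}\Psi^{-1}(4B/u^2)\,dp(u)=C_{\alpha,q}B^{1/q}r^{\alpha}$ and Fubini gives the same conclusion under the same condition $q\alpha<\delta$. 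Both routes are valid (GRR requires only that $\Psi$ be increasing, so $0<q<1$ is not a problem, and the metric-space version applies to $\mathbb B$-valued continuous paths); GRR is arguably cleaner for the quantitative constant, while the direct chaining bound avoids introducing a second tool. No gap.
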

\begin{rmk}\label{rmk-holder-contin}
The controlled SDE in Lemma \ref{lem-SDE} may be considered in any finite interval $[0,N]$ for $N>0$ and the time $T$ may also be general $T>0$.  By assertion (iii) of Lemma \ref{lem-SDE}, the arbitrariness of $p$ therein and Lemma \ref{lem-alpha} imply that for all $\alpha\in(0,\frac{1}{2})$, $\tau >0$, and $q>1$, there exists constant $C>0$ such that for all  $\theta\in \cU$, $\xi\in L^0(\Omega,\sF_r;\Lambda_r)$, and $r\geq 0$,  there holds  $E_{\sF_r}\left[\left\|  X^{r,\xi;\theta} \right\|_{r,r+\tau;\alpha} ^{q} 
\right]<C$ a.s.,  with $C$ depending only on $L, \tau,q$, and $\alpha$. Further, for each $\theta\in\cU$, recalling that for each $0<\alpha<\alpha'<\frac{1}{2}$, and $0\leq r<t\leq T$,  there holds
$$
\left\|  X^{r,\xi;\theta} \right\|_{r,t;\alpha} \leq \left\|  X^{r,\xi;\theta} \right\|_{r,T;\alpha'} |t-r|^{\alpha'-\alpha}, \quad \text{a.s.,}
$$
we have the stopping times 
\begin{align*}
\tau^{\theta}_{k,\alpha}:=\inf\{s> r; \|X^{r,\xi;\theta}\|_{r,s;\alpha} >k   \} \wedge T, \text{ for }k>0,
\end{align*}
 well-defined, with $\bP (r<\tau^{\theta}_{k,\alpha})=1$ and  $ \tau^{\theta}_{k,\alpha} $ increasingly converging to $T$ as $k\rightarrow \infty$.
\end{rmk}

We then generalize an It\^o-Kunita-Wentzell-Krylov formula (see \cite[Pages 118-119]{kunita1981some} for instance) for the composition of random fields and stochastic differential equations to our \textit{path-dependent} setting. Recall that for each $\phi\in\mathscr C^2_{\sF}$, $v\in U$, $t\in [0,T]$, and $x_t\in \Lambda_t$,
{\small
\begin{align*}
\mathscr L^{v}\phi(t,x_t)=\,
&
 \mathfrak{d}_t \phi (t,x_t)      +\beta'(t,x_t,v)\nabla\phi(t,x_t) 
 \\
 &+  \text{tr}\left\{\frac{1}{2}
 \sigma(t,x_t,v)\sigma'(t,x_t,v)\nabla ^2\phi(t,x_t) + \sigma(t,x_t,v)\mathfrak{d}_{\omega} \nabla \phi(t,x_t)
 \right\} .
\end{align*}
}
\begin{lem}\label{lem-ito-wentzell}
  Let assumption $(\cA1)$ hold.
 Suppose  $u\in\mathscr C_{\sF}^2$ with the associated partition $0=\underline t_0<\underline t_1<\ldots<\underline t_n=T$. Then, for each $\theta\in\cU$, it holds almost surely that, for each $\underline t_j \leq \varrho \leq \tau < \underline t_{j+1}$, $j=0,\ldots,n-1$, and $x_{\varrho}\in \Lambda_{\varrho}$, it holds that
    \begin{align}
  u( {\tau},X^{{\varrho},x_{\varrho};\theta}_ {\tau}) &= u(\underline t_j,x_{{\varrho}}) +
     	\!\int_{\varrho}^{\tau}    \mathscr L^{\theta(s)} u\left(s,X^{{\varrho},x_{\varrho};\theta}_s \right)    \,ds     \nonumber \\
	&\quad +\int_{\varrho}^{\tau} \left( (\nabla u)'(r,X^{{\varrho},x_{\varrho};\theta}_r)  \sigma(r,X^{{\varrho},x_{\varrho};\theta}_r,\theta(r)) +  \mathfrak{d}_{\omega}u(r,X^{{\varrho},x_{\varrho};\theta}_r)    
     \right)\,dW(r),\text{ a.s..} \label{eq-ito}
   \end{align}
\end{lem}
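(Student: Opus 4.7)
The plan is a path-dependent adaptation of the classical It\^o-Kunita-Wentzell-Krylov argument: along a refining time partition, alternate horizontal extensions with vertical Taylor expansions of $u$, and use the covariation identity \eqref{def-dwD} to capture the cross term between $\nabla u$ and the martingale part of $X$. It suffices to establish \eqref{eq-ito} on any closed $[\varrho,\tau]\subset[\underline t_j,\underline t_{j+1}-\eps]$, since both sides are continuous in $\tau$; sending $\eps\downarrow 0$ and then concatenating across $j$ completes the proof on all of $[\underline t_j,\underline t_{j+1})$. The sample-path H\"older control of $X^{\varrho,x_\varrho;\theta}$ via the stopping times $\tau^{\theta}_{k,\alpha}$ from Remark~\ref{rmk-holder-contin}, together with the moment bounds in Lemma~\ref{lem-SDE}, supplies the integrability required to pass to the limit.

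Given a partition $\Pi_N=\{\varrho=t_0^N<\cdots<t_{M_N}^N=\tau\}$ with $|\Pi_N|\downarrow 0$, telescope and split each increment as $u(t_{k+1}^N,X_{t_{k+1}^N})-u(t_k^N,X_{t_k^N})=A_k+B_k$, where
\begin{align*}
A_k&:=u(t_{k+1}^N,X_{t_k^N,\,t_{k+1}^N-t_k^N})-u(t_k^N,X_{t_k^N}),\\
B_k&:=u(t_{k+1}^N,X_{t_{k+1}^N})-u(t_{k+1}^N,X_{t_k^N,\,t_{k+1}^N-t_k^N}).
\end{align*}
Applying Definition~\ref{defn-testfunc}(i)(a) with $r=t_k^N$ and summing gives $\sum_k A_k\to \int_\varrho^\tau \mathfrak{d}_s u(s,X_s)\,ds+\int_\varrho^\tau \mathfrak{d}_\omega u(s,X_s)\,dW(s)$ in $L^2$, using the spatial H\"older continuity from Definition~\ref{defn-testfunc}(ii) and Lemma~\ref{lem-SDE}(iii). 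For $B_k$, set $h_k:=X(t_{k+1}^N)-X(t_k^N)=\int_{t_k^N}^{t_{k+1}^N}\beta\,ds+\int_{t_k^N}^{t_{k+1}^N}\sigma\,dW$; since $\nabla^2 u$ is bounded and $\alpha$-H\"older in the spatial variable, a second-order vertical Taylor expansion of $u(t_{k+1}^N,\cdot^h)$ yields
\begin{align*}
B_k=(\nabla u)'(t_{k+1}^N,X_{t_k^N,\,t_{k+1}^N-t_k^N})\,h_k+\tfrac12\,h_k'\nabla^2 u(t_{k+1}^N,X_{t_k^N,\,t_{k+1}^N-t_k^N})\,h_k+R_k,
\end{align*}
with $\sum_k|R_k|=O\bigl(\sum_k|h_k|^{2+\alpha}\bigr)\to 0$ in $L^1$.

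The second-order sum converges to $\tfrac12\int_\varrho^\tau\mathrm{tr}(\sigma\sigma'\nabla^2 u)\,ds$ by a standard It\^o-Riemann argument. For the first-order sum, decompose
\[
(\nabla u)(t_{k+1}^N,X_{t_k^N,\,t_{k+1}^N-t_k^N})=(\nabla u)(t_k^N,X_{t_k^N})+\Delta_k^{\nabla u},
\]
and pair each piece with $h_k$. The first piece gives, in the limit, $\int_\varrho^\tau(\nabla u)'\beta\,ds+\int_\varrho^\tau(\nabla u)'\sigma\,dW$. The cross piece $\sum_k(\Delta_k^{\nabla u})'h_k$ is exactly the discretized bracket appearing in \eqref{def-dwD}: applying \eqref{def-dwD} componentwise with $g$ a piecewise-constant adapted approximation of $\sigma(\cdot,X_\cdot,\theta(\cdot))$, this sum converges in probability to $\int_\varrho^\tau\mathrm{tr}(\sigma\,\mathfrak{d}_\omega\nabla u)\,ds$, while the drift piece of $h_k$ paired with $\Delta_k^{\nabla u}$ is $O(|\Pi_N|^\alpha)$ and vanishes. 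Reassembling all contributions produces $\mathscr{L}^{\theta(s)} u$ under the $ds$-integral and the announced $dW$-integrand.

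The principal difficulty is the covariation step. The identity \eqref{def-dwD} is stated for a single stochastic integral with bounded predictable $g$, whereas what is needed here is its consequence summed over a refining partition for the random path-dependent matrix $\sigma(s,X_s,\theta(s))$. The plan is to first approximate $\sigma(\cdot,X_\cdot,\theta(\cdot))$ in $L^2(\Omega\times[\varrho,\tau])$ by bounded, adapted, simple processes (using continuity of $\sigma$ in the path variable together with Lemma~\ref{lem-SDE}(iii)--(iv)), invoke \eqref{def-dwD} columnwise on each piece, and then exchange the partition limit with the $L^2$-approximation using the uniform boundedness and $\alpha$-H\"older continuity of $\mathfrak{d}_\omega\nabla u$ from Definition~\ref{defn-testfunc}(ii). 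Once this step is settled, upgrading from in-probability to almost-sure equality via the uniform moment estimates, and then gluing across the finitely many $[\underline t_j,\underline t_{j+1})$, are routine.
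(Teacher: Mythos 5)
Your overall architecture (telescope, split into a horizontal step handled by Definition~\ref{defn-testfunc}(i)(a) and a ``vertical'' step handled by a second-order Taylor expansion, extract the cross term via \eqref{def-dwD}) is the same as the paper's, but there is a genuine gap in how you set up the vertical step. You define
$B_k=u(t_{k+1}^N,X_{t_{k+1}^N})-u(t_{k+1}^N,X_{t_k^N,\,t_{k+1}^N-t_k^N})$ and expand it by a vertical Taylor formula in $h_k=X(t_{k+1}^N)-X(t_k^N)$. This is not legitimate: the vertical perturbation $\bigl(X_{t_k^N,\,t_{k+1}^N-t_k^N}\bigr)^{h_k}$ modifies the frozen path \emph{only at the single time point} $t_{k+1}^N$, whereas the true restricted path $X_{t_{k+1}^N}$ differs from the frozen path on the whole interval $(t_k^N,t_{k+1}^N)$. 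The vertical derivatives $\nabla u,\nabla^2u$ carry no information about perturbations in the interior of that interval, so your remainder bound $\sum_k|R_k|=O\bigl(\sum_k|h_k|^{2+\alpha}\bigr)$ omits the path-mismatch error
$u\bigl(t_{k+1}^N,X_{t_{k+1}^N}\bigr)-u\bigl(t_{k+1}^N,(X_{t_k^N,\,t_{k+1}^N-t_k^N})^{h_k}\bigr)$, which for a functional that is merely Lipschitz in $\|\cdot\|_0$ is of order $\sup_{s\in(t_k^N,t_{k+1}^N)}|X(s)-X(t_k^N)|\sim|\Pi_N|^{1/2}$ per increment. Summed over the $\sim|\Pi_N|^{-1}$ increments this gives $O(|\Pi_N|^{-1/2})$, which does not vanish, and nothing in the membership $u\in\mathscr C^2_{\sF}$ forces cancellation (it vanishes for state-dependent or integral functionals, but not for genuinely path-dependent ones).

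The paper avoids this by running the entire telescoping argument along the piecewise-constant approximation $^N\!X$ of the state process: for $^N\!X$ the passage from $^N\!X_{t_{i+1}-}$ to $^N\!X_{t_{i+1}}$ is \emph{exactly} a vertical perturbation by $X(t_{i+1})-X(t_i)$, so the Taylor expansion is exact up to the genuine second-order remainder, and the comparison between $^N\!X$ and $X$ is made only once per term through the continuity of $u$ and of the integrands (plus dominated convergence for the Lebesgue and stochastic integrals), never summed over increments. To repair your proof you should replace $X$ by $^N\!X$ throughout the decomposition, as in the paper; the rest of your outline (the treatment of $A_k$, the identification of the cross term with $\int\mathrm{tr}(\sigma\,\mathfrak{d}_{\omega}\nabla u)\,dt$ via \eqref{def-dwD} with simple approximations of $\sigma(\cdot,X_\cdot,\theta(\cdot))$, and the quadratic term) then goes through essentially as in the paper's argument.
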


\begin{proof}
W.l.o.g., we only prove \eqref{eq-ito} for $ {\tau} \in (0,\underline t_1)$, ${\varrho}=0$ and $x_0=x\in \bR^d$. 
For each $N\in\bN^+$ with $N>2$, letting $t_i=\frac{i {\tau}}{N}$ for $i=0,1,\dots,N$, we get a partition of $[0, {\tau}]$ with $0=t_0<t_1<\cdots<t_{N-1}<t_N= {\tau}$. For each $\theta\in\cU$, set
$$
^N\!X(t)=\sum_{i=0}^{N-1} X^{0,x;\theta}({t_i})1_{[t_i,t_{i+1})(t)} + X^{0,x;\theta}( {\tau})1_{\{ {\tau}\}}(t), \quad \text{for }t\in[0, {\tau}],
$$
and $^N\!X_{t-}(s)= \, ^N\!X(s)1_{[0,t)}(s) + \lim_{r\rightarrow t^-} \, ^N\!X(r) 1_{\{t\}}(s)$, for $0\leq s\leq t\leq  {\tau}$. Due to the time-continuity of $X^{0,x;\theta}$, there holds the following approximation:
$$
\lim_{N\rightarrow \infty} \|X^{0,x;\theta}-^N\!X\|_0+\|X^{0,x;\theta}_t-^N\!X_{t-}\|_0=0\quad \text{for all }t\in(0, {\tau}],\quad \text{a.s..}
$$

Then, we have
\begin{align}
u( {\tau},^N\!X_ {\tau})-u(0,x)
&=\!\! \sum_{i=0}^{N-1} u(t_{i+1},^N\!X_{t_{i+1}-})-u(t_i,^N\!X_{t_i}) 
+\!\! \sum_{i=0}^{N-1} u(t_{i+1},^N\!X_{t_{i+1}})-u(t_{i+1},^N\!X_{t_{i+1}-})
\nonumber\\
:&= I^{(N)}_1+I^{(N)}_2. \label{eq-docomp}
\end{align}
%
%
As $u\in\mathscr C_{\sF}^2$, it holds that
{\small
\begin{align*}
I^{(N)}_1
&=\sum_{i=0}^{N-1}u(t_{i+1},^N\!X_{t_{i+1}-})-u(t_i,^N\!X_{t_i})
=\sum_{i=0}^{N-1}\int_{t_i}^{t_{i+1}}  \mathfrak{d}_tu(s,^N\!X_{s-})\,ds 
	+ \int_{t_i}^{t_{i+1}} \mathfrak{d}_{\omega}u(r,^N\!X_{r-})    
     \,dW(r),
\end{align*}
}
which, as $N$ tends to infinity, converges in probability to 
\begin{align}
\int_0^ {\tau}\mathfrak{d}_ru(r,X^{0,x;\theta}_r)\,dr
+\int_0^ {\tau} \mathfrak{d}_{\omega} u (r,X^{0,x;\theta}_r) \,dW(r). \label{ito-term-1}
\end{align}

On the other hand, by the definition of vertical derivatives, it holds that
\begin{align*}
I^{(N)}_2
&=\sum_{i=0}^{N-1}  \left( \nabla u(t_{i+1},^N\!X_{t_{i+1}-}) \right)' \left( ^N\!X(t_{i+1})-    ^N\!X({t_{i+1}-}) \right)
\\
&\quad 
+ \frac{1}{2}\sum_{i=0}^{N-1}  \left( ^N\!X(t_{i+1})-    ^N\!X({t_{i+1}-})   \right) ' \nabla^2u (t_{i+1},^N\!X_{t_{i+1}-} ^h) \left( ^N\!X(t_{i+1})-    ^N\!X({t_{i+1}-})   \right) 
\\
&:=M_1^{(N)} + M_2^{(N)},
\end{align*}
for some $h$ satisfying $\left|h-^N\!X({t_{i+1}-}) \right| \leq \left|  ^N\!X(t_{i+1})-    ^N\!X({t_{i+1}-})  \right|$. Further, we have
{\small
\begin{align*}
M_1^{(N)}
&
	=
	\left( \nabla u(t_1,^N\! X_{t_1-}) \right)' (^N\! X(t_1)-x) 
	+
	\sum_{i=1}^{N-1} \left( \nabla u(t_{i},^N\!X_{t_{i}}) \right)' \left( ^N\!X(t_{i+1})-    ^N\!X({t_{i+1}-}) \right)
\\
&\quad
	+\sum_{i=1}^{N-1} \left(\nabla u(t_{i+1},^N\!X_{t_{i+1}-}) - \nabla u(t_{i},^N\!X_{t_{i}}) \right) \left( ^N\!X(t_{i+1})-    ^N\!X({t_{i+1}-}) \right).
\end{align*}
}
Notice that
{\small
\begin{align*}
&
\sum_{i=1}^{N-1} \left(\nabla u(t_{i+1},^N\!X_{t_{i+1}-}) - \nabla u(t_{i},^N\!X_{t_{i}}) \right)' \left( ^N\!X(t_{i+1})-    ^N\!X({t_{i+1}-}) \right)
\\
&
=
\sum_{i=1}^{N-1} \left(\nabla u(t_{i+1},^N\!X_{t_{i+1}-}) - \nabla u(t_{i},^N\!X_{t_{i}}) \right)'  \int^{t_{i+1}}_{t_{i}} \!\!\!\beta(r,X^{0,x ;\theta}_r,\theta(r)) dr  
\\
&\quad
+\sum_{i=1}^{N-1} \left(\nabla u(t_{i+1},^N\!X_{t_{i+1}-}) - \nabla u(t_{i},^N\!X_{t_{i}}) \right)'
\int^{t_{i+1}}_{t_{i}} \!\!\! \sigma(r,X^{0,x ;\theta}_r,\theta(r))\,dW(r) 
\\
&:=P_1^{(N)} + P_2^{(N)},
\end{align*}
}
where $P_1^{(N)}$ converges to  zero in probability  due to the boundedness of $\beta$ and the continuity of $\nabla u$ and in view of (i)-(b) for $\mathfrak{d}_{\omega}\nabla u$ in Definition \ref{defn-testfunc}, we have  $P_2^{(N)}$ converge in probability to 
$\int_0^ {\tau}    \text{tr}\left\{ \sigma(t,X^{0,x;\theta}_t,\theta(t)) \mathfrak{d}_{\omega} \nabla u (t,X^{0,x;\theta}_t)\right\}  dt$. This combined with some standard computations yields the convergence (in probability) of $M_1^{(N)}$ with the limit being
\begin{align}
&\int_0^ {\tau}  \left( \nabla u(t,X_t^{0,x;\theta}) \right)' \,dX^{0,x;\theta}(t)
+ \int_0^ {\tau} \sum_{i=1}^d\sum_{l=1}^{m}    
  \sigma^{il}(t,X_t^{0,x;\theta},\theta(t)) (\mathfrak{d}_{\omega} \nabla u)^{li}(t,X_t^{0,x;\theta}) \, dt
\nonumber\\
&=
	\int_0^ {\tau} \left(  \beta'(t,X^{0,x;\theta}_t,\theta(t)) \nabla u(t,X_t^{0,x;\theta}) +  \text{tr}\left\{ \sigma(t,X^{0,x;\theta}_t,\theta(t)) \mathfrak{d}_{\omega} \nabla u (t,X^{0,x;\theta}_t)\right\} \right) dt
	\nonumber\\
&\quad	
+\int_0^ {\tau}  (\nabla u)'(r,X^{0,x;\theta}_r) \sigma(r,X^{0,x;\theta}_r,\theta(r)) \, dW(r). \label{ito-term-2}
\end{align}
Meanwhile, straightforward standard calculations give the convergence of $M_2^{(N)}$  to
\begin{align}
&\frac{1}{2}\int_0^ {\tau} \sum_{i,j=1}^d    
 \nabla^2_{ij} u(t,X_t^{0,x;\theta}) \,d \left\langle X^{0,x;\theta} \right\rangle^{ij}(t)
\nonumber \\
&
=
	\int_0^ {\tau}  \frac{1}{2} \text{tr}\left\{ \sigma(t,X^{0,x;\theta}_t,\theta(t))\sigma'(t,X^{0,x;\theta}_t,\theta(t)) \nabla^2 u (t,X^{0,x;\theta}_t)\right\} dt. \label{ito-term-3}
\end{align}

 In the course of approaching the limits \eqref{ito-term-1}, \eqref{ito-term-2}, and \eqref{ito-term-3}, the dominated convergence and the dominated convergence theorem for stochastic integrals (\cite[Chapter IV, Theorem 32]{pP05})  imply that the Lebesgue integrals converge almost surely and the stochastic integrals in probability. Finally, summing up all the obtained convergences yields the desired equality.
\end{proof}

Now, we discuss some properties of classical/viscosity semisolutions. 
\begin{thm}\label{thm-classical-viscosity}
Let Assumption $(\cA 1)$ hold. Each classical subsolution (resp. supersolution) is a viscosity subsolution (resp. supersolution), and thus, each classical solution is a viscosity solution.
\end{thm}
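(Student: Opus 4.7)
The two implications are symmetric, so I will focus on the subsolution case; the supersolution one follows by reversing inequalities and swapping $\underline{\cG}$ with $\overline{\cG}$. Fix a classical subsolution $u\in\underline{\mathscr V}\cap\mathscr C^2_{\sF}$. Given $(K_0,\alpha_0)\in\bN^+\times(0,1)$, the plan is to choose $\alpha\in(0,\alpha_0\wedge\tfrac12)$ and $k\geq K_0$ large enough that the H\"older stopping times $\tau^\theta_{k,\alpha}$ of Remark~\ref{rmk-holder-contin} are strictly positive almost surely, uniformly in $\theta\in\cU$. With this $(k,\alpha)$ fixed, take arbitrary $\tau\in\cT^0$, $\Omega_\tau\in\sF_\tau$ with $\bP(\Omega_\tau)>0$, $\xi\in L^0(\Omega_\tau,\sF_\tau;\Lambda^0_\tau)$, and $\phi\in\underline{\cG}u(\tau,\xi;\Omega_\tau,k,\alpha)$, and verify the viscosity subsolution inequality \eqref{defn-vis-sub} by contradiction.

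Suppose \eqref{defn-vis-sub} fails on some $\tilde\Omega_\tau\subset\Omega_\tau$ with $\bP(\tilde\Omega_\tau)>0$. Using the identity $-\mathfrak{d}_s\phi-\bH(\phi)=-\essinf_v\{\mathscr L^v\phi+f(\cdot,v)\}$, the failure yields $\delta_0,\eps_0>0$ such that
\[
E_{\sF_\tau}\bigl\{\essinf_v[\mathscr L^v\phi+f(v)]\bigr\}(s,x)<-\delta_0 \quad\text{on }\tilde\Omega_\tau
\]
for every $(s,x)$ close enough to $(\tau^+,\xi)$ in the sense of Definition~\ref{defn-viscosity}. The classical subsolution property of $u$, written in the analogous form, gives $\text{ess}\liminf_{(s,y)\to(\tau^+,\xi)}E_{\sF_\tau}\bigl\{\essinf_v[\mathscr L^v u+f(v)](s,y)\bigr\}\geq0$. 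A measurable selection on $U$, exploiting the $v$-continuity from $(\cA1)$ and the regularity estimates of Remark~\ref{rmk-bH}, then produces $\theta\in\cU$ that is $\delta_0/4$-optimal for the $\phi$-Hamiltonian along $X^{\tau,\xi;\theta}$, so that for some $\eps\in(0,\eps_0)$ and every $s\in(\tau,\tau+\eps)$ on $\tilde\Omega_\tau$,
\[
E_{\sF_\tau}\bigl[\mathscr L^{\theta(s)}\phi+f(s,X^{\tau,\xi;\theta}_s,\theta(s))\bigr]<-\tfrac{3\delta_0}{4}, \qquad E_{\sF_\tau}\bigl[\mathscr L^{\theta(s)}u+f(s,X^{\tau,\xi;\theta}_s,\theta(s))\bigr]\geq-\tfrac{\delta_0}{4}.
\]

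Next I would apply the generalized It\^o-Kunita-Wentzell-Krylov formula of Lemma~\ref{lem-ito-wentzell} to $\phi$ and to $u$ separately along $X^{\tau,\xi;\theta}$ on $[\tau,\bar\tau]$ with
\[
\bar\tau:=(\tau+\eps)\wedge\hat\tau_k\wedge\tau^\theta_{k,\alpha}\wedge\underline t_{j+1},
\]
where $[\underline t_j,\underline t_{j+1})$ is the partition interval of $\phi$ from Definition~\ref{defn-testfunc} containing $\tau$. Subtracting, taking $\sF_\tau$-conditional expectation and using Fubini, the stochastic integrals vanish and the two bounds above combine to yield
\[
E_{\sF_\tau}\bigl[(\phi-u)(\bar\tau,X^{\tau,\xi;\theta}_{\bar\tau})\bigr]=E_{\sF_\tau}\!\int_\tau^{\bar\tau}\!\bigl[\mathscr L^{\theta(s)}\phi-\mathscr L^{\theta(s)}u\bigr](s,X^{\tau,\xi;\theta}_s)\,ds<-\tfrac{\delta_0}{2}E_{\sF_\tau}[\bar\tau-\tau],
\]
which is strictly negative on $\tilde\Omega_\tau$. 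On the other hand, because $\bar\tau\leq\tau^\theta_{k,\alpha}$ the restriction of $X^{\tau,\xi;\theta}$ to $[0,\bar\tau]$ lies in $\Lambda^{0,k,\alpha;\xi}_{\tau,\bar\tau}$ almost surely, so the defining property of $\phi\in\underline{\cG}u(\tau,\xi;\Omega_\tau,k,\alpha)$ forces
\[
0=\essinf_{\bar\tau'\in\cT^\tau}E_{\sF_\tau}\!\!\left[\inf_{y\in\Lambda^{0,k,\alpha;\xi}_{\tau,\bar\tau'}}\!(\phi-u)(\bar\tau'\wedge\hat\tau_k,y)\right]\leq E_{\sF_\tau}\bigl[(\phi-u)(\bar\tau,X^{\tau,\xi;\theta}_{\bar\tau})\bigr]
\]
on $\Omega_\tau$, the desired contradiction.

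The hardest step will be the measurable selection of the $\delta_0/4$-optimal control $\theta$ in such a way that the two one-sided Hamiltonian estimates hold simultaneously on the same small time window $(\tau,\tau+\eps)$; I would handle this via a Filippov-type argument leaning on the $v$-continuity from $(\cA1)$, the H\"older regularity of the derivatives of $\phi$ from Definition~\ref{defn-testfunc}(ii), and the continuity bound \eqref{R-Lip-const} of Remark~\ref{rmk-bH}. The remaining bookkeeping—arranging the three stopping times $\hat\tau_k$, $\tau^\theta_{k,\alpha}$, $\underline t_{j+1}$ to be simultaneously positive on $\tilde\Omega_\tau$ and checking that the chosen $(k,\alpha)$ is compatible with the prescribed $(K_0,\alpha_0)$—is then routine.
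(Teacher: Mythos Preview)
Your outline follows the same route as the paper: argue by contradiction, select a near-optimal control via measurable selection, apply the It\^o--Kunita--Wentzell--Krylov formula of Lemma~\ref{lem-ito-wentzell} to $\phi-u$, and contradict the touching property in $\underline{\cG}u$. The structural skeleton is correct.

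There is, however, one point that is not quite right as written and that the paper handles differently. You describe the selected control $\theta$ as ``$\delta_0/4$-optimal for the $\phi$-Hamiltonian along $X^{\tau,\xi;\theta}$'' and then claim the two one-sided bounds
\[
E_{\sF_\tau}\bigl[\mathscr L^{\theta(s)}\phi+f\bigr](s,X^{\tau,\xi;\theta}_s)<-\tfrac{3\delta_0}{4},\qquad
E_{\sF_\tau}\bigl[\mathscr L^{\theta(s)}u+f\bigr](s,X^{\tau,\xi;\theta}_s)\geq-\tfrac{\delta_0}{4}.
\]
This is circular (the trajectory depends on $\theta$) and, more importantly, the bounds coming from the failure of \eqref{defn-vis-sub} and from the classical subsolution inequality for $u$ are statements about $E_{\sF_\tau}[\,\cdot\,](s,x)$ with $x$ ranging over \emph{$\sF_\tau$-measurable} paths in $\Lambda^{0,k,\alpha;\xi}_{\tau,s}$; you cannot plug in the $\sF_s$-measurable random path $X^{\tau,\xi;\theta}_s$ directly. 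The paper avoids both issues by (i) selecting $\bar\theta$ near-optimal for the $\phi$-infimum at the \emph{frozen} horizontal extension $\xi_s=\xi_{\tau,s-\tau}$, which is $\sF_\tau$-measurable and independent of $\bar\theta$, and (ii) using the classical subsolution inequality of $u$ also at $\xi_s$; then the H\"older bound \eqref{R-Lip-const} from Remark~\ref{rmk-bH}, combined with the confinement $\|X^{\tau,\xi;\bar\theta}_s-\xi_s\|_0\leq\bar\delta$ enforced by the stopping times, transfers both estimates to the controlled path with explicit error terms $L^{\phi}_{\alpha'}(\bar\delta^{\alpha'}+\bar\delta)$ and $L^{u}_{\tilde\alpha}(\bar\delta^{\tilde\alpha}+\bar\delta)$ that are absorbed by taking $\bar\delta$ small at the very end. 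Your reference to Remark~\ref{rmk-bH} suggests you have this mechanism in mind, but the two-step ``freeze, then transfer'' structure needs to be made explicit; once it is, the Filippov-type simultaneity concern you flag disappears, since the selection is on a fixed ($\theta$-independent) path and the $u$-bound requires no selection at all, only $\mathscr L^{\bar\theta(s)}u+f\geq\essinf_v\{\mathscr L^v u+f\}$.
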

\begin{proof}
\textbf{Step 1.} We first prove that each $ \overline \mu \in \overline{\mathscr V} $ is a viscosity supersolution. Indeed, for each $\phi\in \overline\cG \overline \mu(\tau,\xi_{\tau};\Omega_{\tau},k,\alpha)$ with $k>0$ and $\alpha\in (0,\frac{1}{2})$, $\tau\in \mathcal T^0$, $\Omega_{\tau}\in\sF_{\tau}$, $\mathbb P(\Omega_{\tau})>0$, and $\xi_{\tau}\in L^0(\Omega_{\tau},\sF_{\tau};\Lambda_{\tau}^0)$, the relation \eqref{defn-vis-sup} holds  for almost all $ \omega\in\Omega_{\tau}$. Suppose that, to the contrary, there exist $\eps, \tilde\delta>0$ and $\Omega'\in\sF_{\tau}$ with $\Omega'\subset \Omega_{\tau}$, $\bP(\Omega')>0$, such that a.e. on $\Omega'$,
\begin{align*}
\esssup_{
s \in (\tau,(\tau+4\tilde\delta^2) \wedge T],\,x\in B_{2\tilde\delta}(\xi_{\tau}) \cap \Lambda^{0,k,\alpha;\xi_{\tau}}_{\tau,s\wedge T}	
			} 
	\!\!\! E_{\sF_{\tau}}
		 \left\{ -\mathfrak{d}_{s}\phi(s,x)-\bH(s,x,\nabla \phi(s,x),\nabla^2\phi(s,x),\mathfrak{d}_{\omega}\nabla\phi(s,x)) \right\} 
		 \leq -\eps.
\end{align*}

Let  $\hat{\tau}_k$ be the stopping time associated to the fact $\phi\in \overline\cG \overline \mu (\tau,\xi_{\tau};\Omega_{\tau},k)$. We may think of $\xi$ valued in $\Lambda^0_T$, with $\xi(t)=\xi_{ \tau}(t\wedge \tau)$ for all $t\in [0,T]$.
Notice that, associated to $\phi\in\mathscr C_{\sF}^2$ and $\overline \mu \in\mathscr C_{\sF}^2$, the two partitions may be combined into one: $0=\underline t_0<\underline t_1<\ldots<\underline t_n=T$. W.l.o.g., we assume $\tilde{\delta}\in (0,1)$, and $\Omega'=\{[\tau,\tau+4\tilde{\delta}^2] \subset [\underline t_j,\underline t_{j+1})\}=\Omega$ for some $j\in\{0,1,\ldots, n-1\}$.


For each $\theta\in\cU$, define
$\tau^{\theta}=\inf\{ s>\tau: X^{\tau,\xi_{\tau};{\theta}}_s \notin B_{\tilde\delta}(\xi_{\tau}) \} \wedge T$, and set
\begin{align*}
\tau^{\theta}_{k,\alpha} =\inf\{s> \tau; \|X^{\tau,\xi_{\tau};\theta}\|_{\tau,s;\alpha} >k   \} \wedge T. 
\end{align*} 
Then $\bP(\tau^{\theta}  >\tau,\,\tau^{\theta}_{k,\alpha} >\tau)=1$. Letting $\bar \tau= \hat\tau_k \wedge \tau^{\theta} \wedge \tau^{\theta}_{k,\alpha} \wedge (\tau+\tilde \delta^2) \wedge T$, we have $\bP(\bar\tau >\tau)=1$.  Then, for each $\theta\in\cU$,
\begin{align}
\essinf_{s\in (\tau,\bar\tau]}  E_{\sF_{\tau}} \left[ 
 \mathscr L^{\theta(s)} \phi (s,X_s^{\tau,\xi_{\tau};\theta})  +f(s,X_s^{\tau,\xi_{\tau};\theta},\theta(s)) \right] \geq \eps.
 \label{relation-1}
\end{align}

On the other hand, as $ \overline \mu \in \overline{\mathscr V} $,  it holds that for  all $t\in[0,T)$ with $y\in\Lambda_t^0$,
\begin{align*}
\text{ess}\liminf_{(s,x)\rightarrow (t^+,y)}
	E_{\sF_{t}} \left\{  -\mathfrak{d}_{s}\overline \mu(s,x)-\bH(s,x,\nabla \overline \mu(s,x),\nabla^2 \overline \mu(s,x), \mathfrak{d}_{\omega}\nabla \overline \mu(s,x)) \right\} \geq 0, \text{ a.s.,}
\end{align*}
which implies that there exists $\delta \in (0,\tilde \delta)$ such that 
\begin{align*}
\essinf_{
s \in (\tau,(\tau+4\delta^2) \wedge T]	
			} 
	E_{\sF_{\tau}}
		 \left\{ -\mathfrak{d}_{s}\overline \mu(s,\xi_s)-\bH(s,\xi_s,\nabla \overline \mu(s,\xi_s),\nabla^2\overline \mu(s,\xi_s),\mathfrak{d}_{\omega}\nabla\overline \mu(s,\xi_s)) \right\} 
		 \geq -\frac{\eps}{4}.
\end{align*}
Take $\tilde \tau =\bar \tau \wedge (\tau+\delta^2) \wedge T$.  By the measurable selection theorem, there exists $\tilde \theta\in\cU$ such that 
\begin{align*}
\esssup_{
s \in (\tau,\tilde \tau]	
			} 
	E_{\sF_{\tau}}
		 \left\{ \mathscr L^{\tilde\theta(s)} \overline \mu(s,\xi_s)
		 +f(s,\xi_s,\tilde\theta(s)) \right\} 
		 \leq \frac{\eps}{2},
\end{align*}
which together with Remark \ref{rmk-bH} indicates that 
\begin{align}
  \int_{\tau}^{ \tilde \tau} 
  E_{\sF_{\tau}} \left[ 
 \mathscr L^{\tilde \theta(s)} \overline \mu (s,X_s^{\tau,\xi_{\tau};\tilde \theta})  +f(s,X_s^{\tau,\xi_{\tau};\tilde\theta},\tilde\theta(s)) \right] ds
 \leq \int_{\tau}^{ \tilde \tau} E_{\sF_{\tau}} \left[
  \frac{\eps}{2} + L^{\overline \mu}_{\tilde \alpha} |\tilde\delta^{\tilde\alpha}+\tilde \delta |  \right] ds \quad \text{a.s.,}
 \label{relation-2}
\end{align}
where $\tilde \alpha$ is the exponent associated to $\overline \mu\in \mathscr C^2_{\sF}$. Combining \eqref{relation-1} and \eqref{relation-2} gives
\begin{align*}
  \int_{\tau}^{ \tilde \tau}    E_{\sF_{\tau}} \left[ 
 \mathscr L^{\tilde \theta(s)} (\phi-\overline \mu) (s,X_s^{\tau,\xi_{\tau};\tilde \theta})    \right] \,ds\geq  \int_{\tau}^{ \tilde \tau} E_{\sF_{\tau}} \left[ 
  \frac{\eps}{2} -  L^{\overline \mu}_{\tilde \alpha}  |\tilde\delta^{\tilde\alpha}+\tilde \delta |  \right] ds,\quad \text{a.s..}
\end{align*}
Applying the It\^o-Kunita-Wentzell-Krylov formula in Lemma \ref{lem-ito-wentzell} further yields that 
\begin{align*}
E \left[ (\phi -\overline \mu)(\bar\tau, \,X_{\bar \tau}^{\tau,\xi_{\tau};\tilde \theta})  \right]
&= E  \left[ (\phi -\overline \mu)(\tau, \,\xi_{\tau}) 
+\int_{\tau}^{\tilde\tau} 
 \mathscr L^{\tilde \theta(s)} (\phi-\overline \mu) (s,X_s^{\tau,\xi_{\tau};\tilde \theta})\,ds
 \right]
 \\
 &
 =E \left[  
\int_{\tau}^{\tilde\tau} 
 \mathscr L^{\tilde \theta(s)} (\phi-\overline \mu) (s,X_s^{\tau,\xi_{\tau};\tilde \theta})\,ds
 \right]\\
 &\geq E \left[ \int_{\tau}^{ \tilde \tau} \left(
  \frac{\eps}{2} -  L^{\overline \mu}_{\tilde \alpha}  |\tilde\delta^{\tilde \alpha}+\tilde \delta |  \right)ds \right],
\end{align*}
which is $>0$ when $\tilde \delta$ is sufficiently small, contradicting with $\phi\in \overline\cG \overline \mu(\tau,\xi_{\tau};\Omega_{\tau},k,\alpha)$.

\textbf{Step 2.} To prove that each $ \underline \mu \in \underline{\mathscr V} $ is a viscosity subsolution, it is sufficient to verify that for each $\phi\in \underline\cG \underline \mu(\tau,\xi_{\tau};\Omega_{\tau},k,\alpha)$ with $k>0$ and $\alpha\in (0,\frac{1}{2})$, $\tau\in \mathcal T^0$, $\Omega_{\tau}\in\sF_{\tau}$, $\mathbb P(\Omega_{\tau})>0$, and $\xi_{\tau}\in L^0(\Omega_{\tau},\sF_{\tau};\Lambda_{\tau}^0)$, the relation \eqref{defn-vis-sub} holds  for almost all $ \omega\in\Omega_{\tau}$. 

To the contrary, suppose that there exist $\bar \eps, \bar \delta\in (0,1)$ and $\Omega'\in\sF_{\tau}$ with $\Omega'\subset \Omega_{\tau}$, $\bP(\Omega')>0$, such that a.e. on $\Omega'$,
\begin{align*}
\essinf_{
s \in (\tau,(\tau+4\bar\delta^2) \wedge T],\,x\in B_{2\bar\delta}(\xi_{\tau}) \cap \Lambda^{0,k,\alpha;\xi_{\tau}}_{\tau,s\wedge T}	
			} 
	E_{\sF_{\tau}}
		 \left\{ -\mathfrak{d}_{s}\phi(s,x)
		 -
		 \bH(s,x,\nabla \phi(s,x),\nabla^2\phi(s,x),\mathfrak{d}_{\omega}\nabla\phi(s,x))
		  \right\} 
		 \geq 2\bar\eps.
\end{align*}
Let  $\hat{\tau}_k$ be the stopping time associated to $\phi\in \underline\cG \underline \mu (\tau,\xi_{\tau};\Omega_{\tau},k,\alpha)$. Again, we think of $\xi$ as  a path in $\Lambda^0_T$, with $\xi(t)=\xi_{ \tau}(t\wedge \tau)$ for all $t\in [0,T]$, and the two partitions,  associated to $\phi\in\mathscr C_{\sF}^2$ and $\underline \mu \in\mathscr C_{\sF}^2$,  are combined into one: $0=\underline t_0<\underline t_1<\ldots<\underline t_n=T$. W.l.o.g., we assume  $\Omega'=\{[\tau,\tau+4\bar{\delta}^2] \subset [\underline t_j,\underline t_{j+1})\}=\Omega$ for some $j\in\{0,1,\ldots, n-1\}$.

By   the measurable selection theorem, there exists $\bar\theta\in \cU$ such that a.s.,
$$
- \mathscr L^{\bar\theta(s)}\phi(s,\xi_s)  -f(s,\xi_s,\bar\theta(s))   \geq   -\mathfrak{d}_{s}\phi(s,\xi_s)-\bH(s,\xi_s,\nabla \phi(s,\xi_s),\nabla^2\phi(s,\xi_s),\mathfrak{d}_{\omega}\nabla\phi(s,\xi_s)) -       \bar\eps,
$$
for almost all $s$ satisfying $\tau < s<(\tau+ 4 \bar \delta^2) \wedge T$. Thus, we have 
\begin{align}
\essinf_{\tau < s<\left(\tau+ 4\bar \delta^2\right)\wedge T} 
	E_{\sF_{\tau}}
		\{ - \mathscr L^{\bar\theta(s)}\phi(s,\xi_s)  -f(s,\xi_s,\bar\theta(s)) \}  \geq \bar\eps,\quad \text{a.e..} \label{est-phi}
\end{align}
Set
$\tau^{\bar\theta}=\inf\{ s>\tau: X^{\tau,\xi_{\tau};{\bar\theta}}_s \notin B_{\bar\delta}(\xi_{\tau}) \} \wedge T$, and
 \begin{align*}
\tau^{\bar \theta}_{k,\alpha}=\inf\{s> \tau; \|X^{\tau,\xi_{\tau};\bar\theta}\|_{\tau,s;\alpha} >k   \} \wedge T. 
\end{align*} 
Then $\bP(\tau^{\bar\theta}  >\tau,\,  \tau^{\bar\theta}_{k,\alpha}>\tau)=1$.   Letting $\bar \tau= \hat\tau_k \wedge \tau^{\bar\theta} \wedge \tau^{\bar\theta}_{k,\alpha} \wedge (\tau+ \bar \delta^2) \wedge T$, we have $\bar\tau >\tau$ a.s.. Combining \eqref{est-phi} and the analysis in Remark \ref{rmk-bH} yields that for all $\tau' \in \mathcal T^{\tau}$,
\begin{align}
\int_{\tau}^{\tau'\wedge \bar\tau}\!\!\!
E_{\sF_{\tau}} \left[ 
 \mathscr L^{\bar \theta(s)} \phi (s,X_s^{\tau,\xi_{\tau};\bar\theta})  +f(s,X_s^{\tau,\xi_{\tau};\bar\theta},\bar\theta(s)) 
 	\right] 
	ds
\leq
	\int_{\tau}^{\tau'\wedge \bar\tau} \!\!\!
	E_{\sF_{\tau}} \left[
	L^{\phi}_{\alpha'} |\bar \delta ^{\alpha'} +\bar \delta|    -\bar \eps \right] ds, \label{relation-3}
\end{align}
where $\alpha'\in (0,1)$ is the exponent associated to $\phi\in\mathscr C^2_{\sF}$.

On the other hand,   as $ \underline \mu \in \underline{\mathscr V} $,  there exists $\delta \in (0,\bar \delta)$ such that 
\begin{align*}
\esssup_{
s \in (\tau,(\tau+4\delta^2) \wedge T]	
			} 
	E_{\sF_{\tau}}
		 \left\{ -\mathfrak{d}_{s}\underline \mu(s,\xi_s)-\bH(s,\xi_s,\nabla \underline \mu(s,\xi_s),\nabla^2\underline \mu(s,\xi_s),\mathfrak{d}_{\omega}\nabla\underline \mu(s,\xi_s)) \right\} 
		 \leq 0,
\end{align*}
which particularly implies that
\begin{align*}
\essinf_{
s \in (\tau,(\tau+4\delta^2) \wedge T]	
			} 
	E_{\sF_{\tau}}
		 \left\{ 
		  \mathscr L^{\bar\theta(s)}\underline \mu (s,\xi_s)  +f(s,\xi_s,\bar\theta(s))
		 \right\} 
		 \geq 0,
\end{align*}
Take $\tilde \tau =\bar \tau \wedge (\tau+\delta^2) \wedge T$. It obviously holds that $\tilde \tau > \tau$ a.s.. Moreover, Remark \ref{rmk-bH} implies that
\begin{align}
  \int_{\tau}^{ \tilde \tau} 
  E_{\sF_{\tau}} \left[ 
 \mathscr L^{\bar \theta(s)} \underline \mu (s,X_s^{\tau,\xi_{\tau};\bar \theta})  +f(s,X_s^{\tau,\xi_{\tau};\bar\theta},\bar\theta(s)) \right] ds
 \geq \int_{\tau}^{ \tilde \tau} E_{\sF_{\tau}} \left[
   - L^{\underline \mu}_{\tilde \alpha} |\bar\delta^{\tilde \alpha}+\bar \delta |  \right] ds \quad \text{a.s.,}
 \label{relation-4}
\end{align}
where $\tilde \alpha$ is the exponent associated to $\underline \mu\in \mathscr C^2_{\sF}$.

Combining \eqref{relation-3} and \eqref{relation-4} and applying  Lemma \ref{lem-ito-wentzell} yield that
\begin{align*}
E \left[ (\phi -\underline \mu)(\bar\tau, \,X_{\bar \tau}^{\tau,\xi_{\tau};\bar \theta})  \right]
&= E \left[ (\phi -\underline \mu)(\tau, \,\xi_{\tau}) 
+\int_{\tau}^{\tilde\tau} 
 \mathscr L^{\bar \theta(s)} (\phi-\underline \mu) (s,X_s^{\tau,\xi_{\tau};\bar \theta})\,ds
 \right]
 \\
 &
 =E \left[  
\int_{\tau}^{\tilde\tau} 
 \mathscr L^{\bar \theta(s)} (\phi-\underline \mu) (s,X_s^{\tau,\xi_{\tau};\bar \theta})\,ds
 \right]\\
 &\leq E \left[ \int_{\tau}^{ \tilde \tau} \left(
    L^{\underline \mu}_{\tilde \alpha} |\bar\delta^{\tilde \alpha}+\bar \delta |  + L^{\phi}_{\alpha'}|\bar\delta^{\alpha'}+\bar\delta | -\bar\eps\right)ds \right],
\end{align*}
which is $<0$ when $\bar \delta$ is sufficiently small, resulting in a contradiction with $\phi\in \underline\cG \underline \mu(\tau,\xi_{\tau};\Omega_{\tau},k,\alpha)$. 
\end{proof}

\section{Existence of the viscosity solution}

%

The following properties of the value function $V$ hold in a similar way to \cite[Proposition 3.3]{qiu2017viscosity}.  
\begin{prop}\label{prop-value-func}
Let $(\cA 1)$ hold.
\\[4pt]
(i) For each  $t\in[0,T]$, $\eps\in (0,\infty)$, and $\xi\in L^0(\Omega,\sF_t;\Lambda_t)$, there exists $\bar{\theta}\in\cU$ such that
$$
E\left[ J(t,\xi;\bar{\theta})-V(t,\xi)\right]<\eps.
$$
(ii) For each $({\theta},x_0)\in\cU\times \bR^d$, $\left\{J(t,X_t^{0,x_0;\theta};{\theta})-V(t,X_t^{0,x_0;\theta})\right\}_{t\in[0,T]}$ is a supermartingale, i.e.,  for any $0\leq t\leq \tilde{t}\leq T$,
\begin{align}
V(t,X_t^{0,x_0;\theta})
\leq E_{\sF_t}V(\tilde{t},X_{\tilde{t}}^{0,x_0;{\theta}}) + E_{\sF_t}\int_t^{\tilde{t}}f(s,X_s^{0,x_0;{\theta}},\theta(s))\,ds,\,\,\,\text{a.s..}\label{eq-vfunc-supM}
\end{align}
(iii) For each $({\theta},x_0)\in\cU\times \bR^d$, $\left\{V(s,X_s^{0,x_0;{\theta}})\right\}_{s\in[0,T]}$ is a continuous process.
\\[3pt]
(iv) 
 There exists $L_V>0$ such that for each $(\theta,t)\in\cU \times[0,T]$,
$$
|V(t,x_t)-V(t,y_t)|+|J(t,x_t;\theta)-J(t,y_t;\theta)|\leq L_V\|x_t-y_t\|_0,\,\,\,\text{a.s.},\quad \forall\,x_t,y_t\in\Lambda_t,
$$
with $L_V$ depending only on $T$ and $L$.
\\[3pt]
(v) With probability 1, $V(t,x)$ and $J(t,x;\theta)$ for each $\theta\in\cU$ are continuous  on $[0,T]\times\Lambda$ and 
$$ \sup_{(t,x)\in[0,T]\times\Lambda}   \max\left\{|V(t,x_t)|,\,|J(t,x_t;\theta)| \right\}       \leq L(T+1) \quad\text{a.s..}$$
\end{prop}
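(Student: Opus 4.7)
The strategy is standard dynamic programming combined with the SDE estimates of Lemma \ref{lem-SDE}, and it is efficient to address the five assertions in a slightly reordered fashion, treating (iv) first since the Lipschitz bound underlies everything else. For (iv), I would fix $\theta\in\cU$ and $x_t,y_t\in\Lambda_t$ and write
\begin{align*}
|J(t,x_t;\theta)-J(t,y_t;\theta)| \leq L\,E_{\sF_t}\!\left[\int_t^T \|X^{t,x_t;\theta}_s-X^{t,y_t;\theta}_s\|_0\,ds+\|X^{t,x_t;\theta}_T-X^{t,y_t;\theta}_T\|_0\right],
\end{align*}
then apply Lemma \ref{lem-SDE}(iv) with $p=0$ together with Jensen's inequality to bound the right-hand side by $L_V\|x_t-y_t\|_0$ a.s. with $L_V$ depending only on $L,T$. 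Taking essential infimum over $\theta\in\cU$ transfers the same Lipschitz bound to $V$, and the $L^\infty$-bound $|J(t,x_t;\theta)|\leq L(T+1)$ in (v) is immediate from $(\cA1)(iii)$.

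For (i), I would exploit the lattice (downward-directed) structure of the family $\{J(t,\xi;\theta):\theta\in\cU\}$: given $\theta_1,\theta_2\in\cU$, the set $A=\{J(t,\xi;\theta_1)\leq J(t,\xi;\theta_2)\}$ is $\sF_t$-measurable, hence $\theta=\theta_1\mathbf{1}_A+\theta_2\mathbf{1}_{A^c}\in\cU$ and, by uniqueness of the controlled SDE on each piece, $J(t,\xi;\theta)=\min\{J(t,\xi;\theta_1),J(t,\xi;\theta_2)\}$ a.s. This yields a decreasing sequence $(\theta_n)\subset\cU$ with $J(t,\xi;\theta_n)\downarrow V(t,\xi)$ a.s., and the uniform bound from (v) together with monotone convergence gives (i). Part (ii) then follows from the one-sided dynamic programming inequality: for $0\leq t\leq\tilde t\leq T$, any $\theta'\in\cU$ that agrees with $\theta$ on $[0,\tilde t]$ satisfies, by the flow property (Lemma \ref{lem-SDE}(i)),
\begin{align*}
V(t,X_t^{0,x_0;\theta})\leq J(t,X_t^{0,x_0;\theta};\theta')=E_{\sF_t}\!\left[\int_t^{\tilde t}\! f(s,X_s^{0,x_0;\theta},\theta(s))\,ds+J(\tilde t,X_{\tilde t}^{0,x_0;\theta};\theta')\right].
\end{align*}
An $\eps$-optimal $\theta'$ on $[\tilde t,T]$ supplied by (i), glued to $\theta$ on $[0,\tilde t]$, lets me pass to $V(\tilde t,X_{\tilde t}^{0,x_0;\theta})$ inside the conditional expectation and send $\eps\to 0$; this is precisely \eqref{eq-vfunc-supM}, and equivalently expresses the supermartingale property of $J(\cdot,\cdot;\theta)-V(\cdot,\cdot)$ evaluated along the flow, since $J(s,X_s^{0,x_0;\theta};\theta)+\int_0^s f\,dr$ is already a martingale.

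For (iii), note that $J(s,X_s^{0,x_0;\theta};\theta)+\int_0^s\! f(r,X_r^{0,x_0;\theta},\theta(r))\,dr$ is a bounded martingale on the Brownian filtration and hence continuous by the martingale representation theorem; the Lipschitz estimate (iv) and the time-continuity of $X^{0,x_0;\theta}$ (Lemma \ref{lem-SDE}(iii)) imply that the left-limit and right-limit of $V(\cdot,X^{0,x_0;\theta}_{\cdot})$ agree with $V$ at every $s$, so the bounded supermartingale from (ii) admits no jump and is continuous. Finally, (v) combines the bound established at the outset with joint continuity of $V(t,x)$ on $[0,T]\times\Lambda$: the Lipschitz bound (iv) handles the $x$-variable uniformly in $t$, while continuity in $t$ at fixed $x$ follows from (iii) applied to an arbitrary (e.g.\ constant) control, together with the estimate $\|X^{t,x_t;\theta}_{t+\delta}-x_{t,\delta}\|_0\to 0$ from Lemma \ref{lem-SDE}(iii). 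The main technical obstacle is the gluing step in (ii): one must paste $\eps$-optimal controls at a (possibly stochastic) time $\tilde t$ to convert an integral inequality into the a.s. conditional inequality \eqref{eq-vfunc-supM}, which is exactly where the lattice property of step (i) and the closure of $\cU$ under $\sF_t$-measurable concatenation are essential; apart from this, the argument parallels \cite[Proposition 3.3]{qiu2017viscosity}.
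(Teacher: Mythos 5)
Your overall strategy -- the downward-directed lattice argument for (i), the gluing of $\eps$-optimal controls for (ii), the Lipschitz transfer from $J$ to $V$ for (iv), and the reduction of (iii) to the continuity of the martingales $J(s,X_s^{0,x_0;\theta};\theta)+\int_0^s f\,dr$ -- is exactly the route the paper intends, since it offers no proof beyond the citation of \cite[Proposition 3.3]{qiu2017viscosity}, and parts (i), (ii), (iv) and the $L^\infty$-bound in (v) are correct as you sketch them (modulo the cosmetic point that Lemma \ref{lem-SDE}(iv) is stated for $p>0$, so you should take $p=1$ and apply Jensen rather than invoke ``$p=0$'').

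There is, however, a genuine circularity in your treatment of the two continuity assertions. In (iii) you claim that ``the Lipschitz estimate (iv) and the time-continuity of $X^{0,x_0;\theta}$ imply that the left- and right-limits of $V(\cdot,X^{0,x_0;\theta}_\cdot)$ agree with $V$ at every $s$.'' The spatial Lipschitz bound (iv) only controls $|V(s,X_s)-V(s,(X_{s_0})_{s_0,s-s_0})|$; to conclude you still need $|V(s,(X_{s_0})_{s_0,s-s_0})-V(s_0,X_{s_0})|\to 0$, i.e.\ time-regularity of $V$ along horizontally extended paths, which is precisely the $t$-continuity you later derive in (v) \emph{from} (iii). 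As written, (iii) assumes what (v) is supposed to deliver and vice versa. To break the cycle you must establish time-regularity independently: e.g.\ combine the supermartingale inequality \eqref{eq-vfunc-supM} with an $\eps$-optimal control at $(t,X_t)$ and Lemma \ref{lem-SDE}(iii) to get a two-sided bound of the form $|E[V(t,X_t)]-E[V(\tilde t,X_{\tilde t})]|\leq C|\tilde t-t|^{1/2}$ (giving a c\`adl\`ag modification of the bounded supermartingale $V(s,X_s)+\int_0^s f\,dr$), and then rule out jumps by sandwiching: for controls $\theta_n$ with $E[(J-V)(0,x_0;\theta_n)]<n^{-3}$, the nonnegative supermartingale $(J-V)(\cdot,X^{\theta_n}_\cdot;\theta_n)$ and Doob's maximal inequality give $\sup_s(J-V)(s,X_s^{\theta_n})\to 0$ a.s.\ along a subsequence, so $V$ is a uniform limit of continuous processes along those flows; a short additional argument (or the uniform time-modulus just derived) then transfers continuity to an arbitrary $\theta$. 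Without some such step the argument for (iii), and hence for the joint continuity in (v), is incomplete.
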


Following is the dynamic programming principle, whose proof is the same to \cite[Theorem 3.4]{qiu2017viscosity}, utilizing the separability of path spaces.
\begin{thm}\label{thm-DPP}
Let assumption $(\cA1)$ hold. For any stopping times $\tau,\hat\tau $ with $\tau\leq \hat\tau \leq T$, and any $ \xi\in L^0(\Omega,\sF_{\tau};\Lambda_{\tau})$,
 we have
\begin{align*}
V(\tau,\xi)=\essinf_{\theta\in\cU} E_{\sF_{\tau}}
\left[
\int_{\tau}^{\hat\tau} f\left(s,X_s^{\tau,\xi;\theta},\theta(s)\right)\,ds + V\left(\hat\tau,X^{\tau,\xi;\theta}_{\hat \tau}\right)
\right] \quad a.s.
\end{align*}
\end{thm}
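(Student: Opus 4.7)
The plan is to establish the two inequalities separately. Throughout, we use the flow property (Lemma \ref{lem-SDE}(i)) to identify $X^{\tau,\xi;\theta}_s$ with $X^{0,x_0;\theta}_s$ on suitable events, and the tower property of conditional expectations.

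\textbf{Easy direction ($\geq$).} For an arbitrary $\theta\in\cU$, decompose the cost functional \eqref{eq-cost-funct} at $\hat\tau$:
\begin{align*}
J(\tau,\xi;\theta)
&= E_{\sF_\tau}\!\left[\int_\tau^{\hat\tau}\!\! f\!\left(s,X^{\tau,\xi;\theta}_s,\theta(s)\right)ds + E_{\sF_{\hat\tau}}\!\left(\int_{\hat\tau}^T\!\! f\!\left(s,X^{\tau,\xi;\theta}_s,\theta(s)\right)ds+G(X^{\tau,\xi;\theta}_T)\right)\right].
\end{align*}
By Lemma \ref{lem-SDE}(i), the inner conditional expectation equals $J(\hat\tau,X^{\tau,\xi;\theta}_{\hat\tau};\theta)\geq V(\hat\tau,X^{\tau,\xi;\theta}_{\hat\tau})$ a.s.\ Taking essinf over $\theta$ on the left and bounding the right-hand side from below by the essinf over $\theta$ yields the inequality $V(\tau,\xi)\geq \essinf_{\theta}E_{\sF_\tau}[\,\cdot\,]$.

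\textbf{Hard direction ($\leq$).} For this direction, I would fix $\eps>0$ and use Proposition \ref{prop-value-func}(i) together with the separability of the path space $\Lambda_{\hat\tau}$ (viewed as a subset of $(\Lambda^0_T,\|\cdot\|_0)$ modulo the horizontal extension identification) to construct a single $\eps$-nearly optimal control for the problem started at $(\hat\tau, X^{\tau,\xi;\theta}_{\hat\tau})$. Concretely: cover $\Lambda_{\hat\tau}$ by countably many $\|\cdot\|_0$-balls $B_i$ of small radius, choose representatives $y^i\in B_i$, and for each $i$ pick by Proposition \ref{prop-value-func}(i) a control $\theta^i\in\cU$ with $E[J(\hat\tau,y^i;\theta^i)-V(\hat\tau,y^i)]<\eps/2^i$. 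Define the partition events $A_i=\{X^{\tau,\xi;\theta}_{\hat\tau}\in B_i\}\setminus \bigcup_{j<i}A_j$ and the concatenated control
\[
\tilde\theta(s)=\theta(s)\,1_{[0,\hat\tau)}(s) + \sum_{i\geq 1}\theta^i(s)\,1_{A_i}\,1_{[\hat\tau,T]}(s),
\]
which is easily checked to be $\cU$-valued and $\sF_s$-adapted. Using the $L_V$-Lipschitz regularity of $V$ and $J(\hat\tau,\cdot;\theta^i)$ in $x_{\hat\tau}$ (Proposition \ref{prop-value-func}(iv)) together with Lemma \ref{lem-SDE}(iv), one controls the discrepancy between $J(\hat\tau,X^{\tau,\xi;\theta}_{\hat\tau};\theta^i)$ and $J(\hat\tau,y^i;\theta^i)$ on each $A_i$ by the diameter of $B_i$. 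Then
\begin{align*}
V(\tau,\xi)\leq J(\tau,\xi;\tilde\theta)
&= E_{\sF_\tau}\!\left[\int_\tau^{\hat\tau}\!\! f(s,X^{\tau,\xi;\theta}_s,\theta(s))\,ds + J(\hat\tau,X^{\tau,\xi;\theta}_{\hat\tau};\tilde\theta)\right]\\
&\leq E_{\sF_\tau}\!\left[\int_\tau^{\hat\tau}\!\! f(s,X^{\tau,\xi;\theta}_s,\theta(s))\,ds + V(\hat\tau,X^{\tau,\xi;\theta}_{\hat\tau})\right] + C\eps,
\end{align*}
modulo taking expectation (to absorb the $1/2^i$ sums) and sending radii to zero. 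Letting $\eps\downarrow 0$, then taking essinf over $\theta\in\cU$, delivers the inequality $V(\tau,\xi)\leq \essinf_\theta E_{\sF_\tau}[\,\cdot\,]$.

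\textbf{Main obstacle.} The genuine difficulty is the measurable selection/pasting step in the $\leq$ direction: one must exchange $\essinf$ with the conditional expectation $E_{\sF_\tau}[\,\cdot\,]$ even though the starting point $X^{\tau,\xi;\theta}_{\hat\tau}$ is a random path. The separability of $(\Lambda_{\hat\tau},\|\cdot\|_0)$ (viewed as a subspace of $(\Lambda_T,\|\cdot\|_0)$), the global Lipschitz property of $V$ and $J$ from Proposition \ref{prop-value-func}(iv)--(v), the Lipschitz flow estimate of Lemma \ref{lem-SDE}(iv), and the stability of $\cU$ under countable measurable pasting on $\sF_{\hat\tau}$-measurable partitions are the ingredients that make the pasted control admissible and the error uniformly controllable. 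This is exactly the mechanism invoked in \cite[Theorem 3.4]{qiu2017viscosity}, and the argument transposes to the present setting without essential change because all the regularity ingredients it requires are already furnished by Proposition \ref{prop-value-func}.
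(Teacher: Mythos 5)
Your plan coincides with the paper's own treatment: the paper gives no detailed argument for Theorem \ref{thm-DPP}, stating only that the proof is the same as \cite[Theorem 3.4]{qiu2017viscosity} and hinges on the separability of the path spaces, which is exactly the mechanism (countable covering, $\eps$-optimal controls per ball, pasting, and the Lipschitz estimates of Proposition \ref{prop-value-func} and Lemma \ref{lem-SDE}) you invoke. Both directions of your argument are sound, so this is essentially the paper's proof made explicit.
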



Then we are ready to give the existence of the viscosity solution.

\begin{thm}\label{thm-existence}
 Let $(\cA1)$ hold. The value function $V$ defined by \eqref{eq-value-func} is a viscosity solution of the SPHJB equation \eqref{SHJB}.
\end{thm}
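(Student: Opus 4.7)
\medskip

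\noindent\textbf{Proof plan for Theorem 4.3.} I will show separately that $V$ is a viscosity supersolution and a viscosity subsolution, both by contradiction. The common framework is: pick a test function $\phi$ touching $V$ from the appropriate side at $(\tau,\xi)$; on a set of positive probability where the desired Hamiltonian inequality allegedly fails, select a suitable control via measurable selection; run the state process from $(\tau,\xi)$ under that control; then combine the Itô-Kunita-Wentzell-Krylov formula of Lemma \ref{lem-ito-wentzell} applied to $\phi$ with the dynamic programming principle of Theorem \ref{thm-DPP} to derive a strict inequality that contradicts the touching property. The analysis in fact parallels that of the classical-implies-viscosity Theorem \ref{thm-classical-viscosity}; the only new input is the DPP.

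\medskip

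\noindent\textbf{Supersolution direction.} Fix $(K_0,\alpha_0)$; I will choose $k\geq K_0$ large and $\alpha\in(0,\alpha_0)\cap(0,\frac12)$. Take any $\tau\in\mathcal T^0$, $\Omega_\tau\in\sF_\tau$ with $\mathbb P(\Omega_\tau)>0$, $\xi\in L^0(\Omega_\tau,\sF_\tau;\Lambda_\tau^0)$, and any $\phi\in\overline{\mathcal G}V(\tau,\xi;\Omega_\tau,k,\alpha)$. Suppose \eqref{defn-vis-sup} fails, so there exist $\eps,\tilde\delta>0$ and $\Omega'\subset\Omega_\tau$ of positive probability on which, for all admissible $(s,x)$ close to $(\tau^+,\xi)$ inside the H\"older tube $\Lambda_{\tau,s\wedge T}^{0,k,\alpha;\xi}\cap B_{2\tilde\delta}(\xi)$,
\[
E_{\sF_\tau}\bigl\{-\mathfrak d_s\phi-\mathbb H(s,x,\nabla\phi,\nabla^2\phi,\mathfrak d_\omega\nabla\phi)\bigr\}\leq -\eps.
\]
By the essinf structure of $\mathbb H$, this implies $E_{\sF_\tau}\{\mathscr L^v\phi(s,x)+f(s,x,v)\}\geq\eps$ uniformly in $v\in U$. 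Using the DPP at a time $\tilde\tau\in\mathcal T^\tau_+$ to be specified, there exists for any $\eps'>0$ a control $\tilde\theta\in\cU$ with
\[
V(\tau,\xi)+\eps'\geq E_{\sF_\tau}\Bigl[\int_\tau^{\tilde\tau}f(s,X_s^{\tau,\xi;\tilde\theta},\tilde\theta(s))\,ds+V(\tilde\tau,X_{\tilde\tau}^{\tau,\xi;\tilde\theta})\Bigr].
\]
Choose $\tilde\tau=\hat\tau_k\wedge\tau^{\tilde\theta}\wedge\tau^{\tilde\theta}_{k,\alpha}\wedge(\tau+\delta^2)\wedge T$ with $\delta$ small, where $\tau^{\tilde\theta}$ exits $B_{\tilde\delta}(\xi)$ and $\tau^{\tilde\theta}_{k,\alpha}$ is the H\"older stopping time of Remark \ref{rmk-holder-contin}. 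This keeps $X^{\tau,\xi;\tilde\theta}_{\cdot}$ inside the H\"older tube where the touching property $\phi\leq V$ is ensured by the definition of $\overline{\mathcal G}V$, so $V(\tilde\tau,X_{\tilde\tau}^{\tau,\xi;\tilde\theta})\geq\phi(\tilde\tau,X_{\tilde\tau}^{\tau,\xi;\tilde\theta})$ in $\sF_\tau$-expectation. Applying Lemma \ref{lem-ito-wentzell} to $\phi$ along $X^{\tau,\xi;\tilde\theta}$ and using $\phi(\tau,\xi)=V(\tau,\xi)$ on $\Omega_\tau$, the two displayed estimates combine to
\[
\eps'\geq E_{\sF_\tau}\int_\tau^{\tilde\tau}\bigl(\mathscr L^{\tilde\theta(s)}\phi+f\bigr)(s,X_s^{\tau,\xi;\tilde\theta})\,ds\geq\tfrac{\eps}{2}\,E_{\sF_\tau}[\tilde\tau-\tau],
\]
after absorbing the spatial H\"older error from \eqref{R-Lip-const} into the slack. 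Choosing $\eps'$ smaller than $\tfrac{\eps}{2}\mathbb P$-expectation of $(\tilde\tau-\tau)$ on $\Omega'$ yields a contradiction.

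\medskip

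\noindent\textbf{Subsolution direction.} Take $\phi\in\underline{\mathcal G}V(\tau,\xi;\Omega_\tau,k,\alpha)$, so $\phi\geq V$ inside the same H\"older tube on average. If \eqref{defn-vis-sub} fails, then on some $\Omega'\subset\Omega_\tau$ of positive probability the essliminf is bounded below by some $\bar\eps>0$. This time I use measurable selection to pick $\bar\theta\in\cU$ that realizes the essinf in $\mathbb H$ within $\bar\eps/2$, obtaining
\[
E_{\sF_\tau}\bigl\{\mathscr L^{\bar\theta(s)}\phi(s,x)+f(s,x,\bar\theta(s))\bigr\}\leq -\bar\eps/2
\]
on the relevant neighborhood. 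Run $X^{\tau,\xi;\bar\theta}$ and use the DPP inequality (the $V$-supermartingale estimate \eqref{eq-vfunc-supM} suffices)
\[
V(\tau,\xi)\leq E_{\sF_\tau}\Bigl[\int_\tau^{\bar\tau}f(s,X_s^{\tau,\xi;\bar\theta},\bar\theta(s))\,ds+V(\bar\tau,X_{\bar\tau}^{\tau,\xi;\bar\theta})\Bigr],
\]
where $\bar\tau$ is the analogous stopping time built from $\hat\tau_k$, exit of $B_{\bar\delta}(\xi)$, and $\tau^{\bar\theta}_{k,\alpha}$. Because $X_{\bar\tau}$ stays in the H\"older tube, $V(\bar\tau,X_{\bar\tau})\leq\phi(\bar\tau,X_{\bar\tau})$ in $\sF_\tau$-expectation. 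Substituting Lemma \ref{lem-ito-wentzell} for $\phi$ and using $\phi(\tau,\xi)=V(\tau,\xi)$ gives
\[
0\leq E_{\sF_\tau}\int_\tau^{\bar\tau}\bigl(\mathscr L^{\bar\theta(s)}\phi+f\bigr)\,ds\leq -\tfrac{\bar\eps}{4}\,E_{\sF_\tau}[\bar\tau-\tau]+\text{(H\"older error)},
\]
a strict contradiction once $\bar\delta$ is small using \eqref{R-Lip-const}.

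\medskip

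\noindent\textbf{Main obstacles.} The delicate points are all bookkeeping rather than conceptual. First, the test function $\phi$ only has the stated regularity on each subinterval $[\underline t_j,\underline t_{j+1})$ of its associated partition, so one must work on $\Omega'\cap\{[\tau,\tau+c\delta^2]\subset[\underline t_j,\underline t_{j+1})\}$ and combine the partitions of $\phi$ with the supermartingale/DPP times; this is the same localization used in the proof of Theorem \ref{thm-classical-viscosity}. Second, to invoke the touching inequality $\phi\gtrless V$ one must guarantee that $X^{\tau,\xi;\theta}_s$ lies in $\Lambda_{\tau,s}^{0,k,\alpha;\xi}$ up to $\bar\tau$, which is precisely why the class $\underline{\mathcal G}$, $\overline{\mathcal G}$ are parameterized by $(k,\alpha)$ and why Remark \ref{rmk-holder-contin} provides stopping times $\tau^\theta_{k,\alpha}$; the key quantitative input is that $k$ can be chosen large enough (depending only on $L,T$) that $\mathbb P(\tau^\theta_{k,\alpha}>\tau)=1$ uniformly in $\theta$. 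Third, the measurable selection in the subsolution step must be done consistently with the essential-supremum/infimum framework of the paper, but this is standard. The genuinely nontrivial ingredient, Lemma \ref{lem-ito-wentzell}, is already established; with it the remaining argument is a careful but routine adaptation of the deterministic/Markovian HJB verification.
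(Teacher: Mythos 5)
Your subsolution half is essentially the paper's own argument: negate \eqref{defn-vis-sub} on a set of positive probability, use measurable selection to pick a single control $\bar\theta$ nearly realizing the infimum in $\bH$, stop the state process before it leaves the ball and the H\"older tube, and play the easy half of the DPP (the supermartingale property, Proposition \ref{prop-value-func}(ii)) against Lemma \ref{lem-ito-wentzell} and the touching inequality $\phi\geq V$ inside the tube. That direction is correct, modulo the localization to a subinterval of the partition of $\phi$, which you flag.

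The supersolution half has a genuine gap in the order of quantifiers. You first invoke the DPP at a stopping time ``to be specified'' to produce an $\eps'$-optimal control $\tilde\theta$, and then define $\tilde\tau=\hat\tau_k\wedge\tau^{\tilde\theta}\wedge\tau^{\tilde\theta}_{k,\alpha}\wedge(\tau+\delta^2)\wedge T$ in terms of $\tilde\theta$; Theorem \ref{thm-DPP} is stated for a stopping time fixed in advance, not for a family depending on the control, so this is already circular (it can be repaired via Proposition \ref{prop-value-func}(i)--(ii) and optional sampling, but that is not what you wrote). The more serious problem is the closing step: you conclude $\eps'\geq\tfrac{\eps}{2}E_{\sF_\tau}[\tilde\tau-\tau]$ and propose to choose $\eps'$ smaller than the right-hand side. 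But $\tilde\tau$ depends on $\tilde\theta$, which depends on $\eps'$: nothing prevents the $\eps'$-optimal controls from driving the state out of $B_{\tilde\delta}(\xi)$ or out of the H\"older tube ever faster as $\eps'\downarrow0$, so that $E_{\sF_\tau}[\tilde\tau-\tau]$ could vanish along with $\eps'$. What is needed --- and what your ``key quantitative input'' ($\bP(\tau^\theta_{k,\alpha}>\tau)=1$ uniformly in $\theta$) does not supply, being purely qualitative --- is a quantitative bound on the early-exit probability that is \emph{uniform over all} $\theta\in\cU$ and small relative to the length of the time window. This is exactly the paper's estimate \eqref{est-tau}, $E_{\sF_\tau}[1_{\{\bar\tau^{\theta}<\tau+h\}}]\leq\tilde Ch^{4}$ with $\tilde C$ independent of $\theta$, which requires taking $\alpha<\tfrac18$ (not merely $\alpha<\tfrac12$ as in your setup) so that high moments of the H\"older seminorm from Remark \ref{rmk-holder-contin} yield the $h^4$ rate; the paper then keeps the essinf over $\theta$ inside throughout, divides by $h$, treats the exit events as error terms via Cauchy--Schwarz, and lets $h\to0^{+}$. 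Without this uniform estimate (or an equivalent uniform lower bound on $E_{\sF_\tau}[\tilde\tau-\tau]$), your supersolution argument does not close.
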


\begin{proof}
First, we have $V\in\cS^{\infty}(\Lambda;\bR)$ by Proposition \ref{prop-value-func}. The proof is divided into two steps.

\textbf{Step 1.}
To the contrary, suppose that for each $(k,\alpha)\in \bN^+\times (0,\frac{1}{2})$ with $k\geq K_0$ and $\alpha\leq \alpha_0$ for some existing $(K_0,\alpha_0)\in\bN^+\times (0,1)$,  there exists $\phi\in \underline\cG V(\tau,\xi_{\tau};\Omega_{\tau},k,\alpha)$ with $\tau\in \mathcal T^0$, $\Omega_{\tau}\in\sF_{\tau}$, $\mathbb P(\Omega_{\tau})>0$, and $\xi_{\tau}\in L^0(\Omega_{\tau},\sF_{\tau};\Lambda_{\tau}^0)$,  such that   there exist $\eps, \tilde\delta\in(0,1)$, and $\Omega'\in\sF_{\tau}$ with $\Omega'\subset \Omega_{\tau}$, $\bP(\Omega')>0$, satisfying a.e. on $\Omega'$,
{\small
\begin{align}
\essinf_{
s \in (\tau,(\tau+4\tilde\delta^2 )\wedge T],\,x\in B_{2\tilde\delta}(\xi_{\tau}) \cap \Lambda^{0,k,\alpha;\xi_{\tau}}_{\tau,s\wedge T}	
			} \!\!\!\!\!
	E_{\sF_{\tau}}
		\{ -\mathfrak{d}_{s}\phi(s,x)
		-
		\bH(s,x,\nabla \phi(s,x),\nabla^2\phi(s,x),\mathfrak{d}_{\omega}\nabla\phi(s,x))
		\}  \geq 2\,\eps.
		\label{eq-ex-1}
\end{align}
}

 Denote by $\hat{\tau}_k$ the stopping time associated to $\phi\in \underline\cG V(\tau,\xi_{\tau};\Omega_{\tau},k,\alpha)$. Note that we may think of $\xi$ valued in $\Lambda^0_T$ with $\xi(t)=\xi_{\tau}({t\wedge \tau})$ for all $t\in [0,T]$.  Moreover, associated to $\phi\in\mathscr C_{\sF}^2$, there is a partition: $0=\underline t_0<\underline t_1<\ldots<\underline t_n=T$. W.l.o.g., we assume $\Omega'=\{[\tau,\tau+4\tilde{\delta}^2] \subset [\underline t_j,\underline t_{j+1})\}=\Omega$ for some $j\in\{0,\ldots, n-1\}$. 

By assumption (ii) of $(\cA 1)$ and the measurable selection theorem, there exists $\bar\theta\in \cU$ such that a.s.,
$$
- \mathscr L^{\bar\theta(s)}\phi(s,\xi_s)  -f(s,\xi_s,\bar\theta(s))   \geq   
-\mathfrak{d}_{s}\phi(s,\xi_s)
-
\bH(s,\xi_s,\nabla \phi(s,\xi_s),\nabla^2\phi(s,\xi_s),\mathfrak{d}_{\omega}\nabla\phi(s,\xi_s))
 -       \eps,
$$
for almost all $s$ satisfying $\tau\leq s<(\tau+4\tilde \delta^2) \wedge T$. This together with \eqref{eq-ex-1} implies 
\begin{align}
\essinf_{\tau < s<\left(\tau+4\tilde\delta^2\right)\wedge T} 
	E_{\sF_{\tau}}
		\{ - \mathscr L^{\bar\theta(s)}\phi(s,\xi_s)  -f(s,\xi_s,\bar\theta(s)) \}  \geq \eps,\quad \text{a.s..}
		\label{thm-existence-eps-1}
\end{align}
 
Define
$\tau^{\bar \theta}_{k,\alpha}=\inf\{s> \tau; \|X^{\tau,\xi_{\tau};\bar\theta}\|_{\tau,s;\alpha} >k   \} \wedge T,
$ 
 and for each $\delta \in (0,\tilde\delta)$, set 
$$\tau^{\bar\theta}_{\delta}=\inf\{ s>\tau: X^{\tau,\xi_{\tau};{\bar\theta}}_s \notin B_{\delta}(\xi_{\tau}) \} \wedge T.$$
Then $\bP(\tau^{\bar\theta}_{\delta}   \wedge \tau^{\bar\theta}_{k,\alpha}>\tau)=1$. Putting $\bar \tau= \hat\tau_k \wedge \tau^{\bar\theta}_{\delta} \wedge \tau^{\bar\theta}_{k,\alpha} \wedge (\tau+\tilde \delta^2) \wedge T$, we have $\bP(\bar\tau >\tau)=1$.
Combining \eqref{thm-existence-eps-1} and the analysis in Remark \ref{rmk-bH} yields that 
\begin{align*}
\int_{\tau}^{  \bar\tau}\!\!\!
E_{\sF_{\tau}} \left[ 
 \mathscr L^{\bar \theta(s)} \phi (s,X_s^{\tau,\xi_{\tau};\bar\theta})  +f(s,X_s^{\tau,\xi_{\tau};\bar\theta},\bar\theta(s)) 
 	\right] 
	ds
\leq
	\int_{\tau}^{  \bar\tau} \!\!\!
	E_{\sF_{\tau}} \left[
	L^{\phi}_{\alpha'} | \delta ^{\alpha'} + \delta|    - \eps \right] ds, 
\end{align*}
where $\alpha'\in (0,1)$ is the exponent associated to $\phi\in\mathscr C^2_{\sF}$.


Choose a small $\delta \in (0,\tilde \delta)$ such that  $ 2 L^{\phi}_{\alpha'} | \delta ^{\alpha'} + \delta|  <  \eps$. Using the dynamic programming principle of Theorem \ref{thm-DPP} and the  It\^o-Kunita-Wentzell-Krylov formula of Lemma \ref{lem-ito-wentzell}, we have  
\begin{align}
0&\leq 
	E_{\sF_{\tau}}\left[ 
		  (\phi-V)\left( \bar\tau,X_{ \bar\tau}^{\tau,\xi_{\tau};\bar\theta}\right)  - (\phi-V)(\tau,\xi_{\tau}) 
		  \right] 
\nonumber\\
&\leq
E_{\sF_{\tau}} \left[ 
		    \phi\left( 		\bar\tau ,X_{ \bar\tau}^{\tau,\xi_{\tau};\bar\theta}\right) -\phi(\tau,\xi_{\tau})
		   +\int_{\tau}^{ \bar\tau} 
		   	f(s,X_s^{\tau,\xi_{\tau};\bar\theta},\bar\theta(s))\,ds    \right]
\nonumber\\
&\leq
E_{\sF_{\tau}} \left[ 
		     \int_{\tau}^{ \bar\tau} 
		   	\left( \mathscr L^{\bar \theta(s)} \phi (s,X_s^{\tau,\xi_{\tau};\bar\theta}) 
				+	f(s,X_s^{\tau,\xi_{\tau};\bar\theta},\bar\theta(s)) \right) 
							\,ds    \right]
\nonumber\\
&
\leq
	E_{\sF_{\tau}} \left[ 
	\int_{\tau}^{  \bar\tau} \!\!
	\left( L^{\phi}_{\alpha'} | \delta ^{\alpha'} + \delta|    - \eps\right) \, ds\right]
\nonumber
\\
&<
-\frac{  \eps}{2} E_{\sF_{\tau}} [\bar\tau-\tau],
\end{align}
which gives rise to a contradiction. Hence, $V$ is a viscosity subsolution of SPHJB equation \eqref{SHJB}.\medskip

\textbf{Step 2.}
We prove that $V$ is a viscosity supersolution of \eqref{SHJB}.  To the contrary, assume that for each  $(k,\alpha)\in \bN^+\times (0,\frac{1}{8})$ with $k\geq K_0$ and $\alpha\leq \alpha_0$ for some existing $(K_0,\alpha_0)\in\bN^+\times (0,1)$, there exists $\phi\in \overline\cG V(\tau,\xi_{\tau};\Omega_{\tau},k,\alpha)$ with $\tau\in \mathcal T^0$, $\Omega_{\tau}\in\sF_{\tau}$, $\mathbb P(\Omega_{\tau})>0$, and $\xi_{\tau}\in L^0(\Omega_{\tau},\sF_{\tau};\Lambda_{\tau}^0)$
such that 
there exist $\eps, \tilde\delta \in (0,1)$ and $\Omega'\in\sF_{\tau}$ with $\Omega'\subset \Omega_{\tau}$, $\bP(\Omega')>0$, satisfying a.e. on $\Omega'$,
\begin{align*}
\esssup_{
s \in (\tau,(\tau+4\tilde\delta^2) \wedge T],\,x\in B_{2\tilde\delta}(\xi_{\tau}) \cap \Lambda^{0,k;\xi_{\tau}}_{\tau,s\wedge T}	
			} \!\!\!\!\!
	E_{\sF_{\tau}}
		\{ -\mathfrak{d}_{s}\phi(s,x)
		-
			\bH(s,x,\nabla \phi(s,x),\nabla^2\phi(s,x),\mathfrak{d}_{\omega}\nabla\phi(s,x))
		\}  \leq -\eps.
\end{align*}
 
 Denote by $\hat{\tau}_k$ the stopping time associated to $\phi\in \overline\cG V(\tau,\xi_{\tau};\Omega_{\tau},k,\alpha)$. Again, we think of $\xi$ valued in $\Lambda^0_T$ with $\xi(t)=\xi_{\tau}(t\wedge \tau)$ for all $t\in [0,T]$, and associated to $\phi\in\mathscr C_{\sF}^2$, there is a partition: $0=\underline t_0<\underline t_1<\ldots<\underline t_n=T$. W.l.o.g., we assume $\Omega'=\{[\tau,\tau+4\tilde{\delta}^2] \subset [\underline t_j,\underline t_{j+1})\}=\Omega$ for some $j\in\{0,\ldots, n-1\}$. 

For each $\theta\in\cU$, define
$\tau^{\theta}=\inf\left\{ s>\tau: X^{\tau,\xi_{\tau};{\theta}}_s \notin B_{\tilde\delta}(\xi_{\tau}) \right\}$, and set 
$$
\tau^{ \theta}_{k,\alpha}=\inf\left\{s> \tau; \|X^{\tau,\xi_{\tau};\theta}\|_{\tau,s;\alpha} >k   \right\}, \quad \text{and } \bar\tau^{\theta} = \tau^{\theta} \wedge \tau^{ \theta}_{k,\alpha}.
$$  
Then $\bP( \bar \tau^{\theta}  >\tau)=1$. Recalling $\alpha\in (0,\frac{1}{8})$, we have for each $h\in (0, \frac{\tilde \delta ^2}{4})$,
 \begin{align}
 E_{\sF_{\tau}}\left[1_{\{\bar \tau^{\theta}     <\tau+h\}}\right]
 & 
 \leq E_{\sF_{\tau}}\left[1_{\{\tau^{\theta}     <\tau+h\}}\right] 
 	+
	E_{\sF_{\tau}}\left[1_{\{  \tau^{\theta}_{k,\alpha}   <\tau+h\}}\right] 
 \nonumber\\
 &=
 	E_{\sF_{\tau}}\left[1_{\{ \max_{\tau\leq s\leq \tau +h} |X^{\tau,\xi_{\tau};\theta}(s) -\xi(\tau)| + \sqrt{h}> {\tilde\delta} \}}\right]  
	+
	E_{\sF_{\tau}}\left[
		1_{
			\{ 
			\|X^{\tau,\xi_{\tau};\theta}\|_{\tau,\tau+h;\alpha} > k
			\}
			}
			\right] 
\nonumber\\
&
 \leq 
 	\frac{1}{(\tilde\delta-\sqrt{h})^8}  E_{\sF_{\tau}} \left[ \max_{\tau\leq s\leq \tau +h} |X^{\tau,\xi_{\tau};\theta} (s) -\xi(\tau)|^8\right]
	+
	\frac{1}{k^{16}} E_{\sF_{\tau}}
					\left[
					\|X^{\tau,\xi_{\tau};\theta}\|_{\tau,\tau+h;\alpha}^{16}
					\right]
\nonumber\\
&
\leq 
	\frac{K^8}{(\tilde\delta-\sqrt{h})^8}   (h+\sqrt{h})^8  
	+\frac{h^4}{k^{16}} E_{\sF_{\tau}}   
					\left[
					\|X^{\tau,\xi_{\tau};\theta}\|_{\tau,\tau+\tilde \delta;\alpha+\frac{1}{4}}^{16}
					\right]
	\nonumber \\
&
\leq
	\frac{256 \cdot K^8}{\tilde\delta ^8}   (h+\sqrt{h})^8   + \frac{C h^4}{k^{16}}
	\nonumber\\
&\leq
\tilde{C} h^4
	\quad \text{ a.s..}
	\label{est-tau}
 \end{align}
Here, the constants $K$ and $C$, independent of $(\theta,h,k,\tau)$, are from Lemma \ref{lem-SDE} and  Remark \ref{rmk-holder-contin}, respectively, and thus, the positive constant $\tilde C$ is independent from $\theta,h$, and $\tau$.

In view of Remark \ref{rmk-bH}, Lemma \ref{lem-ito-wentzell}, Theorem \ref{thm-DPP}, and the estimate \eqref{est-tau}, we have for each $h\in(0,\tilde\delta^2/4)$,
{\small
\begin{align*}
0
&=
	\frac{V(\tau,\xi_{\tau})-\phi(\tau,\xi_{\tau})}{h}
\\
&=
	\frac{1}{h} \essinf_{\theta\in\cU} E_{\sF_{\tau}}\left[
	\int_{\tau}^{\hat{\tau}_k \wedge (\tau+h)} 
		f(s,X_s^{\tau,\xi_{\tau};\theta},\theta(s))\,ds
		+V\left(\hat{\tau}_k \wedge (\tau+h), X_{\hat{\tau}_k\wedge (\tau+h)}^{\tau,\xi_{\tau};\theta}\right) -\phi(\tau,\xi_{\tau})
		\right]
\\
&\geq
	\frac{1}{h} \essinf_{\theta\in\cU} E_{\sF_{\tau}}\left[
	\int_{\tau}^{\hat{\tau}_k \wedge (\tau+h)} 
		f(s,X_s^{\tau,\xi_{\tau};\theta},\theta(s))\,ds
		+\phi\left(\hat{\tau}_k \wedge (\tau+h), X_{\hat{\tau}_k\wedge (\tau+h)}^{\tau,\xi_{\tau};\theta}\right) -\phi(\tau,\xi_{\tau})
		\right]
\\
&=
	\frac{1}{h} \essinf_{\theta\in\cU} E_{\sF_{\tau}}\left[
	\int_{\tau}^{\hat{\tau}_k \wedge (\tau+h)} 
		\left( \mathscr L^{\theta(s)}\phi\left(s,X_s^{\tau,\xi_{\tau};\theta}\right)+f(s,X_s^{\tau,\xi_{\tau};\theta},\theta(s))\right)\,ds
		\right]
\\
&\geq
	\frac{1}{h} \essinf_{\theta\in\cU} E_{\sF_{\tau}}\bigg[
		\int_{\tau}^{\bar\tau^{\theta}\wedge (\tau+h)\wedge \hat\tau_k} \bigg(
		\mathscr L^{\theta(s)}\phi\left(s,X_s^{\tau,\xi_{\tau};\theta} \right)+f(s,X_s^{\tau,\xi_{\tau};\theta},\theta(s))
      		  \bigg) \,ds\\
&	\quad\quad\quad
		  -1_{\{\hat\tau_k >\bar\tau^{\theta}\}\cap\{\tau+h>		\bar\tau^{\theta}\}}   \int_{\tau}^{\hat{\tau}_k \wedge (\tau+h)} \left| 
	\mathscr L^{\theta(s)}\phi\left(s,X_s^{\tau,\xi_{\tau};\theta} \right)+f(s,X_s^{\tau,\xi_{\tau};\theta},\theta(s))
      		  \right| \,ds
		\bigg]
\\
&\geq
	\frac{1}{h} \essinf_{\theta\in\cU} E_{\sF_{\tau}}\bigg[
		\int_{\tau}^{(\tau+h) \wedge \hat\tau_k} \bigg(
		\mathscr L^{\theta(s)}\phi\left(s,X_{s\wedge\bar \tau^{\theta}}^{\tau,\xi_{\tau};\theta} \right)+f(s,X_{s\wedge\bar \tau^{\theta}}^{\tau,\xi_{\tau};\theta},\theta(s))
      		  \bigg) \,ds\\
& \quad
-  1_{\{ \hat\tau_k > \bar\tau^{\theta}\}\cap \{\tau+h>\bar \tau^{\theta}\}   }
 \int_{\tau}^{\hat{\tau}_k \wedge (\tau+h)} \left| 
	\mathscr L^{\theta(s)}\phi\left(s,X_{s\wedge \bar \tau^{\theta}}^{\tau,\xi_{\tau};\theta} \right)+f(s,X_{s\wedge \bar \tau^{\theta}}^{\tau,\xi_{\tau};\theta},\theta(s))
      		  \right| \,ds
\\
&	\quad
		  - 1_{\{\hat\tau_k >\bar \tau^{\theta}\}\cap\{\tau+h>\bar \tau^{\theta}\}}  	   
		  \int_{\tau}^{\hat{\tau}_k \wedge (\tau+h)} \left| 
	\mathscr L^{\theta(s)}\phi\left(s,X_s^{\tau,\xi_{\tau};\theta} \right)+f(s,X_s^{\tau,\xi_{\tau};\theta},\theta(s))
      		  \right| \,ds
		\bigg]
\\
& \geq
	\eps\cdot E_{\sF_{\tau}} \left[
	\frac{(\hat\tau_k \wedge(\tau+h))-\tau}{h}		\right] 
	-\frac{1}{h} \esssup_{\theta\in\cU} \left( E_{\sF_{\tau}}\left[ 1_{\{\tau+h>\bar\tau^{\theta}\}}
				\right]\right)^{1/2}
	\\
&	\quad\quad
		  \cdot \left( E_{\sF_{\tau}} \left| \int_{\tau}^{\hat{\tau}_k \wedge (\tau+h)} \left| 
	\mathscr L^{\theta(s)}\phi\left(s,X_{s\wedge \bar \tau^{\theta}}^{\tau,\xi_{\tau};\theta} \right)+f(s,X_{s\wedge \bar \tau^{\theta}}^{\tau,\xi_{\tau};\theta},\theta(s))
      		  \right| \,ds \right|^2
		\right)^{1/2}\\
&
	-\frac{1}{h} \esssup_{\theta\in\cU} \left( E_{\sF_{\tau}}\left[  
		1_{\{\tau+h>\bar\tau^{\theta} \}}		\right]\right)^{1/2}
		\left( E_{\sF_{\tau}} \left| \int_{\tau}^{\hat{\tau}_k \wedge (\tau+h)} \!\! \left| 
	\mathscr L^{\theta(s)}\phi\left(s,X_{s }^{\tau,\xi_{\tau};\theta} \right)+f(s,X_{s }^{\tau,\xi_{\tau};\theta},\theta(s))
      		  \right| \,ds \right|^2
		\right)^{1/2}
\\
& \geq
\eps\cdot E_{\sF_{\tau}} \left[
\frac{(\hat\tau_k \wedge(\tau+h))-\tau}{h}\right]
- 2 h |\tilde C|^{1/2}   E_{\sF_{\tau}} \left[  \left( (\hat{\tau}_k \wedge (\tau+h)-\tau)
\int_{\underline t_j}^{\underline t_{j+1}-\tilde \delta^2} \big|\zeta^{\phi}_s\big|^2\,ds
  \right]  \right)^{1/2}\\
  &\rightarrow \eps,\quad \text{as }h\rightarrow 0^+,
\end{align*}
}
which results in a contradiction. Hence, $V$ is a viscosity supersolution.  
\end{proof}

\section{Uniqueness}

Recall that  $\overline{\mathscr V}$ is the set of all the classical supersolutions of SPHJB equation \eqref{SHJB} and $\underline{\mathscr V}$ the set of all the classical subsolutions, and by Lemma \ref{lem-classical-soltn}, both $\overline{\mathscr V}$ and $\underline{\mathscr V}$ are nonempty.  Set 
\begin{align*}
\overline{u}=\essinf_{\phi\in \overline{\mathscr V}} \phi, \quad 
\underline{u}=\esssup_{\phi\in \underline{\mathscr V}} \phi.
\end{align*}
Letting $V$ be the value function defined in \eqref{eq-value-func}, we shall prove the uniqueness results: (i) a classical solution must be $V$ if it exists; (ii)  $V$ is the unique viscosity solution approximated by classical supersolutions from above and by classical subsolutions from below.
\subsection{Comparison relation $\overline{u} \geq V  \geq \underline{u}$ and uniqueness of classical solution under Assumption $(\cA 1)$}

\begin{thm}\label{thm-comparison}
Letting Assumption $(\cA 1)$ hold, we have $\overline{u} \geq V  \geq \underline{u}$, i.e., for all $t\in [0,T]$ and $\xi_t\in \Lambda_t$, there holds $\overline{u}(t,\xi_t) \geq V(t,\xi_t)  \geq \underline{u}(t,\xi_t)$ a.s..
\end{thm}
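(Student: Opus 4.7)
The plan is a verification argument using the It\^o-Kunita-Wentzell-Krylov formula (Lemma \ref{lem-ito-wentzell}) applied to classical semisolutions along controlled trajectories. The two inequalities $\overline u\geq V$ and $V\geq \underline u$ are treated in parallel.

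For $\overline u\geq V$, fix $\overline\phi\in \overline{\mathscr V}$ with associated partition $0=\underline t_0<\cdots<\underline t_n=T$, a starting point $(t,\xi_t)$, and $\epsilon>0$. The first step is to build a control $\theta^\epsilon\in\cU$ that is $\epsilon$-optimal for the Hamiltonian along its own trajectory, in the sense that
\begin{align*}
\mathscr L^{\theta^\epsilon(s)}\overline\phi(s,X^{t,\xi_t;\theta^\epsilon}_s)+f(s,X^{t,\xi_t;\theta^\epsilon}_s,\theta^\epsilon(s))\leq \epsilon
\end{align*}
for almost every $s\in[t,T]\setminus\{\underline t_1,\dots,\underline t_{n-1}\}$, almost surely. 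I obtain this by applying the essliminf supersolution inequality locally on each subinterval $[\underline t_j\vee t,\underline t_{j+1})$ and using the measurable selection theorem for the infimum defining $\bH$, in the style of Step 2 of Theorem \ref{thm-classical-viscosity} and Step 1 of Theorem \ref{thm-existence}. The $\alpha$-H\"older compact test sets $\Lambda^{0,k,\alpha;\xi_t}_{t,\cdot}$ accommodate the trajectory (so that the essliminf indeed applies along $X^{t,\xi_t;\theta^\epsilon}$) thanks to the stopping times $\tau^{\theta}_{k,\alpha}$ from Remark \ref{rmk-holder-contin}. The second step is to apply Lemma \ref{lem-ito-wentzell} to $\overline\phi$ on each subinterval $[(\underline t_j\vee t)+\delta,\underline t_{j+1}-\delta]$, let $\delta\downarrow 0$, and patch across the partition points using the $\cS^2$-continuity of $\overline\phi$, to obtain
\begin{align*}
\overline\phi(T,X^{t,\xi_t;\theta^\epsilon}_T) = \overline\phi(t,\xi_t) + \int_t^T \mathscr L^{\theta^\epsilon(s)}\overline\phi(s,X^{t,\xi_t;\theta^\epsilon}_s)\,ds + (M_T-M_t)
\end{align*}
for some local martingale $M$. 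Taking $E_{\sF_t}$, inserting the $\epsilon$-optimality inequality, and using $\overline\phi(T,\cdot)\geq G$ give
\begin{align*}
\overline\phi(t,\xi_t) \geq J(t,\xi_t;\theta^\epsilon) - \epsilon(T-t) \geq V(t,\xi_t) - \epsilon(T-t).
\end{align*}
Letting $\epsilon\downarrow 0$ and taking essinf over $\overline{\mathscr V}$ yields $\overline u\geq V$.

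For $V\geq \underline u$, no selection is needed: the subsolution inequality says that $\inf_{v\in U}\{\mathscr L^v\underline\phi+f\}\geq 0$ in the appropriate essliminf sense, so for \emph{any} control $\theta\in\cU$, an analogous localization yields $\mathscr L^{\theta(s)}\underline\phi(s,X^{t,\xi_t;\theta}_s)+f(s,X^{t,\xi_t;\theta}_s,\theta(s))\geq -\epsilon$ almost everywhere. Lemma \ref{lem-ito-wentzell} and taking $E_{\sF_t}$ deliver $\underline\phi(t,\xi_t)\leq J(t,\xi_t;\theta)+\epsilon(T-t)$ for every $\theta$; taking the infimum over $\theta$, letting $\epsilon\downarrow 0$, and then the esssup over $\underline{\mathscr V}$ concludes $\underline u \leq V$.

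The main obstacle is the first step of the supersolution side: the essliminf in the definition of a classical supersolution is taken at each \emph{fixed} path $y\in\Lambda_t$, whereas the reference path $X^{t,\xi_t;\theta^\epsilon}_s$ evolves with $s$. Reconciling these via a measurable selection of $\epsilon$-minimizers of $\bH$, while simultaneously controlling the $\alpha$-H\"older seminorm of the trajectory (to remain inside the test sets $\Lambda^{0,k,\alpha;\xi_t}_{t,\cdot}$) and bridging the partition points $\underline t_j$ (where $\mathfrak{d}_t\overline\phi$ may fail to be integrable), is the delicate part; the remaining ingredients (It\^o formula, the dynamic programming principle of Theorem \ref{thm-DPP}, and $\cS^2$-continuity of $\overline\phi$) are already in hand.
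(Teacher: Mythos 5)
Your overall architecture --- a verification argument via Lemma \ref{lem-ito-wentzell}, with the subsolution direction needing no selection and the supersolution direction needing a near-optimal control --- matches the paper's proof, and your treatment of $V \geq \underline{u}$ is essentially the paper's Step 1. The gap is in the supersolution direction, and it is exactly the point you flag at the end as ``the delicate part'': you assert the existence of $\theta^{\eps}$ with
$\mathscr L^{\theta^{\eps}(s)}\overline\phi(s,X^{t,\xi_t;\theta^{\eps}}_s)+f(s,X^{t,\xi_t;\theta^{\eps}}_s,\theta^{\eps}(s))\leq \eps$ for a.e.\ $s$ along the trajectory, but the supersolution property only controls $\sup_{v}\{-\mathscr L^{v}\overline\phi-f\}$ in an $\text{ess}\liminf$ sense at \emph{frozen} (horizontally extended) paths, and the measurable selection theorem can only be invoked at such points. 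A selection ``along its own trajectory'' is circular (the control determines the trajectory) and, as stated, your intermediate inequality is stronger than what the paper ever establishes: no pointwise-in-$s$ bound along the trajectory is proved anywhere.

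The paper's resolution, which is absent from your proposal, is a two-scale discretization. Fix $\delta$ and split $[t,t+\delta]$ into $N$ subintervals of length $\delta/N$. On each subinterval one selects $\bar\theta$ by $\eps$-minimizing the Hamiltonian at the frozen path $\xi_s$ (the horizontal extension of the state reached at the left endpoint), which is legitimate because \eqref{thm-unique-1-L} holds at that fixed path; this gives the bound \eqref{thm-unique-1} with error $\delta\eps/N$ per subinterval. The discrepancy between the frozen path and the true trajectory over a subinterval is then controlled by the H\"older estimate of Remark \ref{rmk-bH} together with Lemma \ref{lem-SDE}(iii), contributing $C_1(\delta/N)^{1+\alpha/2}$ per subinterval (estimate \eqref{thm-unique-2}). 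Patching the $N$ controls and summing gives a total error $\delta\eps + C_1\delta^{1+\alpha/2}N^{-\alpha/2}$, and one chooses $N$ with $\eps N^{\alpha/2}>C_1$ before letting $\delta\uparrow T-t$ and $\eps\downarrow 0$. Without this device (or an equivalent one) the key inequality in your first step is unjustified. Two minor points: the paper's proof does not use the dynamic programming principle of Theorem \ref{thm-DPP} (only the definition of $V$ as an essential infimum of $J$), and the recursion across the partition points $\underline t_j$ is done interval by interval backwards from $[\underline t_{n-1},T)$ rather than by a $\delta$-excision and limit at each $\underline t_j$.
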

 \begin{proof}
 \textbf{Step 1.}  To prove $V  \geq \underline{u}$, we need only verify the relation $\phi \leq  V $ for each $\phi\in  \underline{\mathscr V}$. Recall that, associated to $\phi\in\mathscr C_{\sF}^2$, there is a partition: $0=\underline t_0<\underline t_1<\ldots<\underline t_n=T$. Also, we have $\phi(T,x) \leq G(x)$ for all $x\in\Lambda_T$ a.s. and  for  each $s\in[0,T)$ with $y_s\in\Lambda_s$, there holds
\begin{align*}
 \text{ess}
 \limsup_{(\tau,x)\rightarrow (s^+,y_s)}
	E_{\sF_{s}} \left\{ 
	 -\mathfrak{d}_{s} \phi(\tau,x)-\bH(\tau,x,\nabla \phi(\tau,x),\nabla^2 \phi(\tau,x), \mathfrak{d}_{\omega}\nabla \phi(\tau,x) ) 
	 \right\} 
	\leq 0\text{, a.s.,}\\
 \text{i.e., }\quad\quad\quad\quad
 \text{ess}
 \limsup_{(\tau,x)\rightarrow (s^+,y_s)}
	E_{\sF_{s}} \left[\sup_{v\in U}\left\{ 
	  -\mathscr L^{v} \phi(\tau,x)-f(\tau,x,v)
	 \right\} \right]
	\leq 0\text{, a.s..}
\end{align*}
Then, for each $\theta\in \cU$, and $\xi_t\in \Lambda_t$ with $\underline t_{n-1}\leq t \leq \tau<T$,  we have by Lemma \ref{lem-ito-wentzell},
\begin{align*}
\phi(t,\xi_t)
&
= E_{\sF_t} \bigg[ 
\phi(\tau, X_{\tau}^{t,\xi_t;\theta})+
		\int_t^{\tau}\left( 
				-\mathscr L^{\theta(s)} \phi(s,X_s^{t,\xi_t;\theta}) 
				\right) ds
		\bigg]
\\
&\leq
 E_{\sF_t} \bigg[ 
\phi(\tau, X_{\tau}^{t,\xi_t;\theta})+
		\int_t^{\tau}
				 f(s,X_s^{t,\xi_t;\theta},\theta(s))
				 ds
		\bigg]
\\
&\rightarrow E_{\sF_t} \bigg[ 
\phi(T, X_{T}^{t,\xi_t;\theta})+
		\int_t^{T}
				 f(s,X_s^{t,\xi_t;\theta},\theta(s))
				 ds
		\bigg] ,\quad\text{as }\tau \rightarrow T.
\end{align*}
Thus, $\phi(t,\xi_t) \leq J(t,\xi_t;\theta)$ as $\phi(T,x_T) \leq G(x_T)$ a.s. for all $x_T\in \Lambda_T$ and this together with the arbitrariness of $\theta$ implies that  $\phi(t,\xi_t) \leq V(t,\xi_t)$ a.s. for  $t\in [\underline t_{n-1},T)$.  Similarly, we may verify $\phi \leq V$ recursively over the time intervals $[\underline t_{n-2},\underline t_{n-1})$, \ldots, $[0,\underline t_{1})$.
 
%
 
 \textbf{Step 2.}  We prove $   \overline{u}\geq V$. For each $\phi\in  \overline{\mathscr V}$, recall that $\phi(T,x) \geq G(x)$ for all $x\in\Lambda_T$ a.s. and  for  each $s\in[0,T)$ with $y_s\in\Lambda_s$, there holds
\begin{align}
 \text{ess}
 \liminf_{(\tau,x)\rightarrow (s^+,y_s)}
	E_{\sF_{s}} \left\{ 
	 -\mathfrak{d}_{s} \phi(\tau,x)-\bH(\tau,x,\nabla \phi(\tau,x),\nabla^2 \phi(\tau,x), \mathfrak{d}_{\omega}\nabla \phi(\tau,x) ) 
	 \right\} 
	\geq 0\text{, a.s.,} 
	\nonumber\\
\text{i.e., }\quad\quad\quad\quad
 \text{ess}
 \liminf_{(\tau,x)\rightarrow (s^+,y_s)}
	E_{\sF_{s}} \left[\sup_{v\in U}\left\{ 
	  -\mathscr L^{v} \phi(\tau,x)-f(\tau,x,v)
	 \right\} \right]
	\geq 0\text{, a.s..} \label{thm-unique-1-L}
\end{align}
Also, associated to $\phi\in\mathscr C_{\sF}^2$, there is a partition: $0=\underline t_0<\underline t_1<\ldots<\underline t_n=T$.

As in Step 1, we first prove the comparison on the interval $[\underline t_{n-1},T)$.
For each $t\in [\underline t_{n-1},T)$ and $\xi_t\in L^0(\Omega,\sF_t;\Lambda_t)$, we may extend $\xi_t$ to be valued in $\Lambda_T$ with $\xi(s)=\xi_t(t\wedge s)$ for all $s\in [0,T]$. Take $\delta\in (0,(T-t)\wedge 1)$. Then by \eqref{thm-unique-1-L} and the measurable selection theorem, for each $\eps\in (0,1)$, there exists $\bar \theta \in\cU$ such that for all $N\in \bN^+\setminus \{1,2\}$, it holds that
{\small
\begin{align}
\quad E_{\sF_{t}} \left[
\int_t^{t+\frac{\delta}{N}}\!\!\!
  \left\{ 
	  -\mathscr L^{\bar\theta(s)} \phi(s,\xi_s)-f(s,\xi_s,\bar\theta(s))
	 \right\} \,ds 
	 \right]
&\geq \int_t^{t+\frac{\delta}{N}} \!\!\!
E_{\sF_{t}} \left[\sup_{v\in U}\left\{ 
	  -\mathscr L^{v} \phi(s,\xi_s)-f(s,\xi_s,v)
	 \right\} -\eps  \right]\,ds
\nonumber\\
 &
	  \geq \frac{-\delta \eps}{N}, \quad \text{a.s..} \label{thm-unique-1}
\end{align}
}
In view of Remark \ref{rmk-bH} and (iii) of Lemma \ref{lem-SDE}, we have 
\begin{align}
&E_{\sF_{t}} \left[
\int_t^{t+\frac{\delta}{N}} \!\!
 \big| ( 
	  \mathscr L^{\bar\theta(s)} \phi(s,\xi_s)+f(s,\xi_s,\bar\theta(s))
	 )
	 -
	 ( 
	  \mathscr L^{\bar\theta(s)} \phi(s,X^{t,\xi_t;\bar\theta}_s)
	  +f(s,X^{t,\xi_t;\bar\theta}_s,\bar\theta(s))
	 )\big|
	  \,ds 
	 \right]
	 \nonumber
\\
&
\leq E_{\sF_{t}} \left[
\int_t^{t+\frac{\delta}{N}} \left| L^{\phi}_{\alpha} \left(\| X_s^{t,\xi_t;\bar\theta} -\xi_s\|_0^{\alpha} + \|X_s^{t,\xi_t;\bar\theta} -\xi_s\|_0\right)
\right|\,ds
\right]
\nonumber
\\
&
\leq C_1 \left(\frac{\delta}{N}\right)^{1+\frac{\alpha}{2}}, \label{thm-unique-2}
\end{align}
where $\alpha\in (0,1)$ is the exponent associated to $\phi \in \mathscr C^2_{\sF}$, the constant $L_{\alpha}^{\phi}$, depending on $\delta$, is associated to the interval $[\underline t_{n-1}, t+\delta]$, and $C_1$ depends only on $L_{\alpha}^{\phi}$ and $K$. 

Starting with the obtained $\left(t+\frac{\delta}{N}, X^{t,\xi_t;\bar\theta}_{t+\frac{\delta}{N}}\right)$, we may conduct the same discussions as in \eqref{thm-unique-1} and \eqref{thm-unique-2} recursively over the time interval $[t+\frac{\delta}{N}, t+\frac{2\delta}{N}]$, \ldots, $[t+\frac{(N-1)\delta}{N},t+\delta]$ for $N-1$ steps. Step by step, the controls $\bar\theta$s may be patched together, and there exists some $\bar \theta\in\cU$ such that 
{\small
\begin{align*}
&\phi(t,\xi_t)
\\
&
= E_{\sF_t} \bigg[ 
\phi(t+\delta,X_{t+\delta}^{t,\xi_t;\bar\theta})+
		\int_t^{t+\delta}\left( 
				-\mathscr L^{\bar\theta(s)} \phi(s,X_s^{t,\xi_t;\bar\theta}) 
				\right) ds
		\bigg]
\\
&
\geq E_{\sF_t} \bigg[ 
\phi(t+\delta,X_{t+\delta}^{t,\xi_t;\bar\theta})+
		\int_t^{t+\delta}\!\!
				f(s,X_s^{t,\xi_t;\bar\theta},\bar\theta(s)) 
				  ds
		\bigg]
-
E_{\sF_{t}} \left[
\int_t^{  {t+\delta}}\!\!
  \left\{ 
	  \mathscr L^{\bar\theta(s)} \phi(s,\xi_s)+f(s,\xi_s,\bar\theta(s))
	 \right\}  ds 
	 \right]
\\
&\quad
	-E_{\sF_{t}} \left[
\int_t^{  {t+\delta}} \!\!
 \big| ( 
	  \mathscr L^{\bar\theta(s)} \phi(s,\xi_s)+f(s,\xi_s,\bar\theta(s))
	 )
	 -
	 ( 
	  \mathscr L^{\bar\theta(s)} \phi(s,X^{t,\xi_t;\bar\theta}_s)
	  +f(s,X^{t,\xi_t;\bar\theta}_s,\bar\theta(s))
	 )\big|
	  \,ds 
	 \right]
 \nonumber\\
 &
 \geq E_{\sF_t} \bigg[ 
\phi(t+\delta,X_{t+\delta}^{t,\xi_t;\bar\theta})+
		\int_t^{t+\delta}\!\!
				f(s,X_s^{t,\xi_t;\bar\theta},\bar\theta(s)) 
				  ds
		\bigg] - \delta \eps - N \cdot C_1 \left(\frac{\delta}{N}\right)^{1+\frac{\alpha}{2}}
 \nonumber\\
 &
 =
 E_{\sF_t} \bigg[ 
\phi(t+\delta,X_{t+\delta}^{t,\xi_t;\bar\theta})+
		\int_t^{t+\delta}\!\!
				f(s,X_s^{t,\xi_t;\bar\theta},\bar\theta(s)) 
				  ds
		\bigg]
		-\delta \eps 
		 - \frac{ C_1\delta^{1+\frac{\alpha}{2}}} {N^{\frac{\alpha}{2}}}, \quad \text{a.s..}
\end{align*}
}
Choosing a big $N$ so that ${\eps \cdot N^{\frac{\alpha}{2}}}>C_1$, we have
{\small
\begin{align*}
\phi(t,\xi_t) 
&\geq J(t,\xi_t;\bar\theta) + E_{\sF_t} \left[ \phi(t+\delta,X_{t+\delta}^{t,\xi_t;\bar\theta}) -\phi(T,X_{T}^{t,\xi_t;\bar\theta})  \right]
-L(T-t-\delta)
- \eps \left\{ \delta^{1+\frac{\alpha}{2}} +\delta\right\}
\\
&\geq
V(t,\xi_t)
-E_{\sF_t} \left[
\sup_{s\in [T-\delta-t,T],x_s\in \Lambda_s} |\phi(s,x_s)-\phi(T,x_T)|
 \right]
 -L(T-t-\delta)
-\eps \left\{  \delta^{1+\frac{\alpha}{2}} +\delta\right\} 
\\
&\rightarrow V(t,\xi_t)-\eps \left\{ (T-t)^{1+\frac{\alpha}{2}} +(T-t)\right\}, \text{a.s., as }\delta \text{ tends to } T-t.
\end{align*}
} 
The arbitrariness of $(t,\xi_t,\eps)$ further implies that $V\leq \phi$ on $[\underline t_{n-1}, T)$, and recursively, the comparison may be verified over the time intervals $[\underline t_{n-2},\underline t_{n-1})$, \ldots, $[0,\underline t_{1})$.
 Finally, we obtain $V\leq \overline u$ over the whole time interval $[0,T]$.
 \end{proof}
 
 A straightforward application of Theorem \ref{thm-comparison} gives the uniqueness of classical solution.
 \begin{cor}\label{unique-classical-sltn}
 Let Assumption $(\cA 1)$ hold. If $u$ is a classical solution of SPHJB equation \eqref{SHJB}, then $u(t,\xi_t)=V(t,\xi_t)$, a.s. for all $t\in [0,T]$ and $\xi_t\in \Lambda_t$.
  \end{cor}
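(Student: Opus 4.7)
The plan is to deduce the corollary directly from Theorem \ref{thm-comparison} by observing that any classical solution sits simultaneously in both extremal classes $\overline{\mathscr V}$ and $\underline{\mathscr V}$. First I would note that, by Definition \ref{classical-soltn}, a classical solution $u$ is by definition both a classical supersolution and a classical subsolution, so $u \in \overline{\mathscr V} \cap \underline{\mathscr V}$.

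Next, from the very definitions
\[
\overline{u} = \essinf_{\phi \in \overline{\mathscr V}} \phi, \qquad \underline{u} = \esssup_{\phi \in \underline{\mathscr V}} \phi,
\]
the membership $u \in \overline{\mathscr V}$ yields $\overline{u} \leq u$ a.s., while $u \in \underline{\mathscr V}$ yields $\underline{u} \geq u$ a.s. Combining these bounds with the comparison chain $\overline{u} \geq V \geq \underline{u}$ supplied by Theorem \ref{thm-comparison}, I would obtain, for every $(t,\xi_t)\in[0,T]\times \Lambda_t$,
\[
u(t,\xi_t) \geq \overline{u}(t,\xi_t) \geq V(t,\xi_t) \geq \underline{u}(t,\xi_t) \geq u(t,\xi_t),\quad \text{a.s.},
\]
which forces equality throughout and in particular $u(t,\xi_t) = V(t,\xi_t)$ a.s.

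There is really no obstacle here, since all the heavy lifting, namely the sandwich inequality $\overline{u} \geq V \geq \underline{u}$, has already been established in Theorem \ref{thm-comparison} using the It\^o–Kunita–Wentzell–Krylov formula of Lemma \ref{lem-ito-wentzell} and the dynamic programming principle of Theorem \ref{thm-DPP}. The only point deserving a brief verbal remark is that the inequalities $\overline{u} \leq u$ and $\underline{u} \geq u$ hold pointwise a.s. for every fixed $(t,\xi_t)$ (not merely in some weaker essential sense), which is immediate from $u$ being one particular element of each class.
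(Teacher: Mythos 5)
Your proposal is correct and is exactly the argument the paper intends: the paper offers no separate proof, describing the corollary as a straightforward application of Theorem \ref{thm-comparison}, and the sandwich $u \geq \overline{u} \geq V \geq \underline{u} \geq u$ (or, equivalently, reading off from Steps 1 and 2 of that theorem's proof that every classical subsolution lies below $V$ and every classical supersolution lies above $V$) is the intended route.
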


\subsection{Uniqueness: $\overline{u} = V =\underline{u}$ for superparabolic cases with state-dependent $\sigma$}

First, we  write the Wiener process $W=(\tilde{W},\bar{W})$, where $\tilde{W}$ and $\bar{W}$ are two mutually independent and respectively, $m_0$- and $m_1$($=m-m_0$)-dimensional  Wiener processes. In what follows, we adopt the decomposition $\sigma=(\tilde{\sigma},\bar{\sigma})$ with $\tilde{\sigma}$ and $\bar{\sigma}$ valued in $\bR^{d\times m_0}$ and $\bR^{d\times m_1}$ respectively associated to $\tilde{W}$ and $ \bar{W}$. 
Denote by $\{\tilde{\sF}_t\}_{t\geq0}$ the natural filtration generated by $\tilde{W}$ and augmented by all the
$\bP$-null sets. 

\bigskip\medskip
   $({\mathcal A} 2)$ (i)  For each $t\in[0,T]$,\, $x_t\in\Lambda_t$, $y_T\in\Lambda_T$,  and ${v}\in U$,  $G(y_T)$ is $\tilde{\sF}_T$-measurable and for the  random variables $g=\beta^i(t,x_t,v),f(t,x_t,v)$, $i=1,\ldots, d$,
$$
  g:~\Omega\rightarrow \bR
\text{ is } \tilde{\sF}_t \text{-measurable.}
$$
 (ii) For $i=1,\ldots,d$, $j=1,\ldots, m$, $\sigma^{ij}:~[0,T]\times\bR^d \times U \rightarrow\bR$ belongs to $C(U; C^2([0,T]; C^3(\bR^d)))$
 and there exists $\kappa\in(0,\infty)$  such that
   \begin{align*}
     \text{(Superparabolicity)}\quad \sum_{i,j=1}^d\sum_{k=1}^{m_1} \bar\sigma^{ik}\bar\sigma^{jk}(t,x,v)\xi^i\xi^j\geq \lambda |\xi|^2\quad \,\,\forall\, (t,x,v,\xi)\in [0,T]\times\bR^d \times U \times\bR^d.
   \end{align*}


 Moreover, we assume the following continuity properties of the coefficients $\beta,\,f$, and $G$.
\begin{enumerate}
\item [$(\cA 3)$] For each $\eps>0$, there exist partition $0=t_0<t_1<\cdots<t_{N-1}<t_N=T$ for some $N>3$ and functions $G^N\in C^{3}(\bR^{ N\times (d+m_0)+d};\bR)$,
$$
(f^N,\beta^N)\in C( U;C^3([0,T]\times\bR^{(m_0+d)\times N +d};\bR)) \times C(U;C^3([0,T]\times\bR^{(m_0+d)\times N +d};\bR^d)),
$$  
such that $G^{\eps}:=\esssup_{x\in\Lambda_T^0} \left|G^N( \tilde W({t_1}),\ldots, \tilde W({t_N}),x(t_0),\ldots,x(t_N))-G(x) \right|$, and 
\begin{align*}
& 
	f^{\eps}(t) :=\esssup_{(x,v)\in\Lambda_t^0\times U}
		\left|f^N( \tilde W({t_1\wedge t}),\ldots, \tilde W({t_N\wedge t}),t,x({t_0\wedge t}),\ldots, x({t_N\wedge t}),v)-f(t,x,v)\right|,
\\
&
 	\beta^{\eps}(t) :=\esssup_{(x,v)\in \Lambda_t^0 \times U}
		\left|\beta^N(\tilde  W({t_1\wedge t}),\ldots, \tilde W({t_N\wedge t}),t,x({t_0\wedge t}),\ldots, x({t_N\wedge t}),v)-\beta(t,x,v)\right|,
\end{align*}
 are $\tilde \sF_t$-adapted with
\begin{align*}
	\left\| G^{\eps}  \right\|_{L^2(\Omega,\sF_T;\bR)} + \left\| f^{\eps}  \right\|_{L^2(\Omega\times[0,T];\bR)}   + \left\| \beta^{\eps}  \right\|_{L^2(\Omega\times[0,T];\bR)}    <\eps,
\end{align*}
and $G^N$, $f^N$ and $\beta^N$ are uniformly Lipschitz-continuous in the space variable $x$ with an identical Lipschitz-constant $L_c$ independent of $N$ and $\eps$. 

\end{enumerate}
 
 \begin{rmk}
In Assumption $(\cA 3)$, the coefficients $\beta, f,$ and $G$ are approximated via regular functions.
Indeed, such approximations may be proved in a similar way to \cite[Lemma 4.2]{qiu2020controlled} if we assume the uniform time-continuity: for $g=\beta^i,f$, $i=1,\ldots,d$, there is a continuously increasing function $\rho: [0,\infty)\rightarrow [0,\infty)$ with $\rho(0)=0$ such that for all $r,t,s\in [0,T]$ with $s\leq t\leq r$, and $x\in \Lambda_T$,
{\small
\begin{align}
|g(r,(x_{t\wedge \cdot})_r)-g(r,(x_{s\wedge \cdot})_r) |+
|g(t,x_t)-g(s,x_s)| + |G((x_{t\wedge \cdot})_T)-G((x_{s\wedge \cdot})_T)| \leq \rho (|t-s|), \text{ a.s..} \label{A3-sufficient}
\end{align}
}
For example, the relation \eqref{A3-sufficient} is obviously satisfied if we take $g(t,x_t)=\int_0^t \phi(x_t(s))\,ds$ for $t\in[0,T]$, and $G(x_T)=\zeta\cdot \int_0^T \psi(x_T(s))\,ds$, where $\zeta\in L^{\infty}(\Omega,\tilde\sF_T;\bR)$ and $\phi$ and $\psi$ are bounded and uniformly continuous functions on $\bR^d$.

 \end{rmk}

\begin{thm}\label{thm-main}   
 Letting $(\cA 1)-(\cA 3)$ hold and $V$ be the value function in \eqref{eq-value-func}, we have $\overline{u} = V =\underline{u}$, i.e., for each $t\in[0,T]$ and $x_t\in \Lambda_t^0$, there holds $\overline{u} (t,x_t)= V(t,x_t) =\underline{u}(t,x_t)$ a.s..
 \end{thm}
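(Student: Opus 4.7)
\medskip

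\noindent\textbf{Proof proposal.} In view of Theorem \ref{thm-comparison} we already have $\overline u\geq V\geq \underline u$, so the task is to show, for each $\eps>0$, the existence of a classical supersolution $\overline\phi^{\,\eps}\in\overline{\mathscr V}$ and a classical subsolution $\underline\phi^{\,\eps}\in\underline{\mathscr V}$ with $\overline\phi^{\,\eps}(t,x_t)\leq V(t,x_t)+\eta(\eps)$ and $\underline\phi^{\,\eps}(t,x_t)\geq V(t,x_t)-\eta(\eps)$ a.s., where $\eta(\eps)\to 0$ as $\eps\to 0$. The plan is to exploit the approximation structure of $(\cA 3)$ together with superparabolicity $(\cA 2)$ to reduce the problem to a piecewise Markovian HJB equation that admits classical smooth solutions by Krylov-type regularity theory.

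\medskip

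\noindent\textbf{Step 1: discretization.} Fix $\eps>0$ and let $0=t_0<\cdots<t_N=T$, $\beta^N,f^N,G^N$ be the approximating objects supplied by $(\cA 3)$. Introduce the associated control problem whose coefficients are $\beta^N,f^N,G^N$ and whose diffusion $\sigma$ is unchanged; call its value function $V^{N}$. A direct BSDE-type estimate (via the Lipschitz constants and the bounds on $G^{\eps},f^{\eps},\beta^{\eps}$) gives
\begin{align*}
\esssup_{(t,x_t)\in[0,T]\times\Lambda^0_t}|V(t,x_t)-V^{N}(t,x_t)|\leq C\bigl(\|G^{\eps}\|_{L^2}+\|f^{\eps}\|_{\cL^2}+\|\beta^{\eps}\|_{\cL^2}\bigr)\leq C\eps,\,\,\text{a.s.,}
\end{align*}
with $C$ independent of $\eps$.

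\medskip

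\noindent\textbf{Step 2: piecewise Markovianization.} On each interval $[t_j,t_{j+1}]$, the coefficients $\beta^N,f^N$ depend only on the frozen quantities $\bigl(\tilde W(t_1),\ldots,\tilde W(t_j),x(t_0),\ldots,x(t_j)\bigr)$ together with $(\tilde W(t),X(t),v,t)$. Augmenting the state by $(X(t),\tilde W(t))$ and treating $\xi_j:=(\tilde W(t_1),\ldots,\tilde W(t_j),x(t_0),\ldots,x(t_j))$ as a parameter, the value function restricted to this interval becomes the value function of a standard Markovian stochastic control problem with smooth bounded coefficients and terminal datum inherited from $V^N(t_{j+1},\cdot)$. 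The superparabolicity assumption $(\cA 2)$ then applies, so a classical theorem of Krylov (fully nonlinear uniformly parabolic Bellman equations with smooth coefficients on $\bR^{d+m_0}$) yields a classical $C^{1,2}_{t,x}$ solution $w^N_j(t,y;\xi_j)$ which is furthermore $C^3$ in $\xi_j$ up to the boundary, with estimates uniform in the parameters. Solving backwards from $t_N=T$ with terminal data $G^N$ and patching at each $t_j$ through conditional expectations over the jump in $\xi_j$ defines a candidate random field $u^N(t,x_t)$.

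\medskip

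\noindent\textbf{Step 3: verification that $u^N\in\mathscr C^2_{\sF}$.} Setting $\underline t_j=t_j$ as the associated partition, we must check on each $[\underline t_j,\underline t_{j+1})$ that $u^N$ admits $(\mathfrak d_t u^N,\mathfrak d_\omega u^N)$ via the It\^o decomposition of Definition \ref{defn-testfunc}, that the vertical derivatives $\nabla u^N,\nabla^2 u^N$ and the mixed derivative $\mathfrak d_\omega\nabla u^N$ exist with the H\"older bounds required. Because $u^N$ on $[t_j,t_{j+1}]$ is built from a classical $C^{1,2}$ function of $(X,\tilde W)$ composed with the state, a direct application of the It\^o-Kunita-Wentzell-Krylov formula of Lemma \ref{lem-ito-wentzell} (or rather its classical Markovian predecessor) produces the required It\^o decomposition and identifies $\mathfrak d_\omega\nabla u^N$ with the classical spatial-stochastic cross derivative, yielding the H\"older estimates with exponent $\alpha\in(0,1/2)$ strictly inside each subinterval. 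By construction $u^N$ satisfies the approximate SPHJB (i.e.\ the SPHJB with $\beta,f,G$ replaced by $\beta^N,f^N,G^N$) in the classical sense on each $[\underline t_j,\underline t_{j+1})$.

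\medskip

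\noindent\textbf{Step 4: perturbation to genuine super/subsolutions.} Using $(\cA 3)$, the Hamiltonian $\bH$ built from $(\beta,f)$ differs from that built from $(\beta^N,f^N)$ by at most $C(\beta^{\eps}(t)+f^{\eps}(t))(1+|\nabla u^N|)$, whose $\cL^2$-norm is $O(\eps)$. Set
\begin{align*}
\overline\phi^{\,\eps}(t,x_t)=u^N(t,x_t)+\eps+\int_t^T M^{\eps}(s)\,ds,\qquad \underline\phi^{\,\eps}(t,x_t)=u^N(t,x_t)-\eps-\int_t^T M^{\eps}(s)\,ds,
\end{align*}
where $M^{\eps}(s)$ is an adapted majorant of the Hamiltonian discrepancy. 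One checks that $\overline\phi^{\,\eps}\in\overline{\mathscr V}$ and $\underline\phi^{\,\eps}\in\underline{\mathscr V}$ in the sense of Definition \ref{classical-soltn}, the $\eps$-term dealing with the terminal discrepancy $G^{\eps}$ and the integral term dominating the Hamiltonian discrepancy along trajectories. Combined with Step 1 this gives $\overline u\leq \overline\phi^{\,\eps}\leq V+C\eps$ and $\underline u\geq\underline\phi^{\,\eps}\geq V-C\eps$ a.s.\ uniformly in $(t,x_t)$, and letting $\eps\to 0$ concludes $\overline u=V=\underline u$.

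\medskip

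\noindent\textbf{Anticipated main obstacle.} The routine estimates on value functions (Step 1) and the final perturbation (Step 4) are standard. The hard core is Step 3: verifying that the patched object $u^N$ actually belongs to the test-function class $\mathscr C^2_{\sF}$, in particular that the cross derivative $\mathfrak d_\omega\nabla u^N$ is well defined and uniformly H\"older on every compact subinterval away from the partition points, and that the classical Markovian Krylov estimates can be made to hold up to the joints $t_j$ uniformly in the parameters $\xi_j\in\bR^{j(d+m_0)}$ (so as to be insensitive to the growth of the augmented dimension as $N\to\infty$). This is where superparabolicity $(\cA 2)$ together with the uniform Lipschitz constant $L_c$ in $(\cA 3)$ is essential, and is the analogue — in the second-order setting and with the H\"older subspaces $\Lambda^{0,k,\alpha}$ replacing the Lipschitz ones of \cite{qiu2020controlled} — of the uniqueness argument in that companion paper.
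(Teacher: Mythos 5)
Your Steps 1--3 follow essentially the same route as the paper: approximate $(\beta,f,G)$ by $(\beta^N,f^N,G^N)$ as in $(\cA 3)$, represent the approximate value function piecewise on each $[t_j,t_{j+1}]$ as a function $\tilde V^{\eps}$ of the augmented Markovian state $(\tilde W(t),X(t))$ with the frozen quantities as parameters, and invoke uniformly parabolic HJB regularity under the superparabolicity in $(\cA 2)$. Two points, however, are genuine gaps rather than merely ``anticipated obstacles.''

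First, in Step 3 you assert that a ``direct application'' of the It\^o--Kunita--Wentzell--Krylov formula identifies $\mathfrak d_{\omega}\nabla u^N$ with the classical cross derivative $D_{\tilde y\tilde x}\tilde V^{\eps}$. This identification is the core of the paper's proof and is not automatic: $\mathfrak d_{\omega}\nabla$ is defined through the quadratic-covariation limit \eqref{def-dwD}, while the regularity available from the parabolic theory is only $C^{1+\frac{\bar\alpha}{2},2+\bar\alpha}$ on compact subintervals (not $C^3$). One must therefore estimate the Riemann-type sums $\sum_{k}\bigl(\nabla_i V^{\eps}(\tau_{k+1},x_{\tau_k,\tau_{k+1}-\tau_k})-\nabla_i V^{\eps}(\tau_k,x_{\tau_k})\bigr)\int_{\tau_k}^{\tau_{k+1}}g\,dW^l$ by expanding separately in the $\tilde y$, $t$, and $\tilde x$ directions and showing that the error terms of size $|\Delta\tilde W(\tau_k)|^{1+\bar\alpha}$, $|\tau_{k+1}-\tau_k|^{\frac{\bar\alpha}{2}}|\Delta\tilde W(\tau_k)|$, and $|\tau_{k+1}-\tau_k|\,|\Delta\tilde W(\tau_k)|^{\bar\alpha-1}$ vanish in $L^1$ as the mesh goes to zero; this occupies the bulk of the paper's argument and cannot be waved through.

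Second, Step 4 fails as written. The correction $\eps+\int_t^T M^{\eps}(s)\,ds$ is not $\sF_t$-measurable when $M^{\eps}$ is a genuinely random adapted process (it integrates the future of $M^{\eps}$), so $\overline\phi^{\,\eps}$ is not adapted and in particular does not belong to $\mathscr C^2_{\sF}$; moreover the terminal discrepancy $G^{\eps}=\esssup_{x}|G^N-G|$ is random and controlled only in $L^2$, so a deterministic shift by $\eps$ does not yield $\overline\phi^{\,\eps}(T,\cdot)\geq G$ pointwise. The paper resolves both difficulties simultaneously by taking as correction the first component $Y^{\eps}$ of the adapted solution of the backward SDE $Y^{\eps}(s)=G^{\eps}+\int_s^T\bigl(f^{\eps}(t)+\tilde L\beta^{\eps}(t)\bigr)\,dt-\int_s^T Z^{\eps}(t)\,d\tilde W(t)$: the martingale part restores adaptedness, $Y^{\eps}(T)=G^{\eps}$ dominates the terminal error pointwise, $Y^{\eps}$ has vanishing vertical derivatives so that $V^{\eps}\pm Y^{\eps}$ remains in $\mathscr C^2_{\sF}$, and $Y^{\eps}$ is $O(\eps)$ by the standard $L^2$ BSDE estimate. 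You should replace your additive correction by this BSDE construction.
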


\begin{proof}
By Theorem \ref{thm-comparison}, we have $\underline u\leq V\leq \overline u$.  Therefore, it is sufficient to construct functions from $\overline{\mathscr V}$ and $\underline{{\mathscr V}}$ to approximate the value function $V$ from above and from below respectively.  

For each $\eps\in(0,1)$, we take $(G^{\eps},\,f^{\eps},\,\beta^{\eps})$ and $(G^N,f^N,\beta^N)$ as in Assumption $(\cA 3)$. By the theory of backward SDEs (see \cite{Hu_2002} for instance), let the pair $(Y^{\eps},Z^{\eps})$   be the unique adapted solution  to backward SDE
$$
Y^{\eps}(s)=G^{\eps}+
	\int_s^T\left(f^{\eps}(t)+\tilde L \beta^{\eps}(t)\right)\,dt
		-\int_s^TZ^{\eps}(t)\,d \tilde W(t),
$$
where the constant $\tilde L$ is to be determined later. For each $s\in[0,T)$ and $x_s\in \Lambda_s$,  let
{\small
\begin{align*}
{V}^{\eps}(s,x_s)
=\essinf_{\theta\in\cU} E_{\sF_s} \bigg[
	\int_s^Tf^N\Big( & \tilde W({t_1\wedge t}),\ldots, \tilde W({t_N\wedge t}),t,X^{s,x_s;\theta,N}(0),X^{s,x_s;\theta,N}({t_1\wedge t}) ,\ldots, 
	\\
	&X^{s,x_s;\theta,N}({t_N\wedge t}), \theta(t)\Big)\,dt\\
		+G^N\Big( \tilde W({t_1}),\ldots,& \tilde W({t_N}),,X^{s,x_s;\theta,N}(0),X^{s,x_s;\theta,N}({t_1}),\ldots,X^{s,x_s;\theta,N}(t_N)\Big)
				\bigg],
\end{align*}
}
where $X^{s,x_s;\theta,N}$ satisfies the SDE
\begin{equation*}
\left\{
\begin{split}
dX(t)&=\beta^N(\tilde W({t_1\wedge t}),\ldots,\tilde W({t_N\wedge t}),t,X (0),X ({t_1\wedge t}) ,\ldots,X ({t_N\wedge t}),\theta(t))\, dt 
\\
&\quad 
+\sigma(t,X(t),\theta(t))\,dW(t)
	,\,\,\,t\in[s,T]; \\
 X(t)&=x_s(t),\quad t\in[0,s].
\end{split}
\right.
\end{equation*}

For each $s\in[t_{N-1},T)$, we have the representation
$$V^{\eps}(s,x_s)=\tilde V^{\eps}( \tilde W(t_1),\ldots, \tilde W({t_{N-1}}),{\tilde W}(s),s,x(0),\ldots,x(t_{N-1}),x(s))$$ 
with
{\small
\begin{align*}
&\tilde V^{\eps}( \tilde W(t_1),\ldots, \tilde W({t_{N-1}}),\tilde y,s,x(0),\ldots,x(t_{N-1}),\tilde x) \\
&=\essinf_{\theta\in\cU} E_{\sF_s,  \tilde W(s)=\tilde y,x_s(s)=\tilde x} \bigg[
	\int_s^Tf^N\left(  \tilde W({t_1}),\ldots, \tilde W({t_{N-1}}), \tilde W({ t}),t,\ldots,x(t_{N-1}),X^{s,x_s;\theta,N}(t),\theta(t)\right)\,dt\\
	&
		+G^N\left( \tilde W({t_1}),\ldots, \tilde W({t_N}),x(0),\ldots,x(t_{N-1}),X^{s,x_s;\theta,N}(T)\right)
		\bigg].
\end{align*}
}
 By the viscosity solution theory of fully nonlinear parabolic PDEs (see \cite[Theorems I.1 and II.1]{lions-1983} for instance),  the function $\tilde V^{\eps}( \tilde W(t_1),\cdots, \tilde W({t_{N-1}}),\tilde y,s,x(0),\cdots,x(t_{N-1}),\tilde x)$ 
satisfies the  following HJB equation over time interval $[t_{N-1},t_N]$:
{\small
\begin{equation*}
  \left\{\begin{array}{l}
  \begin{split}
  -D_tu(\tilde y, t,\tilde x)=\,& 
  \frac{1}{2} \text{tr}\left(D_{\tilde y \tilde y}u(\tilde y, t,\tilde x)\right) 
  + \essinf_{v\in U} \bigg\{   
      \text{tr}\Big(\frac{1}{2}\sigma\sigma'(t,\tilde x,v)D_{\tilde x\tilde x}u(\tilde y, t,\tilde x) + \tilde\sigma(t,\tilde x,v) D_{\tilde y\tilde x}u(\tilde y, t,\tilde x) \Big)\\
	&\quad\quad
		  +(\beta^N)'(  \tilde W({t_1}),\ldots,\tilde  W({t_{N-1}}) ,\tilde y,t,  x(0),\ldots,x(t_{N-1}),\tilde x,v)D_{\tilde{x}}u(\tilde y, t,\tilde x)\\
	  &\quad +f^N(  \tilde W({t_1}),\ldots, \tilde W({t_{N-1}}) ,\tilde y,t,  x(0),\ldots,x(t_{N-1}),\tilde x,v)\bigg\}
                     ;\\
    u(\tilde y, T,\tilde x)=\, &G^N(  \tilde W({t_1}),\ldots, \tilde W({t_{N-1}}) ,\tilde y,  x(0),\ldots,x(t_{N-1}),\tilde x).
        \end{split}
  \end{array}\right.
\end{equation*}
}
Here, we just write $x(t_j)=x_s(t_j)$ for $j=0,\ldots,N-1$, as they are deemed to be fixed for $s\in (t_{N-1},T]$; the classical derivatives are denoted by $D_{\tilde{x}}, D_{\tilde x\tilde x},D_{\tilde y\tilde x}, D_t$, and $D_{\tilde y \tilde y}$. 
The regularity theory of viscosity solutions then
implies that for each $(x(0),\cdots,x(t_{N-1}))\in\bR^{N\times d}$,
\begin{align}
&\tilde V^{\eps}( \tilde W(t_1),\ldots, \tilde W({t_{N-1}}),\cdot,\cdot,x(0),\ldots,x(t_{N-1}),\cdot)
\nonumber\\
&\quad\in 
\cap_{\bar t\in (t_{N-1},T)} L^{\infty}\left(\Omega,\tilde \sF_{t_{N-1}}; C^{1+\frac{\bar\alpha}{2},2+\bar\alpha}([t_{N-1},\bar t\, ]\times\bR^{m_0+d}) \right) 
, \label{est-vis-solution}
\end{align}
for some $\bar\alpha \in (0,1)$, where the \textit{time-space} H\"older space $C^{1+\frac{\bar\alpha}{2},2+\bar\alpha}([t_{N-1},\bar t\,]\times\bR^d)$ is defined as usual. At time $t_{N-1}$, we check that $ V^{\eps}$ is still uniformly Lipschitz-continuous w.r.t. $( \tilde W(t_1)$, $\ldots$, $\tilde W({t_{N-1}}) )$ and $(x(0),\ldots,x(t_{N-1}))$. Then, we may conduct the same discussions on time interval $[t_{N-2},t_{N-1}]$ with the previously obtained $V^{\eps}(t_{N-1},x)$ as the terminal value, and recursively on intervals $[t_{N-3},t_{N-2}]$, $\dots$, $[0,t_{1}]$.

Meanwhile, applying the  It\^o-Kunita formula of \cite[Pages 118-119]{kunita1981some} to $\tilde{V}^{\eps} $ on $[t_{N-1},T]$ yields
 {\small
 \begin{equation}\label{SHJB-N}
  \left\{\begin{array}{l}
  \begin{split}
  &-dV^{\eps}(t,x_t)\\ 
&= \essinf_{v\in U} \bigg\{ \text{tr}\left( \frac{1}{2}\sigma\sigma' D_{\tilde x\tilde x}\tilde{V}^{\eps} + \tilde \sigma D_{\tilde y\tilde x}\tilde{V}^{\eps} \right) ( \tilde W({t_1}),\ldots, \tilde W(t),t,x(0),\ldots,x(t_{N-1}),x(t),v)   \\
&\quad\quad
+
  (\beta^N)'(\tilde  W({t_1}),\ldots,\tilde W({t_{N-1}}), \tilde W(t),t,x(0),\ldots,x(t_{N-1}) ,x(t),v) \nabla V^{\eps}(t,x_t)
  \\
&\quad\quad	
  +f^N( \tilde W({t_1}),\ldots,\tilde W({t_{N-1}}), \tilde W(t),t,x(0),\ldots,x(t_{N-1}),x(t),v)\bigg\}\,dt\\
&\quad\quad
		-D_{\tilde y} \tilde{V}^{\eps}( \tilde W({t_1}),\ldots, \tilde W(t),t,x(0),\ldots,x(t_{N-1}),x(t)) \,d \tilde W(t), 
                     \quad 
                     t\in [t_{N-1},T) \text{ and }x\in\Lambda_t;\\
    &V^{\eps}(T,x_T)=\, G^N(  \tilde W({t_1}),\ldots, \tilde W(T),x(0),\ldots,x(t_{N-1}) ,x(T) ), \quad x_T\in\Lambda_T.
    \end{split}
  \end{array}\right.
\end{equation}
}
It follows similarly on intervals $[t_{N-2},t_{N-1})$, $\dots$, $[0,t_{1})$. 
Subsequently, we show $V^{\eps}  \in \mathscr C^2_{\sF}$ for which in view of the regularity of function $\tilde V^{\eps}$ in  \eqref{est-vis-solution},  we need only verify the existence of $\mathfrak{d}_{\omega} \nabla V^{\eps}$ as required in (b) of (i) in Definition \ref{defn-testfunc}. Indeed, we shall prove 
\begin{align}
\left(\mathfrak{d}_{\omega}\nabla V^{\eps}\right)^{lj}=\left( D_{\tilde y\tilde x}\tilde{V}^{\eps}\right)^{ l j}, \quad \text{and}\quad \left(\mathfrak{d}_{\omega}\nabla V^{\eps}\right)^{ \tilde lj}=0, \text{ on time interval }(t_i,t_{i+1}),
\label{dwD-value}
\end{align}
for $l=1,\ldots, m_0$, $\tilde l=m_0+1,\ldots, m$,  $j=1,\ldots,d$, $i=0,\ldots,N-1$.  

Consider the subinterval $(t_{N-1},T)$. For each $[\tilde t_{N-1}, \tilde t_N]\subset (t_{N-1},T)$, denote by $\Pi=\{\tilde t_{N-1}= \tau_0<\ldots<\tau_{\tilde N}=\tilde t_{N}\}$ a subdivision of $[\tilde t_{N-1},\tilde t_{N}]$ with $|\Pi|=\max_{1\leq k\leq \tilde N}|\tau_k-\tau_{k-1}|$. For each $x\in \Lambda_T$, and $M^l(t)=\int_{t\wedge \tilde t_j}^t g(s)\,dW^l(s)$ for some $g\in L^{\infty}(\Omega\times [0,T];\sP) $, $l=1,\ldots,m$, we verify
{\small
\begin{align*}
&\int_{\tilde t_{N-1}}^{\tilde t_{N}} \left(\mathfrak{d}_{\omega} \nabla V^{\eps} \right)^{li} (t,x_t) g(t)\,dt
\\
&=
\lim_{|\Pi| \rightarrow 0^+} \sum_{k=0}^{\tilde N-1}  \left( \nabla_i V^{\eps}(\tau_{k+1},x_{\tau_k,\tau_{k+1}-\tau_k})- \nabla_i V^{\eps}(\tau_k,x_{\tau_k})\right) \int_{\tau_k}^{\tau_{k+1}} g(s)\,dW^l(s), \text{ in probability,}
\end{align*}
}
for $i=1,\ldots,d$, with $\mathfrak{d}_{\omega}\nabla V^{\eps}$ given by \eqref{dwD-value}. For simplicity, we write $\tilde V^{\eps}(\tilde y,t,\tilde x)$ over the time interval $[t_{N-1},T]$ for $\tilde V^{\eps}( \tilde W(t_1),\ldots, \tilde W({t_{N-1}}),\tilde y,t,x(0),\ldots,x(t_{N-1}),\tilde x)$. The computations are based on the estimate in \eqref{est-vis-solution} and the relation $V^{\eps}(t,x_t)=\tilde V^{\eps}(\tilde W(t), t,x(t))$ for $t\in (t_{N-1},T)$. Denote $\Delta \tilde W(\tau_k)=\tilde W(\tau_{k+1}) - \tilde W(\tau_k)$ for $k=0,\ldots,\tilde N-1$.  First comes the decomposition:
{\small
\begin{align*}
 	\nabla_i V^{\eps}(\tau_{k+1},x_{\tau_k,\tau_{k+1}-\tau_k})- \nabla_i V^{\eps}(\tau_k,x_{\tau_k})
 &
 =
 D_{\tilde x^i} \tilde V^{\eps}(\tilde W (\tau_{k+1}), \tau_{k+1},x(\tau_k) )- D_{\tilde x^i} \tilde V^{\eps}(\tilde W (\tau_{k}), \tau_{k},x(\tau_k) )
 \\
 &
 = D_{\tilde x^i} \tilde V^{\eps}(\tilde W (\tau_{k+1}), \tau_{k+1},x(\tau_k) )- D_{\tilde x^i} \tilde V^{\eps}(\tilde W (\tau_{k}), \tau_{k+1},x(\tau_k) )
 \\
 &\quad\quad
 	+
	D_{\tilde x^i} \tilde V^{\eps}(\tilde W (\tau_{k}), \tau_{k+1},x(\tau_k) )- D_{\tilde x^i} \tilde V^{\eps}(\tilde W (\tau_{k}), 
	\tau_{k},x(\tau_k) )
\\
&:=F_1+F_2.
\end{align*}
}
Applying the integration-by-parts formula gives
{\small
\begin{align*}
F_1
&=
	 \sum_{j=1}^{m_0}\int_0^1 D_{\tilde y^j\tilde x^i}  \tilde V^{\eps} (\tilde W (\tau_{k}) +\lambda \Delta \tilde W (\tau_{k}) , \tau_{k+1},x(\tau_k))\,d\lambda \cdot (\Delta \tilde W (\tau_{k}))^{j}
\\
&= \sum_{j=1}^{m_0}D_{\tilde y^j\tilde x^i} \tilde V^{\eps} (\tilde W (\tau_{k})   , \tau_{k+1},x(\tau_k))  \cdot (\Delta \tilde W (\tau_{k}))^{j}
+ \eps_k^1, \text{ a.s.,}
\end{align*}
}
with 
{\small
\begin{align}
&|\eps_k^1|
\nonumber\\
&=\left|
 \sum_{j=1}^{m_0} \int_0^1 \left( D_{\tilde y^j\tilde x^i}  \tilde V^{\eps} (\tilde W (\tau_{k}) +\lambda \Delta \tilde W (\tau_{k}) , \tau_{k+1},x(\tau_k))
 - D_{\tilde y^j\tilde x^i}  \tilde V^{\eps} (\tilde W (\tau_{k})   , \tau_{k+1},x(\tau_k)) 
   \right)d\lambda  \cdot (\Delta \tilde W (\tau_{k}))^{j}
	\right|
\notag
\\
&\leq C
 \sum_{j=1}^{m_0} \int_0^1 |\lambda \Delta \tilde W (\tau_{k}) |^{\bar \alpha}  d\lambda  \cdot \left| (\Delta \tilde W (\tau_{k}))^{j}\right|
\notag
\\
&\leq C_1 |\Delta \tilde W (\tau_{k})|^{1+\bar\alpha}, \text{a.s.,} 
\label{eps-1}
\end{align}
}
where we have used the H\"older estimate \eqref{est-vis-solution}. For $F_2$, we notice that
{\small
\begin{align}
&  \frac{ \tilde V^{\eps}\Big(\tilde W (\tau_{k}), 
	\tau_{k},x(\tau_k) + e_i {|\Delta \tilde W (\tau_{k})|} \Big)
	- \tilde V^{\eps}\Big(\tilde W (\tau_{k}), 
	\tau_{k},x(\tau_k)  \Big)}{{|\Delta \tilde W (\tau_{k})|}}
	-D_{\tilde x^i} \tilde V^{\eps}(\tilde W (\tau_{k}), 
	\tau_{k},x(\tau_k) )
\nonumber\\
&= 
\int_0^1\left(
D_{\tilde x^i} \tilde V^{\eps}(\tilde W (\tau_{k}), 
	\tau_{k},x(\tau_k) + \lambda e_i {|\Delta \tilde W (\tau_{k})|} )
- D_{\tilde x^i}\tilde V^{\eps}(\tilde W (\tau_{k}), 
	\tau_{k},x(\tau_k) )
\right) d\lambda
\nonumber\\
&=
\int_0^1\int_0^1
 D_{\tilde x^i\tilde x^i} \tilde V^{\eps}(\tilde W (\tau_{k}), 
	\tau_{k},x(\tau_k) + \lambda_1 \lambda e_i {|\Delta \tilde W (\tau_{k})|} ) \,\lambda d\lambda_1
  d\lambda \cdot |\Delta \tilde W (\tau_{k})|,\text{ a.s.,}
  \label{difference-1}
\end{align}
}
and similarly,
{\small
\begin{align}
&  \frac{ \tilde V^{\eps}\Big(\tilde W (\tau_{k}), 
	\tau_{k+1},x(\tau_k) + e_i {|\Delta \tilde W (\tau_{k})|} \Big)
	- \tilde V^{\eps}\Big(\tilde W (\tau_{k}), 
	\tau_{k+1},x(\tau_k)   \Big)}{{|\Delta \tilde W (\tau_{k})|}}
	-D_{\tilde x^i} \tilde V^{\eps}(\tilde W (\tau_{k}), 
	\tau_{k+1},x(\tau_k) )
\nonumber\\
&=
\int_0^1\int_0^1 
 D_{\tilde x^i\tilde x^i}\tilde V^{\eps}(\tilde W (\tau_{k}), 
	\tau_{k+1},x(\tau_k) + \lambda_1 \lambda e_i {|\Delta \tilde W (\tau_{k})|} ) \,\lambda d\lambda_1
  d\lambda \cdot |\Delta \tilde W (\tau_{k})|,\text{ a.s..}
  \label{defference-2}
\end{align}
}
Subtracting \eqref{defference-2} from \eqref{difference-1}  and  applying similarly the integration-by-parts formula yield 
{\small
\begin{align*}
&\left|  D_{\tilde x^i} \tilde V^{\eps}(\tilde W (\tau_{k}), 
	\tau_{k+1},x(\tau_k) )
- D_{\tilde x^i} \tilde V^{\eps}(\tilde W (\tau_{k}), 
	\tau_{k},x(\tau_k) ) \right|
\\
&= \bigg|
	\int_0^1\int_0^1 \Big( D_{\tilde x^i\tilde x^i} \tilde V^{\eps}(\tilde W (\tau_{k}), 
	\tau_{k},x(\tau_k) + \lambda_1 \lambda e_i {|\Delta \tilde W (\tau_{k})|} )
	\\
	&\quad\quad\quad\quad\quad\quad
	-
D_{\tilde x^i\tilde x^i} \tilde V^{\eps}(\tilde W (\tau_{k}), 
	\tau_{k+1},x(\tau_k) + \lambda_1 \lambda e_i {|\Delta \tilde W (\tau_{k})|} ) \Big) 
	\,\lambda d\lambda_1 d\lambda \cdot |\Delta \tilde W (\tau_{k})|
	\\
	&\quad
	+\int_0^1 \Big( D_t  \tilde V^{\eps}(\tilde W (\tau_{k}), 
	\tau_{k}+\lambda (\tau_{k+1}-\tau_k),x(\tau_k) +   e_i {|\Delta \tilde W (\tau_{k})|} )
	\\
	&\quad\quad\quad\quad\quad
	-D_t  \tilde V^{\eps}(\tilde W (\tau_{k}), 
	\tau_{k}+\lambda (\tau_{k+1}-\tau_k),x(\tau_k)   ) \Big)
		\,d\lambda \cdot |\tau_{k+1}-\tau_k| \cdot |\Delta \tilde W (\tau_{k})|^{-1}  \bigg|
\\
&
\leq
	C_2 \left(  |\tau_{k+1}-\tau_k|^{\frac{\bar \alpha}{2}}\cdot |\Delta \tilde W (\tau_{k})| + |\tau_{k+1}-\tau_k|\cdot  |\Delta \tilde W (\tau_{k})|^{\bar \alpha-1}  \right), \text{ a.s.,}
\end{align*}
}
which combined with \eqref{eps-1} yields that 
{\small
\begin{align*}
 &\sum_{k=0}^{\tilde N-1}
 \left( \nabla_i  V^{\eps}(\tau_{k+1},x_{\tau_k,\tau_{k+1}-\tau_k})- \nabla_i V^{\eps}(\tau_k,x_{\tau_k})\right) \int_{\tau_k}^{\tau_{k+1}} g(s)\,dW^l(s)
 \\
 &
 = \sum_{k=0}^{\tilde N-1} \left( \sum_{j=1}^{m_0}   D_{\tilde y^j\tilde x^i} \tilde V^{\eps} (\tilde W (\tau_{k})   , \tau_{k+1},x(\tau_k))  \cdot (\Delta \tilde W (\tau_{k}))^{j}  + \eps_k^2\right)
 \int_{\tau_k}^{\tau_{k+1}} g(s)\,dW^l(s)
 \\
 &
 =
 \sum_{k=0}^{\tilde N-1} \int_{\tau_k}^{\tau_{k+1}}   \sum_{j=1}^{m_0}  D_{\tilde y^j\tilde x^i}  \tilde V^{\eps} (\tilde W (\tau)   , \tau,x(\tau_k)) \, d\tilde W^j(\tau) 
 \int_{\tau_k}^{\tau_{k+1}} g(s)\,dW^l(s)
 \\
 &
 \quad
 + \sum_{k=0}^{\tilde N-1} \int_{\tau_k}^{\tau_{k+1}}   
  \left( 
  	  D_{\tilde y\tilde x^i} \tilde V^{\eps} (\tilde W (\tau)   , \tau,x(\tau_k))  
	-  D_{\tilde y\tilde x^i}  \tilde V^{\eps} (\tilde W (\tau_k)   , \tau_{k+1},x(\tau_k))
 			\right)'\, d\tilde W(\tau) 
 \\
 &\quad\quad \quad\quad
 \cdot \int_{\tau_k}^{\tau_{k+1}} g(s)\,dW^l(s)
 +\sum_{k=0}^{\tilde N-1}   \eps_k^2 
 \int_{\tau_k}^{\tau_{k+1}} g(s)\,dW^l(s),
\end{align*}
}
with 
$$
|\eps_k^2| 
\leq  
 C_3\left( |\Delta \tilde W (\tau_{k})|^{1+\bar \alpha} + |\tau_{k+1}-\tau_k|^{\frac{\bar \alpha}{2}}\cdot |\Delta \tilde W (\tau_{k})| + |\tau_{k+1}-\tau_k|\cdot  |\Delta \tilde W (\tau_{k})|^{\bar \alpha-1}  \right), \text{ a.s.,}
$$
where the constant $C_3$ is independent of the partition $\Pi$. 
As 
{\small
\begin{align*}
&E \sum_{k=0}^{\tilde N-1}\left| \left( |\Delta \tilde W (\tau_{k})|^{1+\bar \alpha} 
+ |\tau_{k+1}-\tau_k|^{\frac{\bar \alpha}{2}}\cdot |\Delta \tilde W (\tau_{k})|
+ |\tau_{k+1}-\tau_k|\cdot  |\Delta \tilde W (\tau_{k})|^{\bar \alpha-1}  \right)
 \int_{\tau_k}^{\tau_{k+1}} g(s)\,dW^l(s)\right |
 \\
 &
 \leq C \sum_{k=0}^{\tilde N-1} \left( E\left[|\Delta \tilde W (\tau_{k})|^{p+p\bar \alpha} 
 + |\tau_{k+1}-\tau_k|^{\frac{p\bar \alpha}{2}}\cdot |\Delta \tilde W (\tau_{k})|^p
 + |\tau_{k+1}-\tau_k|^p\cdot  |\Delta \tilde W (\tau_{k})|^{p\bar \alpha-p} \right] \right)^{\frac{1}{p}}
 \\
 &\quad\quad\quad\quad
 \cdot
 \left\{E \left[ \left( \int_{\tau_k}^{\tau_{k+1}} |g(s)|^2\,ds \right)^{\frac{p}{2(p-1)}} \right] \right\}^{\frac{p-1}{p}}
 \\
 &\leq
 C \sum_{k=0}^{\tilde N-1}  |\tau_{k+1}-\tau_k|^{1+\frac{\bar\alpha}{2}}
 \\
 &\rightarrow 0, \text{ as }|\Pi| \rightarrow 0,
\end{align*}
}
with $1<p<1/(1-\bar\alpha)$ and 
{\small
\begin{align*}
& E\left| \sum_{k=0}^{\tilde N-1} \int_{\tau_k}^{\tau_{k+1}}  
  \left( 
  	  D_{\tilde y\tilde x^i} \tilde V^{\eps} (\tilde W (\tau)   , \tau,x(\tau_k))  
	-   D_{\tilde y\tilde x^i} \tilde V^{\eps} (\tilde W (\tau_k)   , \tau_{k+1},x(\tau_k))
 			\right)'\, d\tilde W(\tau) 
 \int_{\tau_k}^{\tau_{k+1}} g(s)\,dW^l(s) \right|
 \\
 &
 \leq \sum_{k=0}^{\tilde N-1} 
 \left( E\int_{\tau_k}^{\tau_{k+1}}   
  	\left|   D_{\tilde y\tilde x^i}  \tilde V^{\eps} (\tilde W (\tau)   , \tau,x(\tau_k))  
	-   D_{\tilde y\tilde x^i} \tilde V^{\eps} (\tilde W (\tau_k)   , \tau_{k+1},x(\tau_k))
 			\right|^2 \, d\tau
 \cdot 
 E\int_{\tau_k}^{\tau_{k+1}} |g(s)|^2\,ds \right)^{1/2}
 \\
 &
 \leq C
 \sum_{k=0}^{\tilde N-1} 
 \left( E\int_{\tau_k}^{\tau_{k+1}}   
  	\left(\left| \tau_{k+1}-\tau\right|^{\bar \alpha} +  \left|W (\tau) - W (\tau_k) \right|^{2\bar\alpha}
	 \right) \, d\tau \cdot
 \left| \tau_{k+1}-\tau_k\right| \right)^{1/2}
 \\
 &\leq
 C
  \sum_{k=0}^{\tilde N-1}  |\tau_{k+1}-\tau_k|^{1+\frac{\bar\alpha}{2}}
 \\
 &
 \rightarrow 0, \text{ as }|\Pi| \rightarrow 0,
 \end{align*}
}
using standard computations for covariation (see \cite[Section 6 of Chapter II]{pP05} for instance) gives
{\small
\begin{align*}
&\lim_{|\Pi| \rightarrow 0^+} \sum_{k=0}^{\tilde N-1}  \left( \nabla_i V^{\eps}(\tau_{k+1},x_{\tau_k,\tau_{k+1}-\tau_k})- \nabla_i V^{\eps}(\tau_k,x_{\tau_k})\right) \int_{\tau_k}^{\tau_{k+1}} g(s)\,dW^l(s)
\\
&
=\lim_{|\Pi| \rightarrow 0^+}
 \sum_{k=0}^{\tilde N-1} \int_{\tau_k}^{\tau_{k+1}}   \sum_{j=1}^{m_0}   D_{\tilde y^j\tilde x^i} \tilde V^{\eps} (\tilde W (\tau)   , \tau,x(\tau_k)) \, d\tilde W^j(\tau) 
 \int_{\tau_k}^{\tau_{k+1}} g(s)\,dW^l(s)
 \\
 &=\int_{\tilde t_{N-1}}^{\tilde t_{N}} \left(\mathfrak{d}_{\omega} \nabla V^{\eps} \right)^{li} (t,x_t) g(t)\,dt, \text{ in probability.}
\end{align*}
 }
It follows similarly for subintervals $(t_{j-1},t_j)$ for $j=1,\ldots, N-1$ and this yields \eqref{dwD-value} and  $V^{\eps}  \in \mathscr C^2_{\sF}$.

In view of the approximations in Assumption $(\cA 3)$ and with an analogy to  (iv) in Proposition \ref{prop-value-func}, we may select the constant $\tilde{L}>0$ such that for all $t\in [0,T]\setminus\{t_0,\ldots,t_{N-1}\}$ with $x_t\in\Lambda_t$,
$
 |\nabla V^{\eps}(t,x_t)|   \
\leq \tilde L,\,\,\,\text{a.s.}
$, with $\tilde L$ being independent of $\eps$ and $N$. Put
\begin{align*}
\overline{V}^{\eps}(s,x)
=
	V^{\eps}(s,x_s)+Y^{\eps}(s),\quad
\underline{V}^{\eps}(s,x)
=
	V^{\eps}(s,x_s)-Y^{\eps}(s) .
\end{align*}
It remains to verify $\overline V^{\eps}\in \overline {\mathscr V}$ and $\underline V^{\eps}\in \underline {\mathscr V}$ and  find a constant $C_4$ independent of $\eps$ and $N$ s.t.
\begin{align*}
E\left| \overline V^{\eps}(s,x_s)-V(s,x_s)\right|
+E\left| \underline V^{\eps}(s,x_s)-V(s,x_s)\right|
\leq C_4 \cdot \eps, \quad \forall \, s\in [0,T] \text{ with }x_s\in \Lambda_s^0,
\end{align*}
which together with the relation $\overline V^{\eps}\geq V \geq \underline V^{\eps}$ finally yields $\underline u=V= \overline u$. As the remaining part of the proof is analogous to that of \cite[Theorem 5.6]{qiu2017viscosity}, it is omitted.
\end{proof}

In the above proof, we construct the approximations of $V$ with $\overline V^{\eps}$ and $\underline V^{\eps}$ that are just  $\tilde{\sF}_t$-adapted. Thus, for each $t\in[0,T]$ and $x_t\in\Lambda_t^0$, $V(t,x_t)$ is just $\tilde{\sF}_t$-measurable, which indicates that $\left(\mathfrak{d}_{\omega}\nabla V^{\eps}\right)^{l j}$ is actually vanishing for $l=m_0+1,\ldots, m$,  $j=1,\ldots,d$. Hence, under assumptions $(\cA1)-(\cA3)$, the  SPHJB equation may be equivalently written as
\begin{equation*}
  \left\{\begin{array}{l}
  \begin{split}
  -\mathfrak{d}_t u(t,x_t)
 & =\essinf_{v\in U} \bigg\{\text{tr}\left(\frac{1}{2}\sigma \sigma'(t,x_t,v) \nabla^2u(t,x_t)+\tilde\sigma(t,x_t,v) \mathfrak{d}_{\tilde\omega}\nabla u(t,x_t)\right)\\
 &\quad
+
\beta'(t,x_t,v)\nabla^2u(t,x_t) +f(t,x_t,v)
                \bigg\} 
,\, (t,x)\in [0,T)\times  C([0,T];\bR^d);\\
    u(T,x)&= G(x), \quad x\in   C([0,T];\bR^d),
    \end{split}
  \end{array}\right.
\end{equation*}
where we use the notation 
$ \mathfrak{d}_{\tilde\omega}\nabla u = \left(\left(  \mathfrak{d}_{\omega}\nabla u\right)^{kj}\right)_{1\leq k\leq m_0, 1\leq j\leq d}.$

\begin{appendix}

\end{appendix}
\bibliographystyle{siam}

\end{document}